\documentclass[a4paper,10pt, reqno]{amsart}

\usepackage[
  margin=30mm,
  marginparwidth=25mm,     
  marginparsep=2mm,       
  bottom=25mm,
  ]{geometry}

\usepackage{amsfonts}
\usepackage{amssymb,amsthm,mathrsfs,amsmath,amscd,enumerate,enumitem, amscd,color}
\usepackage{xcolor}
\usepackage{tikz}   

\usepackage{lmodern}

\newtheorem{ThA}{Theorem}

\newtheorem{thm}{Theorem}[section]
 \newtheorem{cor}[thm]{Corollary}
 
 \newtheorem{prop}[thm]{Proposition}
\theoremstyle{definition}

 \theoremstyle{remark}
 \newtheorem{rem}[thm]{Remark}

\usepackage{hyperref}

\newcommand{\supp}{\mathop{\mathrm{supp}}}

\setlist[enumerate,1]{label=\textnormal{\bf (\alph*)}}

\numberwithin{equation}{section}
\allowdisplaybreaks

\begin{document}

\title[Higher order logarithms of Bessel operators and an extension problem]{Higher order logarithms of Bessel operators and an extension problem}
\author[J. J. Betancor]{J. J. Betancor}

\author[E. Dalmasso]{E. Dalmasso}

\author[J. C. Fari\~na]{J. C. Fari\~na}

\author[P. Quijano] {P. Quijano}

\address{Jorge J. Betancor, Juan Carlos Fari\~na\newline
	Departamento de An\'alisis Matem\'atico, Universidad de La Laguna,\newline
	Campus de Anchieta, Avda. Astrof\'isico S\'anchez, s/n,\newline
	38721 La Laguna (Sta. Cruz de Tenerife), Spain}
\email{jbetanco@ull.es, 
jcfarina@ull.edu.es}

\address{Estefan\'{\i}a Dalmasso, Pablo Quijano\newline
	Instituto de Matem\'atica Aplicada del Litoral, UNL, CONICET, FIQ\newline
    Colectora Ruta Nac. Nº 168, Paraje El Pozo,\newline
    S3007ABA, Santa Fe, Argentina}
\email{edalmasso@santafe-conicet.gov.ar, pquijano@santafe-conicet.gov.ar}

\subjclass[2020]{47A60, 26A33, 35R11}

\keywords{Logarithm operator, Bessel operator, fractional powers, extension problem}
\renewcommand{\datename}{Last modified:}
\date{\today}


\begin{abstract}
We consider the Bessel operator defined by 
\[
B_\lambda =-\frac{d^2}{dx^2}+\frac{\lambda^2-1/4}{x^2},
\]
on $(0,\infty)$, with $\lambda>-1$. We study the fractional power $B_\lambda^s$, $s\in (-1,1)$, $s\neq 0$, and the logarithm $\log^kB_\lambda $, $k\in \mathbb N$, of $B_\lambda$. We obtain pointwise representations of these operators and asymptotic Taylor expansions of  the operators $B_\lambda^s$ in terms of logarithmic operators $\log^kB_\lambda $. We also obtain $\log B_\lambda$ as the solution of an extension problem.
\end{abstract}

\maketitle

\section{Introduction}
Consider the Bessel operator $B_\lambda$ defined on $(0,\infty)$ as
\[
B_\lambda =-\frac{d^2}{dx^2}+\frac{\lambda^2-1/4}{x^2},\quad \lambda >-1.
\]
It is clear that $B_\lambda=B_{-\lambda}$ when $\lambda\in (-1,1)$. The logarithmic operator of $B_\lambda$  has been studied recently by Fu, Li and Zhang \cite{FLZ}. Our objective is to define higher order logarithms of $B_\lambda$ and to derive the logarithmic operator $\log B_\lambda$ as a solution of an extension problem.

Given $\lambda >-1$, the Hankel transform $h_\lambda$, defined by
\[
h_\lambda (f)(x)=\int_0^\infty \sqrt{xy}J_\lambda (xy)f(y)dy,\quad x\in (0,\infty),
\]
plays the same role in the Bessel setting as the Fourier transform does for the Euclidean Laplacian. Here, $J_\lambda$ is the Bessel function of the first kind and order $\lambda$. 

If $f\in C_c^\infty (0,\infty)$, the space of smooth functions on $(0,\infty)$ with compact support, we have that $h_\lambda (B_\lambda f)(x)=x^2h_\lambda (f)(x)$, $x\in (0,\infty)$. Furthermore, $h_\lambda $ defines an isometric isomorphism in $L^2(0,\infty)$ (see \cite[Lemma 2.7]{BS} for a proof when $\lambda >-1$). 

We extend the definition of the Bessel operator $B_\lambda$ through the Hankel transform as follows
\begin{equation}\label{eq: Bessel extension}
{\mathcal B}_\lambda f=h_\lambda (y^2h_\lambda (f)),\quad f\in D(\mathcal B_\lambda),
\end{equation}
where $D(\mathcal B_\lambda)=\{f\in L^2(0,\infty): \,y^2h_\lambda (f)\in L^2(0,\infty)\}$.

If $f\in C_c^\infty (0,\infty)$ then $\mathcal B_\lambda f=B_\lambda f$. However $\mathcal B_\lambda\neq\mathcal{B}_{-\lambda}$ when $\lambda\in (-1,1)\setminus\{0\}$. In the sequel, we use $B_\lambda$ to denote also $\mathcal B_\lambda$ since we do not expect any confusion to arise. 

According to the definition \eqref{eq: Bessel extension} we can consider a functional calculus for the Bessel operator $B_\lambda$. If $\Phi$ is a complex measurable function on $(0,\infty)$ we define the operator $\Phi(B_\lambda)$ by
\begin{equation}\label{eq: spectral multiplier}
\Phi (B_\lambda)f=h_\lambda (\Phi (x^2)h_\lambda f),\quad f\in D(\Phi (B_\lambda))
\end{equation}
where 
\[
D(\Phi (B_\lambda))=\left\{f\in L^2(0,\infty): \Phi (x^2)h_\lambda f\in L^2(0,\infty)\right\}.
\]
The operators $\Phi (B_\lambda)$ are also called spectral multipliers for the Bessel operator $B_\lambda$. If $\Phi$ is a bounded function, $\Phi (B_\lambda)$ is bounded on $L^2(0,\infty)$. Several authors have established sufficient conditions on  $\Phi$ in order to obtain the boundedness of the operator $\Phi (B_\lambda)$ on $L^p(0,\infty)$, weighted $L^p(0,\infty)$ and Lorentz spaces (see, for instance, \cite{BMR, CS, GaS, GT, Gs, Gu, HNS, Ka} and \cite{ST}).

In this paper we study some operators that can be seen as spectral multipliers associated with the Bessel operators $B_\lambda$. We consider first fractional powers $B_\lambda^s$, $s\in \mathbb R\setminus\{0\}$, defined by
\[
B_\lambda^sf=h_\lambda (x^{2s}h_\lambda (f)),\quad f\in D(B_\lambda^s).
\]

We define, for every $t>0$, the operator $W_t^\lambda$ by
\[
W_t^\lambda (f)=h_\lambda (e^{-ty^2}h_\lambda (f)),\quad f\in L^2(0,\infty).
\]
The family $\{W_t^\lambda \}_{t>0}$ is a semigroup of operators in $L^2(0,\infty)$ whose infinitesimal generator is $-B_\lambda$. We can see that, for every $t>0$, $W_t^\lambda$  is the integral operator
\begin{equation}\label{eq: integral heat Bessel}
    W_t^\lambda (f)(x)=\int_0^\infty W_t^\lambda (x,y)f(y)dy,\quad f\in L^2(0,\infty),
\end{equation}
where 
\[
W_t^\lambda (x,y)=\frac{\sqrt{xy}}{2t}I_\lambda \left(\frac{xy}{2t}\right)e^{-\frac{x^2+y^2}{4t}},\quad x,y \in (0,\infty).
\]
Here, $I_\lambda$ represents the modified Bessel function of the first kind and order $\lambda$.

For every $t>0$ the integral in \eqref{eq: integral heat Bessel} defines a bounded operator in $L^p(0,\infty)$, $1\leq p\leq \infty$. Thus, $\{W_t^\lambda \}_{t>0}$ is a semigroup of operators in $L^p(0,\infty)$, $1\leq p\leq \infty$.

We now recall some properties of the modified Bessel functions $I_\lambda$, $\lambda >-1$, that will be useful in the sequel.

For every $\lambda >-1$, the modified Bessel function $I_\lambda$ of the first kind and order $\lambda$ is defined by
\begin{equation}\label{eq: def Ilambda}
I_\lambda (z)=\Big(\frac{z}{2}\Big)^\lambda \sum_{k=0}^\infty \frac{(z/2)^{2k}}{k!\Gamma (\lambda +k+1)},\quad z>0.
\end{equation}
From \eqref{eq: def Ilambda} we deduce that
\begin{equation}\label{eq: Ilambda at zero}
I_\lambda (z)\sim \frac{z^\lambda }{2^\lambda\Gamma (\lambda +1)},\quad \mbox{ as }z\rightarrow 0^+.
\end{equation}
According to \cite[(5.11.10)]{Le} we have that
\begin{equation}\label{eq: Ilambda at infty}
I_\lambda (z)=\frac{e^z}{\sqrt{2\pi z}}\left(\sum_{k=0}^n(-1)^k(\lambda, k)(2z)^{-k}+O(z^{-n-1})\right),\quad z>0.
\end{equation}
Here, $(\lambda ,0)=1$ and, for every $k\in \mathbb N$,
\[
(\lambda,k)=\frac{(4\lambda^2-1)(4\lambda^2-3^2)\cdots (4\lambda^2-(2k-1)^2)}{2^{2k}k!}.
\]
Taking derivatives in \eqref{eq: def Ilambda} leads to
\begin{equation}\label{eq: Ilambda derivative}
\frac{d}{dz}(z^{-\lambda }I_\lambda (z))=z^{-\lambda }I_{\lambda +1}(z),\quad z>0.
\end{equation}

Let $\lambda>-\frac{1}{2}$. According to the Feynman-Kac formula we have that
\begin{equation}\label{eq: heat Bessel FeynmanKac}
0\leq W_t^\lambda (x,y)\leq W_t(x-y)=\frac{e^{-\frac{|x-y|^2}{4t}}}{2\sqrt{\pi t}},\quad x,y,t\in (0,\infty),
\end{equation}
where $W_t(z)$, $z\in \mathbb R$ and $t>0$, denotes the Euclidean heat kernel in $\mathbb R$.

From \eqref{eq: Ilambda at zero} and \eqref{eq: Ilambda at infty} we deduce that
\begin{equation}\label{eq: heat Bessel t>xy}
0\leq W_t^\lambda(x,y) \leq Ce^{-\frac{x^2+y^2}{4t}}\frac{(xy)^{\lambda+1/2}}{t^{\lambda +1}},\quad x,y,t\in (0,\infty),\,t>xy,
\end{equation}
and
\begin{equation}\label{eq: heat Bessel comparison t<xy}
|W_t^\lambda (x,y)-W_t(x-y)|\leq C\sqrt{t}\frac{e^{-c\frac{|x-y|^2}{t}}}{xy}, \quad x,y,t\in (0,\infty),\,t<xy.
\end{equation}

In our first result we establish pointwise representations for the operators $B_\lambda^s$, $s\in (-1,1)$, $s\neq 0$. In the sequel, for every $x,r\in (0,\infty)$, we will denote $B(x,r)=(0,\infty)\cap (x-r,x+r)$.

\begin{thm}\label{thm: Th1.1}
Let $f\in C_c^\infty (0,\infty)$, $\lambda >-\tfrac12$ and $s\in (0,1)$. Then,
\begin{enumerate}
\item \label{itm: Th1.1 a} $f\in D(B_\lambda^s)$ and
\begin{align*}
    B_\lambda^sf(x)&=-\frac{s}{\Gamma (1-s)}\left[\int_0^\infty \left(f(y)-\mathcal X_{B(x,1)}(y)f(x)\right)\int_0^\infty \frac{W_t^\lambda (x,y)}{t^{1+s}}dtdy\right.\\
    &\quad +f(x)\Bigg(-\int_{(0,\infty)\setminus B(x,1)}\int_0^1\frac{W_t^\lambda (x,y)}{t^{1+s}}dtdy+\int_{B(x,1)}\int_1^\infty \frac{W_t^\lambda (x,y)}{t^{1+s}}dtdy\\
    &\quad \left.+\int_0^\infty \int_0^1\frac{W_t^\lambda (x,y)-W_t(x-y)}{t^{1+s}}dtdy\Bigg)\right]+\frac{f(x)}{\Gamma (1-s)},\quad \mbox{ a.e. }x\in (0,\infty). 
\end{align*}
\item \label{itm: Th1.1 b} when $0<s<\frac{\lambda}{2}+\frac{1}{2}$, $f\in D(B_\lambda^{-s})$ and
\begin{align*}
    B_\lambda^{-s}f(x)&=\frac{s}{\Gamma (1+s)}\left[\int_0^\infty \left(f(y)-\mathcal X_{B(x,1)}(y)f(x)\right)\int_0^\infty W_t^\lambda (x,y)t^{s-1}dtdy\right.\\
    &\quad +f(x)\Bigg(-\int_{(0,\infty)\setminus B(x,1)}\int_0^1W_t^\lambda (x,y)t^{s-1}dtdy+\int_{B(x,1)}\int_1^\infty W_t^\lambda (x,y)t^{s-1}dtdy\\
    &\quad +\left.\int_0^\infty \int_0^1(W_t^\lambda (x,y)-W_t(x-y))t^{s-1}dtdy\Bigg)\right]+\frac{f(x)}{\Gamma (1+s)},\quad \mbox{ a.e. }x\in (0,\infty).    
\end{align*}
\end{enumerate}
Furthermore, all the integrals are absolutely convergent for every $x\in (0,\infty)$.
\end{thm}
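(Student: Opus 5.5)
The plan is to start from the subordination (Balakrishnan) formulas at the level of the Hankel transform and then move to the heat kernel. For $s\in(0,1)$ and $z>0$ we have
\[
z^{s}=\frac{s}{\Gamma(1-s)}\int_0^\infty (1-e^{-tz})\frac{dt}{t^{1+s}},\qquad z^{-s}=\frac{1}{\Gamma(s)}\int_0^\infty e^{-tz}t^{s-1}dt .
\]
In (b) the factor $s/\Gamma(1+s)=1/\Gamma(s)$, so the two formulas agree. Take $f\in C_c^\infty(0,\infty)$. Then $h_\lambda f$ is bounded: one has $|\sqrt{z}J_\lambda(z)|\le C z^{\lambda+1/2}$ near $0$ and it is bounded at infinity, with $\lambda>-1/2$. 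Moreover $y^2h_\lambda f=h_\lambda(B_\lambda f)\in L^2$.

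For (a), $y^{2s}h_\lambda f\in L^2$ follows from $y^{2s}\le 1+y^2$, so $f\in D(B_\lambda^s)$. For (b), $y^{-2s}h_\lambda f$ is square integrable near $0$ once we use $|h_\lambda f(y)|\le Cy^{\lambda+1/2}$ for small $y$. This requires $4s<2\lambda+2$, which is exactly $s<\frac{\lambda}{2}+\frac12$.

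Inserting the subordination formula and using Fubini in $L^2$ (a weak formulation against $g\in L^2$), we get
\[
B^s_\lambda f=\frac{s}{\Gamma(1-s)}\int_0^\infty (f-W_t^\lambda f)\frac{dt}{t^{1+s}},\qquad B^{-s}_\lambda f=\frac{1}{\Gamma(s)}\int_0^\infty W^\lambda_t f\,t^{s-1}dt .
\]
These are Bochner integrals in $L^2$. Convergence near $t=0$ in (a) comes from $\|f-W_t^\lambda f\|_2\le t\|B_\lambda f\|_2$. At $t=\infty$ we use boundedness in (a), and the spectral estimate above in (b). Passing to a.e. $x$ is justified once the pointwise integrals are shown to be absolutely convergent.

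The remaining work is algebraic rearrangement plus absolute convergence. In (a) we write
\[
f(x)-W_t^\lambda f(x)=\int_0^\infty\bigl(f(x)\mathcal X_{B(x,1)}(y)-f(y)\bigr)W^\lambda_t(x,y)\,dy+f(x)\Bigl(1-\int_{B(x,1)}W_t^\lambda(x,y)\,dy\Bigr).
\]
We then split $1=\int_{\mathbb R}W_t(x-y)\,dy$ and handle $t\in(0,1)$ and $t\in(1,\infty)$ separately. For $t<1$, the term $1-\int_{B(x,1)}W^\lambda_t$ becomes
\[
\int_{(0,\infty)\setminus B(x,1)}W^\lambda_t\,dy-\int_0^\infty\bigl(W^\lambda_t-W_t\bigr)dy+\int_{\mathbb R\setminus(0,\infty)}W_t\,dy .
\]
The leftover Euclidean pieces over $t<1$ and $t>1$ integrate explicitly against $t^{-1-s}$. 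Since $\frac{s}{\Gamma(1-s)}\int_1^\infty t^{-1-s}dt=\frac1{\Gamma(1-s)}$, they produce the constant term $f(x)/\Gamma(1-s)$. The negative-half-line Euclidean piece must cancel, or combine, with the other explicit terms. I would carry out this bookkeeping carefully and check it against the stated formula. Part (b) is identical, with kernel $t^{s-1}$ and constant $\frac{s}{\Gamma(1+s)}\int_0^1 t^{s-1}dt=\frac1{\Gamma(1+s)}$.

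The main obstacle is absolute convergence of each double integral for every $x$. I would prove it term by term as follows.

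\textbf{First term.} Near $y=x$, $|f(y)-f(x)|\le C|x-y|$. Combined with \eqref{eq: heat Bessel FeynmanKac}, this gives $\int_0^\infty W_t(x-y)t^{-1-s}dt\approx|x-y|^{-1-2s}$, which is integrable against $|x-y|$ since $s<1$. Away from $x$, $f$ has compact support, so $y$ stays in a compact set. There we use \eqref{eq: heat Bessel t>xy} for large $t$; it gives decay $t^{-\lambda-1-s}$, or $t^{-\lambda-1+s}$ in (b), which is integrable at infinity because $\lambda>-1/2$ and, in (b), $s<\frac{\lambda+1}{2}$.

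\textbf{Terms with $t<1$, $y\notin B(x,1)$.} These are bounded via \eqref{eq: heat Bessel FeynmanKac} by $e^{-c/t}$ times Gaussian decay in $y$.

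\textbf{Terms with $t>1$, $y\in B(x,1)$.} These follow from \eqref{eq: heat Bessel t>xy} once $t>xy$, and are trivial on the bounded range otherwise.

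\textbf{Difference term $W^\lambda_t-W_t$, $t<1$.} For $t<xy$ we use \eqref{eq: heat Bessel comparison t<xy}: it is bounded by $\sqrt t\,e^{-c|x-y|^2/t}/(xy)$, whose $y$-integral is $\lesssim t/x^2$ locally and Gaussian small far away, so $t^{-1-s}\cdot t$ is integrable. For $t>xy$, $y<t/x<1/x$, and both kernels are bounded by $t^{-1/2}e^{-cx^2/t}$ times a factor integrable in $y$, giving rapid decay as $t\to0$ for fixed $x$.
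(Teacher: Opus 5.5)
Your route is the paper's own. You identify $B_\lambda^{\pm s}f$ with the subordinated Bochner integrals (Propositions~\ref{prop: Prop2.1} and~\ref{prop: Prop2.2}), split the $y$-integral with $B(x,1)$, compare $W^\lambda_t$ with $W_t$ for $t<1$, and bound each piece with the kernel estimates. The bookkeeping step you deferred, however, cannot be closed.

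\textbf{The negative half-line term does not cancel.} Your identity $1-\int_{B(x,1)}W^\lambda_t\,dy=\int_{(0,\infty)\setminus B(x,1)}W^\lambda_t\,dy-\int_0^\infty(W^\lambda_t-W_t)\,dy+\int_{-\infty}^0W_t(x-y)\,dy$ is correct, but its last term has nothing to cancel against. The $t>1$ part yields exactly $f(x)/\Gamma(1-s)$ and the $\int_{B(x,1)}\int_1^\infty$ term, and every other piece is already one of the displayed terms. Since $\int_{-\infty}^0W_t(x-y)\,dy=\int_x^\infty W_t(z)\,dz$ is positive and at most $\tfrac12e^{-x^2/(4t)}$, honest bookkeeping gives
\[
B_\lambda^sf(x)=\bigl(\text{stated right-hand side of (a)}\bigr)+\frac{s}{\Gamma(1-s)}\,f(x)\int_0^1\int_{-\infty}^0W_t(x-y)\,dy\,\frac{dt}{t^{1+s}}.
\]
In (b) you get the stated right-hand side minus $\frac{s}{\Gamma(1+s)}f(x)\int_0^1\int_{-\infty}^0W_t(x-y)\,dy\,t^{s-1}dt$. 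Both extra terms are finite and nonzero wherever $f(x)\neq0$, so the formulas cannot be verified as displayed; your decomposition is in fact more accurate than the paper's.

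The paper reaches the stated formulas only through an implicit use of $\int_0^\infty W_t(x-y)\,dy=1$:
\begin{enumerate}
\item In \eqref{eq: decomposition mbbB lambda s} it replaces $\int_0^{x^2}(W^\lambda_t(1)(x)-1)t^{-1-s}dt$ by $\int_0^\infty\int_0^{x^2}(W^\lambda_t(x,y)-W_t(x-y))t^{-1-s}dt\,dy$.
\item In \eqref{eq: decomposition Bessel negative power} it first uses $W_t(1)(x)=1$ (full line) to produce $1/s$, and then writes $W_t(1)(x)$ as $\int_0^\infty W_t(x-y)\,dy$.
\end{enumerate}
For $\lambda=\tfrac12$, where $W^{1/2}_t(x,y)=W_t(x-y)-W_t(x+y)$, the discrepancy is explicit: $W^{1/2}_t(1)(x)-1=-2\int_x^\infty W_t$, while $\int_0^\infty(W^{1/2}_t(x,y)-W_t(x-y))\,dy=-\int_x^\infty W_t$. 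What your method actually proves is the corrected identity. In it, $\int_0^\infty\int_0^1(W^\lambda_t(x,y)-W_t(x-y))\cdots\,dt\,dy$ is replaced by $\int_0^1(W^\lambda_t(1)(x)-1)\cdots\,dt$, which matches the corrector $\rho(x)$ in the Fu--Li--Zhang theorem quoted in the introduction.

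\textbf{The absolute-convergence exponent is wrong in (a).} For the first integral, $|x-y|\cdot|x-y|^{-1-2s}=|x-y|^{-2s}$ is integrable near $y=x$ only when $s<\tfrac12$, not for all $s<1$. This is sharp, not a lossy bound. Since $W^\lambda_t(x,y)=W_t(x-y)\bigl(1+O(t/(xy))\bigr)$ for $t\ll xy$, restricting to $t\in(|x-y|^2,2|x-y|^2)$ gives $\int_0^\infty W^\lambda_t(x,y)t^{-1-s}dt\ge c|x-y|^{-1-2s}$ for $y$ near $x$. Hence for $s\in[\tfrac12,1)$, the integral $\int_{B(x,1)}|f(y)-f(x)|\int_0^\infty W^\lambda_t(x,y)t^{-1-s}dt\,dy$ diverges at every $x$ with $f'(x)\neq0$.

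So your term-by-term plan, and the ``Furthermore'' clause, can only work for $s<\tfrac12$. For larger $s$ the local $y$-integral has to be read as a principal value, or the term $f'(x)(y-x)$ must be subtracted. The paper's own estimates \eqref{eq: proof Thm1.1 first term}--\eqref{eq: proof Thm1.1 last term} are in fact only carried out for $s\in(0,\tfrac14)$. Part (b) is unaffected here, because its kernel $\int_0^\infty W^\lambda_t(x,y)t^{s-1}dt$ is locally integrable in $y$.
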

As we will see in the proof of Theorem~\ref{thm: Th1.1}, the representation formulas established there allow us to extend the definition of $B_\lambda^s$ for $s\in (-a,a)$, $s\neq 0$, where $0<a<1$ is sufficiently small, to a wider class of functions. 

Let $x\in (0,\infty)$ and $\theta\in (0,1]$. A function $f:(0,\infty)\rightarrow \mathbb{C}$ is said to be in ${\textup{Lip}}_{{\rm loc},x}(0,\infty)$ when
\[\|f\|_{{\textup{Lip}}^\theta_{{\rm loc},x}(0,\infty)}:=\sup_{y\in B(x,1)}\frac{|f(x)-f(y)|}{|x-y|^\theta}<\infty.
\]
We denote by ${\textup{Lip}}^\theta_{{\rm loc},{\rm uni}}(0,\infty)$ the space that consists of those functions $f$ defined on $(0,\infty)$ such that
\[
\|f\|_{{\textup{Lip}}^\theta_{{\rm loc},{\rm uni}}(0,\infty)}:=\sup_{x\in (0,\infty)}\|f\|_{{\textup{Lip}}^\theta_{{\rm loc},x}(0,\infty)}<\infty.
\]

The proof of Theorem~\ref{thm: Th1.1} (see Section~\ref{sec: proofs Th1.1 Th1.2}) also shows that the integrals appearing in the representation formulas are absolutely convergent when $f\in L^1\left((0,\infty),\frac{1}{1+y}dy\right)\cap {\textup{Lip}}^\theta _{{\rm loc},x}(0,\infty)$, with $\lambda\in (-\min\left\{\tfrac{\theta}{4},\frac{\lambda}{2}+\frac{1}{2}\right\},\tfrac{\theta}{4})$. Accordingly, if $x\in (0,\infty)$ and $f\in L^1\left((0,\infty),\frac{1}{1+y}dy\right)\cap {\textup{Lip}}^\theta _{{\rm loc}, x}(0,\infty)$ we define $B_\lambda^sf(x)$, $s\in (-\min\left\{\tfrac{\theta}{4},\frac{\lambda}{2}+\frac{1}{2 }\right\},\tfrac{\theta}{4})$, $s\neq 0$, by the representation formulas in Theorem~\ref{thm: Th1.1}.

In order to guarantee that $C_c^\infty(0,\infty)$ is contained in the domain of $B_\lambda^s$ for every $s\in\mathbb R$, and more generally in the domain of every logarithmic operator $\log^mB_\lambda$, $m\in\mathbb N$, that we introduce below, we recall a classical space of test functions due to Zemanian.

The function space $H_\lambda$ was defined by Zemanian in \cite{Ze} as follows. A smooth function $\phi$ on $(0,\infty)$ belongs to $H_\lambda$ if, for every $m,k \in \mathbb{N}\cup\{0\}$, 
\[
\gamma^\lambda_{m,k} (\phi) = \sup_{x\in (0,\infty)} \left| x^m\left(\frac{1}{x}\frac{d}{dx}\right)^k\left(x^{-\lambda - \tfrac12}\phi(x)\right)\right| < \infty.
\] 
It is clear that $C_c^\infty(0,\infty)$ is contained in $H_\lambda$.
The space $H_\lambda$ is a Fréchet space when the topology associated with the family of seminorms $\left\{\gamma^\lambda_{m,k}\right\}_{m,k\in \mathbb N}$ is considered on $H_\lambda$. Thus, the Hankel transform $h_\lambda$ is an isomorphism from $H_\lambda$ into itself (see \cite[Lemma 8]{Ze}) when $\lambda>-\tfrac12$.

We have that $H_\lambda \subset D(B_\lambda^s)$, $s \in \left(-\frac{\lambda}{2}-\frac{1}{2},+\infty\right)$. Indeed, let $\phi \in H_\lambda$. According to \cite[Lemma 8]{Ze}, $h_\lambda\phi\in H_\lambda$. Then,
\[
\int_0^\infty \left|x^{2s}h_\lambda(\phi)(x)\right|^2 dx \leq C\left(\int_0^1\left|x^{2s+\lambda+1/2}\right|^2dx + \int_1^\infty\left|\frac{x^{2s+\lambda+1/2}}{(1+x)^\ell}\right|^2dx\right) < \infty
\]
provided that $s> -\frac{\lambda}{2}-\frac{1}{2}$ and by taking $\ell \in \mathbb N_0$ and $\ell > 2s+\lambda+1$. Thus we proved that $\phi \in D(B_\lambda^s)$ when $s >-\frac{\lambda}{2}-\frac{1}{2}$.

In particular, $C_c^\infty(0,\infty)\subset H_\lambda\subset D(B_\lambda^s)$ for every $s\in\mathbb R$, a fact that we shall use throughout Section~\ref{sec: fract powers}.

Now we turn our attention to the logarithmic operators related to $B_\lambda$. By means of~\eqref{eq: spectral multiplier}, we define $\log^mB_\lambda$, $m\in \mathbb N$,  by
\[
\log^mB_\lambda =h_\lambda ((2\log x)^mh_\lambda (f)),\quad f\in D(\log^mB_\lambda).
\]
Recently, when $B_\lambda$ is replaced by the Laplacian $\Delta$ in $\mathbb R^d$, these operators  have been studied in \cite{Ch, ChHW} and \cite{ChW} (see also \cite{ChV2, ChV1, F, JSW, LW} and \cite{Le}). In the Laplacian case, the Fourier transform plays the analogous role to the Hankel transform.

In \cite[Theorem 1.1]{ChW} it was proved that if $f\in C_c^\infty (\mathbb R^d)$ then
\begin{equation}\label{eq: def log Delta}
(\log (-\Delta)f)(x)=\frac{d}{ds}(-\Delta )^s\Big|_{s=0}f(x)=c_d\int_{\mathbb R^d}\frac{f(x)\mathcal X_{B(x,1)}(y)-f(y)}{|x-y|^d}dy+\rho _df(x),\quad x\in \mathbb R^d,   
\end{equation}
where $c_d=\Gamma (d/2)\pi ^{-d/2}$ and $\rho _d=2\log2+ \psi(d/2)-\gamma$, being $\gamma=-\Gamma'(1)$ the Euler-Macheroni's constant and $\psi =\Gamma '/\Gamma $ the Digamma function. The representation \eqref{eq: def log Delta} allows us to extend the definition of $\log (-\Delta)$ to a larger class of functions satisfying Lipschitz and integrability conditions.

As we mentioned above, Fu, Li and Zhang \cite{FLZ} have studied the logarithm of the Bessel operator $B_\lambda$. In their notation, our operator $B_\lambda$ is denoted by $\Delta _{\lambda+1/2}$. Since $C_c^\infty(0,\infty)\subset H_\lambda\subset D(\log B_\lambda)$, their definition of $\log B_\lambda$ applies in particular to functions in $H_\lambda$. 

The main result in \cite{FLZ} is the following one.
\begin{ThA}[{\cite[Theorem 2.4]{FLZ}}]\label{thm: ThA}  Let $f\in C_c^\infty (0,\infty)$ and $\lambda \geq 0$. Then,
\begin{align}\label{eq: def log Bessel FLZ}
    (\log B_\lambda)f(x)&=\frac{d}{ds}B_\lambda^s\Big|_{s=0^+}f(x)\nonumber\\
    &=\int_0^\infty (f(x)\mathcal X_{B(x,1)}(y)-f(y))\int_0^\infty \frac{W_t^\lambda(x,y)}{t} dtdy+\rho (x)f(x),\quad x\in (0,\infty),
\end{align}
where
\begin{align*}
    \rho (x)&=\int_0^1\int_{(0,\infty)\setminus B(x,1)}\frac{W_t^\lambda (x,y)}{t}dydt-\int_1^\infty \int_{B(x,1)}\frac{W_t^\lambda (x,y)}{t}dydt-\int_0^1\frac{W_t^\lambda(1)(x)-1}{t}dt-\gamma.
\end{align*}

Furthermore, if $1<p<\infty$, $(\log B_\lambda)f\in L^p(0,\infty)$ and
\[
\lim_{s\rightarrow 0^+}\frac{B_\lambda^sf-f}{s}=(\log B_\lambda)f,\quad \mbox{ in }L^p(0,\infty).
\]    
\end{ThA}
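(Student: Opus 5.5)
The plan is to start from the subordination formula for fractional powers via the heat semigroup, pass to the derivative at $s=0^+$ on the Hankel transform side, and then bring the resulting representation back to the kernel side. For $f\in C_c^\infty(0,\infty)\subset H_\lambda$ and $s\in(0,1)$, the identity
\[
y^{2s}=\frac{s}{\Gamma(1-s)}\int_0^\infty \bigl(1-e^{-ty^2}\bigr)\frac{dt}{t^{1+s}}
\]
gives, after applying $h_\lambda$, that $B_\lambda^sf=\frac{s}{\Gamma(1-s)}\int_0^\infty (f-W_t^\lambda f)\,t^{-1-s}\,dt$. Dividing by $s$ and using $\Gamma(1-s)\to1$, I would show that
\[
\frac{B_\lambda^sf-f}{s}\longrightarrow \int_0^1\frac{f-W^\lambda_tf}{t}\,dt-\int_1^\infty\frac{W_t^\lambda f}{t}\,dt-\gamma f .
\]
This follows on the transform side from the scalar limit $\frac{y^{2s}-1}{s}\to 2\log y$ together with the classical identity
\[
2\log y=\int_0^1\frac{1-e^{-ty^2}}{t}\,dt-\int_1^\infty\frac{e^{-ty^2}}{t}\,dt-\gamma .
\]
Hence the derivative at $0^+$ equals $h_\lambda((2\log x)h_\lambda f)=\log B_\lambda f$. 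The convergence holds in $L^2$ by dominated convergence, since $h_\lambda f\in H_\lambda$ decays rapidly and is $O(x^{\lambda+1/2})$ at $0$, so $|\log x|\,|h_\lambda f|\in L^2$. Passing to a subsequence then gives a.e. pointwise equality.

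Next I would convert this semigroup expression to the kernel form. I would write $W_t^\lambda f(x)=\int_0^\infty W_t^\lambda(x,y)f(y)\,dy$ and insert $\pm f(x)\mathcal X_{B(x,1)}(y)$, as well as $\pm f(x)$ in the form $f(x)\int W_t^\lambda(x,y)\,dy$ for $t\le1$. The term $\int_0^1\frac{f(x)-W_t^\lambda f(x)}{t}\,dt$ splits into three pieces:
\begin{itemize}
\item the contribution of $\int_0^1\int_0^\infty\bigl(f(x)\mathcal X_{B(x,1)}(y)-f(y)\bigr)\frac{W^\lambda_t(x,y)}{t}\,dy\,dt$;
\item the piece $f(x)\int_0^1\int_{(0,\infty)\setminus B(x,1)}\frac{W_t^\lambda(x,y)}{t}\,dy\,dt$;
\item the piece $-f(x)\int_0^1\frac{W_t^\lambda(1)(x)-1}{t}\,dt$.
\end{itemize}
The term with $t>1$ is treated similarly, producing the piece $-f(x)\int_1^\infty\int_{B(x,1)}\frac{W_t^\lambda(x,y)}{t}\,dy\,dt$ after adding the $\mathcal X_{B(x,1)}$ correction. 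Combining the $t<1$ and $t>1$ contributions gives the stated formula with $\rho(x)$, and the derivative at $0^+$ matches the form of Theorem~\ref{thm: Th1.1}\ref{itm: Th1.1 a} with $s\to0$.

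Justifying Fubini and absolute convergence uses the bounds \eqref{eq: heat Bessel FeynmanKac}, \eqref{eq: heat Bessel t>xy} and \eqref{eq: heat Bessel comparison t<xy}; here $\lambda\ge0$ helps.
\begin{itemize}
\item Near the diagonal, the bound $|f(x)-f(y)|\le C|x-y|$ together with $\int_0^\infty W_t(x-y)\,t^{-1}\,dt\sim c|x-y|^{-1}$ would only give a logarithmic divergence. I would therefore use the smoothness of $f$ to second order: the symmetric first-order part cancels up to an error controlled by \eqref{eq: heat Bessel comparison t<xy}, giving integrability.
\item The term $W_t^\lambda(1)(x)-1$ is $O(\sqrt t/x)$ for small $t$ by \eqref{eq: heat Bessel comparison t<xy}, which is integrable against $dt/t$.
\item For $t>1$, the bound \eqref{eq: heat Bessel t>xy} gives decay of order $t^{-\lambda-1}$, which is integrable.
\end{itemize}

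For the $L^p$ statement with $1<p<\infty$, I would show that $\frac{B^s_\lambda f-f}{s}-\log B_\lambda f$ tends to $0$ in $L^p$. The route is to write the difference as $\int_0^\infty (W_t^\lambda f-f)\,K_s(t)\,dt$-type expressions with explicit weights, and to bound $\|W_t^\lambda f-f\|_p\le C\min(t,1)$ for small $t$ (using $B_\lambda f\in L^p$) and $\|W_t^\lambda f\|_p\le Ct^{-\epsilon}$ for large $t$. The latter decay uses $f\in L^1\cap L^p$ and $\lambda\ge0$ through the kernel bound. Dominated convergence in $t$ then finishes the argument.

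\textbf{Main obstacle.} I expect the hardest step to be the absolute convergence of the near-diagonal principal value term, and the large-$t$ decay in $L^p$ needed for the $L^p$ convergence.
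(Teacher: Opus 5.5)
Your argument is correct, but it is organized differently from the paper. The paper does not reprove Theorem~A. It obtains the representation as the case $m=1$ of Theorem~\ref{thm: Th1.2}, in four steps:
\begin{enumerate}
\item it writes $B_\lambda^sf(x)$ pointwise with the weight $F(t,s)=\frac{s}{\Gamma(1-s)}t^{-1-s}$ and the splitting \eqref{eq: decomposition mbbB lambda s};
\item it bounds every $\partial_s^m$ of each piece uniformly in $s$ (\eqref{eq: proof Thm1.1 first term}--\eqref{eq: proof Thm1.1 last term});
\item it differentiates under the integral and lets $s\to0^+$;
\item it identifies the limit with $(\log B_\lambda)f$ through Corollary~\ref{cor: Cor2.5} and an a.e.\ convergent subsequence.
\end{enumerate}
The $L^p$ part is the $m=0$ case of the Taylor-remainder estimates of Section~\ref{sec: proof Th1.3}. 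There the remainder of $F(t,s)$ is bounded pointwise against the kernel, and $p>1$ enters through the decay $x^{-(1-2\varepsilon)}$ away from $\supp f$.

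You never differentiate in $s$. The Frullani-type identity for $2\log y$ gives $(\log B_\lambda)f=\int_0^1(f-W_t^\lambda f)\frac{dt}{t}-\int_1^\infty W_t^\lambda f\frac{dt}{t}-\gamma f$ directly. Inserting $\pm f(x)\mathcal X_{B(x,1)}(y)$ and $\pm f(x)$ then produces exactly the $\rho(x)$ of Theorem~A. This is a real advantage here. The paper's formula carries $\int_0^\infty\int_0^1(W_t^\lambda(x,y)-W_t(x-y))\frac{dt}{t}\,dy$, and on the half-line $\int_0^\infty W_t(x-y)\,dy=1-\int_x^\infty W_t(z)\,dz$. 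So that term equals $\int_0^1(W_t^\lambda(1)(x)-1)\frac{dt}{t}+\int_0^1\frac{1}{t}\int_x^\infty W_t(z)\,dz\,dt$, and Theorem~A's form does not follow from it merely by splitting at $t=1$. Your bookkeeping never introduces $W_t$, so the issue does not arise.

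Likewise, your $L^p$ bounds $\|W_t^\lambda f-f\|_p\le Ct\|B_\lambda f\|_p$ and $\|W_t^\lambda f\|_p\le Ct^{-(1-1/p)/2}\|f\|_1$ put the role of $p>1$ in large-$t$ decay. That decay is genuinely needed: uniform boundedness alone would leave the divergent $\int_1^\infty t^{-1}|t^{-s}/\Gamma(1-s)-1|\,dt$. The price is scope. Your route handles one logarithm and uses $B_\lambda f\in L^p$, while the paper's machinery yields all $\log^mB_\lambda$ and Theorem~\ref{thm: Th1.3} for $f\in C_c(0,\infty)\cap{\textup{Lip}}^\theta_{{\rm loc},{\rm uni}}(0,\infty)$.

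One correction to your own assessment: the near-diagonal term is not an obstacle. By \eqref{eq: heat Bessel FeynmanKac} and $\int_0^\infty t^{-3/2}e^{-ca^2/t}\,dt=Ca^{-1}$, you have $\int_0^\infty W_t^\lambda(x,y)\frac{dt}{t}\le C|x-y|^{-1}$. Combined with $|f(x)-f(y)|\le C|x-y|$, the integrand is bounded on $B(x,1)$ and the integral converges absolutely; this is \eqref{eq: proof Thm1.1 first term} with $s=m=0$. There is no logarithmic divergence; that would require $f$ merely continuous and not Dini. Your second-order symmetrization is therefore superfluous, and it would be awkward anyway, since $B(x,1)$ is not symmetric about $x$ when $x<1$.

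Two smaller points:
\begin{enumerate}
\item \eqref{eq: heat Bessel comparison t<xy} only covers $y>t/x$. To get your bound on $W_t^\lambda(1)(x)-1$ you must also treat the region $y<t/x$ via \eqref{eq: heat Bessel t>xy}, and the lost Euclidean mass $\int_x^\infty W_t(z)\,dz$. Both are $O(e^{-cx^2/t})$ for fixed $x$ and small $t$, so the conclusion stands.
\item Nothing in your argument needs $\lambda\ge0$ rather than $\lambda>-\tfrac12$.
\end{enumerate}
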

The representation formula \eqref{eq: def log Bessel FLZ} enables us to define $(\log B_\lambda)f$ for functions $f$ in ${\textup{Lip}}^\theta(0,\infty)$, with $\theta\in(0,1]$, having compact support in $(0,\infty)$.

Unlike the representation in \eqref{eq: def log Delta} for $\log(-\Delta)$, where the corrector factor is constant, the corresponding factor in \eqref{eq: def log Bessel FLZ} depends on $x\in(0,\infty)$. The reason is that the Euclidean heat semigroup $\{W_t\}_{t>0}$ is Markovian, namely, $W_t(1)=1$, $t>0$,  whereas the Bessel heat semigroup $\{W_t^\lambda\}_{t>0}$ is not. This lack of Markovian property introduces additional difficulties in proving \eqref{eq: def log Bessel FLZ} and leads to a more intricate corrector factor.

Our purpose is to continue the study about the logarithm of $B_\lambda$ in two directions. We first define higher order logarithm operator $\log^mB_\lambda$, $m\in \mathbb N$, and we establish asymptotic expansions of the fractional powers $B_\lambda^s$ and $B_\lambda^{-s}$, as $s\rightarrow 0^+$, through higher order logarithm operators of $B_\lambda$. Secondly, we establish a representation of $\log B_\lambda $ by solving an extension problem.

In the following we prove a representation for $\log^m B_\lambda $, $m\in \mathbb N$.

\begin{thm}\label{thm: Th1.2}
Let $\lambda >-\tfrac12$, $f\in C_c^\infty(0,\infty)$ and $m\in \mathbb N$. Then, for almost every $x\in (0,\infty)$, 
\begin{align*}
    (\log^m B_\lambda)f(x)&=-m\sum_{j=0}^{m-1}(-1)^j\binom{m-1}{j}\partial_s^{m-j-1}\left(\frac{1}{\Gamma (1-s)}\right)\Bigg|_{s=0}\\
    &\hspace{-0.5cm}\times \left[\int_0^\infty (f(y)-\mathcal X_{B(x,1)}(y)f(x))\int_0^\infty W_t^\lambda (x,y)\frac{(\log t)^j}{t}dtdy\right.\\
    &\hspace{-0.5cm}+f(x)\left(-\int_{(0,\infty)\setminus B(x,1)}\int_0^1W_t^\lambda (x,y)\frac{(\log t)^j}{t}dtdy+\int_{B(x,1)}\int_1^\infty W_t^\lambda (x,y)\frac{(\log t)^j}{t}dtdy\right.\\
&\hspace{-0.5cm}+\left.\left.\int_0^\infty \int_0^1(W_t^\lambda (x,y)-W_t(x-y))\frac{(\log t)^j}{t}dtdy\right)\right]+f(x)\partial_s^{m}\left(\frac{1}{\Gamma (1-s)}\right)\Bigg|_{s=0}.
\end{align*}
Furthermore, all the above integrals are absolutely convergent for every $x\in (0,\infty)$.

We also have that
\[
(\log^m B_\lambda)f(x)=\partial_s^m B_\lambda^s(f)(x)=(-1)^m\partial_s^m B_\lambda^{-s}(f)(x)\Big|_{s=0},\quad a.e.\quad x\in (0,\infty).
\]
\end{thm}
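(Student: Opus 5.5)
Write $F(s,x)$ for the bracket in Theorem~\ref{thm: Th1.1}\ref{itm: Th1.1 a}, i.e.\ the sum of the four integrals with weight $t^{-1-s}$, so that $B_\lambda^sf(x)=-\frac{s}{\Gamma(1-s)}F(s,x)+\frac{f(x)}{\Gamma(1-s)}$. The plan is to differentiate this representation $m$ times in $s$ at $s=0$. Since $t^{-1-s}=t^{-1}e^{-s\log t}$, formally $\partial_s^jF(s,x)|_{s=0}=(-1)^jF_j(x)$, where $F_j$ is the bracket in the statement with weight $(\log t)^j/t$. By Leibniz, $\partial_s^m\big(s\,g(s)\big)|_{s=0}=m\,g^{(m-1)}(0)$ for $g(s)=F(s,x)/\Gamma(1-s)$. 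Applying Leibniz again to $g^{(m-1)}$ gives $\sum_{j=0}^{m-1}\binom{m-1}{j}\partial_s^{m-1-j}(1/\Gamma(1-s))|_{0}(-1)^jF_j(x)$. Adding $f(x)\partial_s^m(1/\Gamma(1-s))|_0$ reproduces the formula exactly. So the identity $\partial_s^mB_\lambda^sf(x)|_{s=0}=$ RHS reduces to justifying differentiation under the integral signs for $s$ in a neighbourhood of $0$.

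\textbf{Step 1: absolute convergence and differentiation.} I would show that $F(s,x)$ is $C^\infty$ in $s\in(-a,a)$ for some small $a>0$, with derivatives obtained termwise. This needs integrable majorants of $|W_t^\lambda(x,y)|\,|\log t|^j t^{-1-s}$, uniform for $|s|\le a/2$, on each of the four regions. The first term uses $|f(y)-\mathcal X_{B(x,1)}(y)f(x)|\le C|x-y|$ on $B(x,1)$ and boundedness of $f$ with compact support elsewhere. The bounds are then \eqref{eq: heat Bessel FeynmanKac}, \eqref{eq: heat Bessel t>xy} and \eqref{eq: heat Bessel comparison t<xy}, splitting $t<xy$ and $t>xy$ as in the proof of Theorem~\ref{thm: Th1.1}.

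The extra factor $|\log t|^j$ is harmless, since it is absorbed by $t^{\pm\varepsilon}$. At large $t$ the decay $t^{-\lambda-1}(xy)^{\lambda+1/2}$ from \eqref{eq: heat Bessel t>xy} gives room. Near $t=0$, the Lipschitz gain $|x-y|$ against $e^{-|x-y|^2/4t}t^{-1/2}$ produces $t^{-1/2}$ after integration in $y$, and the comparison term gives $\sqrt t/(xy)$. With these majorants, dominated convergence yields absolute convergence of all integrals for every $x$, and allows the $s$-derivatives to pass inside. This is the main technical obstacle, in particular the uniformity near $s=0$ on the negative side and the region $t>xy$ with $y$ near $0$, where $\lambda>-1/2$ is used.

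\textbf{Step 2: identify with $\log^mB_\lambda$.} On the spectral side, $h_\lambda(B_\lambda^sf)(x)=x^{2s}h_\lambda f(x)$, and $\partial_s^m x^{2s}|_{s=0}=(2\log x)^m$. Since $h_\lambda f\in H_\lambda$, the difference quotients $(x^{2s}-\cdots)/s$ are dominated in $L^2$ by $C|\log x|^{m}(x^{-2a}+x^{2a})|h_\lambda f(x)|$, which is in $L^2$ because $|h_\lambda f(x)|\le Cx^{\lambda+1/2}$ near $0$ with $\lambda>-1/2$, and decays rapidly at infinity. Hence $s\mapsto B_\lambda^sf$ is $C^m$ from a neighbourhood of $0$ into $L^2$, with $\partial_s^mB_\lambda^sf|_{s=0}=\log^mB_\lambda f$. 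An $L^2$-derivative agrees a.e.\ with the pointwise derivative obtained in Step~1 (pass to an a.e.\ convergent subsequence of iterated difference quotients). This gives the formula for a.e.\ $x$.

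\textbf{Step 3: negative powers.} For $B_\lambda^{-s}$, the same spectral argument with $x^{-2s}$ gives $(-1)^m\partial_s^mB_\lambda^{-s}f|_{s=0}=\log^mB_\lambda f$ in $L^2$, using $H_\lambda\subset D(B_\lambda^{-s})$ for small $s$. Alternatively, one checks that the formula in Theorem~\ref{thm: Th1.1}\ref{itm: Th1.1 b} is the formula in \ref{itm: Th1.1 a} with $s$ replaced by $-s$, and applies the chain rule.
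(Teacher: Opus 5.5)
Your argument is in substance the paper's. Both differentiate the semigroup representation of Theorem~\ref{thm: Th1.1} under the integral sign, with majorants from \eqref{eq: heat Bessel FeynmanKac}--\eqref{eq: heat Bessel comparison t<xy}. Both then identify the pointwise derivative with the spectrally defined $\log^mB_\lambda f$ via $L^2$-convergence on the Hankel side and an a.e.\ convergent subsequence. The differences are minor:
\begin{itemize}
\item You differentiate at $s=0$ on a two-sided neighbourhood, using part~\ref{itm: Th1.1 b} for $s<0$, which needs $a<\frac{\lambda+1}{2}$. The paper works on $s\in(0,\frac14)$, invokes Corollary~\ref{cor: Cor2.5} and lets $s\to0^+$ inside the integrals.
\item You split at $t=1$ from the start, so your Leibniz computation gives the stated coefficients directly. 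The paper first splits at $t=x^2$ and re-splits afterwards.
\end{itemize}

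There is, however, a genuine defect in the identity you import, and the paper's proof has the same one (see \eqref{eq: decomposition mbbB lambda s} and \eqref{eq: decomposition Bessel negative power}). From $\Gamma(-s)B_\lambda^sf(x)=\int_0^\infty (W_t^\lambda f(x)-f(x))\,t^{-1-s}\,dt$, the exact decomposition produces the term $f(x)\int_0^1 (W_t^\lambda(1)(x)-1)\,t^{-1-s}\,dt$. The bracket of Theorem~\ref{thm: Th1.1}\ref{itm: Th1.1 a} replaces it by $f(x)\int_0^\infty\int_0^1 (W_t^\lambda(x,y)-W_t(x-y))\,t^{-1-s}\,dt\,dy$. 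This tacitly uses $\int_0^\infty W_t(x-y)\,dy=1$. In fact that integral equals $1-E_t(x)$, where $E_t(x)=\int_{-\infty}^0W_t(x-y)\,dy\in(0,\frac12e^{-x^2/(4t)}]$.

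Consequences:
\begin{itemize}
\item Your starting identity $B_\lambda^sf(x)=-\frac{s}{\Gamma(1-s)}F(s,x)+\frac{f(x)}{\Gamma(1-s)}$ fails at every $x$ with $f(x)\neq0$, unless $-f(x)\int_0^1E_t(x)\,t^{-1-s}\,dt$ is added to $F(s,x)$.
\item The same correction is needed in part~\ref{itm: Th1.1 b}, where $W_t(1)(x)=1$ is silently turned into $\int_0^\infty W_t(x-y)\,dy$.
\item Each $j$-th bracket in the final formula must therefore contain the extra term $-f(x)\int_0^1E_t(x)\frac{(\log t)^j}{t}\,dt$.
\end{itemize}
A consistency check confirms this. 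For $m=1$ the corrected formula coincides exactly with Theorem~\ref{thm: ThA}, while the formula as stated differs from it by $-f(x)\int_0^1E_t(x)\frac{dt}{t}\neq0$.

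The repair costs nothing in your scheme. Define $F(s,x)$ directly from the semigroup formula, either keeping $\int_0^1(W_t^\lambda(1)(x)-1)\,t^{-1-s}\,dt$ or adding the $E_t$-term. Since $0<E_t(x)\le\frac12e^{-x^2/(4t)}$, the new term is smooth in $s$ with trivial majorants. Your Steps~1--3 and the Leibniz computation then go through verbatim.
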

This result represents a generalization of Theorem~\ref{thm: ThA}. It is worth noting that our proof of Theorem~\ref{thm: Th1.2} is shorter than the one of Theorem~\ref{thm: ThA} in \cite{FLZ}.

Arguing as in the proof of Theorem~\ref{thm: Th1.2}, one sees that the integrals in its representation formulas are absolutely convergent whenever $f\in L^1\!\left((0,\infty),\frac{dy}{1+y}\right)\cap
{\textup{Lip}}^\theta_{{\rm loc},x}(0,\infty),$
for some $\theta\in(0,1]$ (see Remark~\ref{rem: log}). Accordingly, if $x\in(0,\infty)$, $m\in\mathbb N$, and
$f\in L^1\!\left((0,\infty),\frac{dy}{1+y}\right)\cap
{\textup{Lip}}^\theta_{{\rm loc},x}(0,\infty),$
we define $(\log^m B_\lambda)f(x)$ by the representation formula in Theorem~\ref{thm: Th1.2}.
We will prove in Propositions~\ref{prop: Prop2.3} and~\ref{prop: Prop2.4} that, if $s_0\in(0,1)$ and $m_0\in\mathbb N$, then $\left.\partial_s^m B_\lambda^sf\right|_{s=0}
=(\log^mB_\lambda)f,$
for every $f\in D(B_\lambda^{s_0})\cap D(\log^{m_0}B_\lambda)$, $0<s<s_0$, and $m\in\mathbb N$ with $m\le m_0$, and
$\left.\partial_s^m B_\lambda^{-s}f\right|_{s=0}
=(-1)^m(\log^m B_\lambda)f,$
for every $f\in D(B_\lambda^{-s_0})\cap D(\log^{m_0}B_\lambda)$, $0<s<s_0$, and $m\in\mathbb N$ with $m\le m_0$. In both cases, $\partial_s$ is understood in the $L^2(0,\infty)$ sense.

Chen (\cite{Ch}) studied the higher-order logarithmic operators $\log^m(-\Delta)$, $m\in \mathbb N$. Motivated by the equality
\[
r^\sigma =1+\sum_{n=1}^m\frac{\sigma^n}{n!}(\log r)^n+o(\sigma ^m),\quad \mbox{ as }\sigma \rightarrow 0^+,
\]
Chen established in \cite[Theorems~1.1 and~1.2]{Ch} asymptotic expansions of the fractional operators $(-\Delta)^s$ and $(-\Delta )^{-s}$, as $s\rightarrow 0^+$, in terms of the logarithmic operators $\log^m(-\Delta)$, $m\in \mathbb N$.

Our next result establishes the corresponding Taylor asymptotic expansions for $B_\lambda^s$ and $B_\lambda^{-s}$ in terms of the logarithmic operators $\log^m(B_\lambda)$.

\begin{thm}\label{thm: Th1.3}
Let $\theta \in (0,1]$, $f\in C_c(0,\infty)\cap {\textup{Lip}}^\theta _{{\rm loc},{\rm uni}}(0,\infty)$, $1<p\leq \infty $, $\lambda >-\tfrac12$ and $m\in \mathbb N$. Then,
\[\lim_{s\rightarrow 0^+}\frac{(m+1)!}{s^{m+1}}\left(B_\lambda^sf-\left(f+\sum_{j=1}^m\frac{s^j}{j!}(\log^jB_\lambda)f \right)\right)=(\log^{m+1}B_\lambda)f,\quad \mbox{ in }L^p(0,\infty);\]
\[\lim_{s\rightarrow 0^+}\frac{(m+1)!}{s^{m+1}}\left(B_\lambda^{-s}f-\left(f+\sum_{j=1}^m\frac{(-1)^js^j}{j!}(\log^jB_\lambda)f\right)\right)=(-1)^{m+1}(\log^{m+1}B_\lambda)f, \mbox{ in }L^p(0,\infty).\]
\end{thm}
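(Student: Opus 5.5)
We write $B_\lambda^s f(x)=F(s,x)$ and use the pointwise representations of Theorems~\ref{thm: Th1.1} and~\ref{thm: Th1.2}, which make sense for $f\in C_c(0,\infty)\cap {\textup{Lip}}^\theta_{{\rm loc},{\rm uni}}(0,\infty)$ and $|s|$ small. The idea is to treat $s\mapsto F(s,x)$ as a smooth function near $s=0$ for each fixed $x$, apply Taylor's formula with integral remainder up to order $m+1$, and then control the remainder in $L^p$ uniformly in $s$.

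\emph{Step 1: rewriting the representation.} Write the bracket in Theorem~\ref{thm: Th1.1}\ref{itm: Th1.1 a} as $\mathcal{I}(s,x)$, where $t^{-1-s}=t^{-1}e^{-s\log t}$, and set $g(s)=1/\Gamma(1-s)$, which is entire. Then
\[
B_\lambda^sf(x)=-s\,g(s)\,\mathcal{I}(s,x)+g(s)f(x).
\]
Differentiating under the integral sign gives
\[
\partial_s^j\mathcal{I}(s,x)=(-1)^j\mathcal{I}_j(s,x),
\]
where $\mathcal{I}_j$ is the same bracket with the extra weight $(\log t)^j$. By Leibniz' rule, $\partial_s^n B_\lambda^s f(x)$ evaluated at $s=0$ is exactly the formula of Theorem~\ref{thm: Th1.2}, that is, $(\log^nB_\lambda)f(x)$; this is the pointwise identity stated there. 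Differentiation under the integral is justified for $|s|\le s_0<\theta/4$ by dominating $|\log t|^j t^{-1-s}$ by $C_\varepsilon(t^{-1-s_0-\varepsilon}+t^{-1+s_0+\varepsilon})$ and reusing the absolute convergence estimates from the proofs of Theorems~\ref{thm: Th1.1} and~\ref{thm: Th1.2}. Negative $s$ is allowed here because of the compact support and Lipschitz hypothesis.

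\emph{Step 2: Taylor's formula.} For each $x$,
\[
\frac{(m+1)!}{s^{m+1}}\Big(F(s,x)-\sum_{j=0}^m\frac{s^j}{j!}\partial_s^jF(0,x)\Big)-\partial_s^{m+1}F(0,x)
=(m+1)\int_0^1(1-u)^m\big(\partial_s^{m+1}F(us,x)-\partial_s^{m+1}F(0,x)\big)\,du.
\]
It therefore suffices to show $\|\partial_s^{m+1}F(\sigma,\cdot)-\partial_s^{m+1}F(0,\cdot)\|_{L^p}\to0$ as $\sigma\to0^+$. Pointwise convergence follows from dominated convergence in the integrals. To get $L^p$ convergence, the plan is to find an $L^p$ majorant $G(x)$ of $\sup_{|\sigma|\le s_0}|\mathcal{I}_j(\sigma,x)|$ for $j\le m+1$, and then apply dominated convergence in $L^p$ for $1<p<\infty$, or obtain uniform convergence for $p=\infty$.

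\emph{Step 3: the majorant, which is the main obstacle.} Let $\operatorname{supp}f\subset[a,b]$. For $x$ in a neighbourhood of $[a,b]$, the uniform Lipschitz bound $|f(y)-f(x)|\le C|x-y|^\theta$, combined with \eqref{eq: heat Bessel FeynmanKac}, \eqref{eq: heat Bessel t>xy} and \eqref{eq: heat Bessel comparison t<xy}, gives a bound independent of $x$, as in the proof of Theorem~\ref{thm: Th1.1}. For $x$ far from the support, only the term $\int f(y)\int_0^\infty W_t^\lambda(x,y)t^{-1-\sigma}|\log t|^j\,dt\,dy$ survives, since the terms multiplied by $f(x)$ vanish. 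The $t$-integral has to be shown to decay.

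For $x\to\infty$ and $y\in[a,b]$, we have $|x-y|\sim x$. The Gaussian bound gives
\[
\int_0^\infty e^{-c x^2/t}\,t^{-3/2-\sigma}|\log t|^j\,dt\lesssim x^{-1-2\sigma}\log^j x
\]
for $t$ small. For $t$ large, \eqref{eq: heat Bessel t>xy} gives a similar bound, of order $x^{-1-2\sigma}\log^jx$ (the power counts as $x^{\lambda+1/2}\,x^{-2\lambda-2-2\sigma}$ against the $t$-scale $x^2$). This majorant lies in $L^p(\infty)$ for $p>1$, uniformly in $|\sigma|\le s_0$ small, which is why $p>1$ is needed.

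For $x\to0^+$, \eqref{eq: heat Bessel t>xy} gives $W_t^\lambda(x,y)\lesssim x^{\lambda+1/2}t^{-\lambda-1}e^{-y^2/4t}$. Integrating in $t$ yields a bound $Cx^{\lambda+1/2}$, which is bounded near $0$ because $\lambda>-\tfrac12$.

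Once these bounds are in place, the case of $B_\lambda^{-s}$ follows by replacing $s$ with $-s$ in the same analytic expression: the formula of Theorem~\ref{thm: Th1.1}\ref{itm: Th1.1 b} is the continuation of \ref{itm: Th1.1 a} to negative $s$, which can be checked directly since $\frac{-s}{\Gamma(1-s)}$ with $s\mapsto-s$ becomes $\frac{s}{\Gamma(1+s)}$. Hence $\partial_s^j$ at $0$ picks up the sign $(-1)^j$, and the same Taylor argument applies.
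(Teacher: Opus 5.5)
Your argument follows essentially the same route as the paper. Both rely on a Taylor expansion in $s$, fed by the $|\log t|^{j}$-weighted versions of the integrals in Theorem~\ref{thm: Th1.1}. Both control these with the same two estimates: the ${\textup{Lip}}^\theta$ bound near $\supp f$, which is what restricts $|s|$ in terms of $\theta$, and the Gaussian decay $\lesssim x^{-1+2s_0}\log^j x$ as $x\to\infty$, which is where $p>1$ enters (with $s_0<\frac{p-1}{2p}$). Your observation that the formula in Theorem~\ref{thm: Th1.1}\ref{itm: Th1.1 b} is literally the formula in \ref{itm: Th1.1 a} evaluated at $-s$ is a real simplification. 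It makes $s\mapsto B_\lambda^sf(x)$ a single smooth function on $(-s_0,s_0)$, so the negative powers and the sign $(-1)^{m+1}$ come out automatically. The paper instead handles $B_\lambda^{-s}$ separately, ``in a similar way''.

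The one gap is the case $p=\infty$. Dominated convergence with a bounded majorant $G$ gives convergence in $L^p$ for $p<\infty$, but it does not give uniform convergence, and you offer no other argument for it.

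This is easily repaired with your own estimates. Step 3 works for every $j$, so taking $j\le m+2$ gives $\sup_{|\sigma|\le s_0}|\partial_s^{m+2}F(\sigma,x)|\le CG(x)$. The mean value theorem then yields $|\partial_s^{m+1}F(\sigma,x)-\partial_s^{m+1}F(0,x)|\le C|\sigma|\,G(x)$. Since $G\in L^p\cap L^\infty$, this settles all $1<p\le\infty$ at once, with rate $O(s)$, and makes the dominated convergence step unnecessary.

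This quantitative form is exactly what the paper does. It bounds the order-$m$ Taylor remainder by $Cs^{m+1}$ in $L^p$, using the Lagrange remainder of the kernel $\frac{s}{\Gamma(1-s)}t^{-1-s}$, uniformly in $x$ when $p=\infty$. The limit then follows by applying that bound at order $m+1$.
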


Caffarelli and Silvestre (\cite{CaSi}) established a celebrated result known as Extension Theorem where the authors gave a method that allows them to look at a nonlocal operator $(-\Delta )^s$, $0<s<1$, when it acts on a function $f$, through the solution of an initial value problem associated with an extension of the function $f$ to the half space $\mathbb R_+^{d+1}$. This procedure transfers a nonlocal problem to a local problem of higher dimension. Stinga and Torrea (\cite{ST}) solved the extension problem for fractional operators in Hilbert spaces by using the semigroup language. Later, Gal\'e, Miana and Stinga (\cite{GMS}) characterized through an extension problem the fractional powers of generators of integrated semigroups. Extension theorems have been established in other contexts (\cite{AEW, AH, BGS, BP} and \cite{KM}).

Chen, Hauer and Weth proved in \cite[Theorem 1.2]{ChHW} an extension theorem for the logarithmic operator $\log(-\Delta)$. An earlier related result can be found in \cite{GoSp}.

Now we shall define the logarithmic operator $\log B_\lambda$ through an extension problem as well. In order to do so, we start with some definitions. We say that $f\in L_{\rm loc}^1(0,\infty)$ has at most algebraic growth in the integral sense, in short, $f\in AG(0,\infty)$, when there exist constants $C,\sigma >0$ such that
\[
\|f\|_{L^1(B(x,1))}\leq C(1+x)^\sigma ,\quad x\in (0,\infty).
\]
In a similar way we define the function space $AG((0,\infty)\times (0,\infty))$.

A function $f:(0,\infty)\to \mathbb C$ is said to be Dini continuous in a point $x\in (0,\infty)$ when 
\[
\int_0^1\frac{w_{f,x}(r)}{r}dr<\infty,
\]
where $w_{f,x}(r)=\sup_{y \in B(x,r)}|f(y)-f(x)|$, $r\in (0,1)$. Moreover, a function $f:(0,\infty)\to \mathbb R$ is called uniformly Dini continuous in $(0,\infty)$ when 
\[
\int_0^1\frac{w_f(r)}{r}dr<\infty,
\]
where $w_f(r)=\sup_{x\in (0,\infty)}w_{f,x}(r)$, $r\in (0,1)$.

If $f\in L^1\big((0,\infty),\frac{dy}{1+y}\big)$ we define the extension $u_f^\lambda$ of $f$ to $(0,\infty)\times (0,\infty)$ by
\[
u_f^\lambda(x,t)=\int_0^\infty W_u^\lambda (f)(x)\frac{e^{-t^2/(4u)}}{u}du,\quad x,t\in (0,\infty).
\]

We are now in position to establish the following extension theorem.

\begin{thm}\label{thm: Th1.4}
Suppose that $f\in L^1\big((0,\infty),\frac{dy}{1+y}\big)\cap {\textup{Lip}}^\theta_{{\rm loc}, {\rm uni}}(0,\infty)$, $\theta\in (0,1]$, $\sigma>0$, and $\lambda> -\tfrac12$. Then 
\begin{enumerate}
    \item \label{itm: Th1.4 a} $u_f^\lambda \in C^2((0,\infty)\times (0,\infty))\cap AG((0,\infty)\times (0,\infty))$, $u_f^\lambda (\cdot , t)\in L^1\big((0,\infty),\frac{dy}{1+y^{\sigma +1}}\big)$ and 
    \[
    \|u_f^\lambda (\cdot ,t)\|_{L^1\big((0,\infty),\frac{dy}{1+y^{\sigma +1}}\big)}\leq C(1+\log^-t),\quad t>0,
    \]
where $\log^-t=\max\{0,-\log t\}$, $t>0$.

\item \label{itm: Th1.4 b} We have that $(\partial_t^2+\frac{1}{t}\partial_t)u_f^\lambda (x,t)=B_{\lambda ,x}u_f^\lambda(x,t)$, $x,t\in (0,\infty)$, where $B_{\lambda,x}$ denotes the operator $B_\lambda$ acting on the variable $x$. Moreover,
\[
\lim_{t\rightarrow +\infty}u_f^\lambda (x,t)=0,\quad x\in (0,\infty).
\]

\item \label{itm: Th1.4 c} The following limits hold:
\[
\lim_{t\to 0^+}t\partial_tu_f^\lambda(\cdot,t)=\lim_{t\rightarrow 0^+}\frac{u_f^\lambda (\cdot ,t)}{\log t}=-2 f,\quad \mbox{ in }L^1_{\rm loc}(0,\infty),
\]
that is, for every $K$ compact subset in $(0,\infty)$, 
\[
\lim_{t\rightarrow 0^+}\|t\partial_tu_f^\lambda(\cdot,t)+f(\cdot)\|_{L^1(K)}=\lim_{t\rightarrow 0^+}\left\|\frac{u_f^\lambda (\cdot ,t)}{\log t}+2 f(\cdot)\right\|_{L^1(K)}=0.
\]

\item \label{itm: Th1.4 d} In the distributional sense,
\[
( \log B_\lambda)f=-\lim_{t\rightarrow 0^+}(u_f^\lambda (x,t)+f(x)\log t)+f(x)\mathcal K(x),
\]
where $\mathcal K$ is given by
\begin{align*}
    \mathcal K(x)&=\int_{(0,\infty)\setminus B(x,1)}\int_0^1\frac{W_u^\lambda (x,y)-W_u(x-y)}{u}dudy\\
    &\quad +\int_{B(x,1)}\int_1^\infty \frac{W_u^\lambda (x,y)-W_u(x-y)}{u}dudy\\
    &\quad +\gamma-2\log x+\mathcal X_{(0,1)}(x)\log x+q+\widetilde{q}, \quad x\in (0,\infty),
\end{align*}
being $q=2\int_1^\infty ((1+r^2)^{-1/2}-r^{-1})dr=2\log\big(2/(1+\sqrt{2})\big)$ and $\widetilde{q}=2\int_0^1((1+t)t)^{-1/2}dt=4\log(1+\sqrt{2})$.

Furthermore, if $\phi$ is uniformly Dini continuous with compact support in $(0,\infty)$, then
\[
\int_0^\infty f(x)(\log B_\lambda)\phi (x)dx=-\lim_{t\rightarrow 0^+}\int_0^\infty \phi(x)(u_f^\lambda (x,t)+f(x) \log t)dx+\int_0^\infty \mathcal K(x)f(x)\phi (x)dx.
\]

\item \label{itm: Th1.4 e} If $f$ is a Dini continuous function at a point $x\in (0,\infty)$, then
\[
(\log B_\lambda)(f)(x)=-\lim_{t\rightarrow 0^+}(u_f^\lambda (x,t)+f(x)\log t)+\mathcal K(x)f(x).
\]
\end{enumerate}
\end{thm}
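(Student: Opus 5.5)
The approach is to rewrite $u_f^\lambda$ as an integral operator with kernel $P_t^\lambda(x,y)=\int_0^\infty W_u^\lambda(x,y)e^{-t^2/(4u)}\frac{du}{u}$, so that $u_f^\lambda(x,t)=\int_0^\infty P_t^\lambda(x,y)f(y)\,dy$ by Fubini. The Euclidean comparison kernel is
\[
P_t(x-y)=\int_0^\infty W_u(x-y)e^{-t^2/(4u)}\frac{du}{u}=\frac{1}{\sqrt{\pi}}\int_0^\infty e^{-(|x-y|^2+t^2)/(4u)}u^{-3/2}\,du=\frac{2}{\sqrt{(x-y)^2+t^2}}.
\]
This explicit formula is where the constants $q$ and $\widetilde q$ will enter. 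The kernel $P_t^\lambda$ is estimated by splitting the $u$-integral into $u<xy$ and $u>xy$. On $u<xy$ we use \eqref{eq: heat Bessel comparison t<xy} to compare with $W_u$. On $u>xy$ we use \eqref{eq: heat Bessel t>xy}, which gives the decay $(xy)^{\lambda+1/2}u^{-\lambda-1}$. Together these give
\[
0\le P_t^\lambda(x,y)\le C\,\frac{1}{\sqrt{(x-y)^2+t^2}},
\]
and, for $y$ far from $x$, a decay like $y^{-1}$ (large $y$) or $y^{\lambda+1/2}$ (small $y$).

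\textbf{Part (a).} Since $f\in L^1((0,\infty),\frac{dy}{1+y})$, the bound above gives absolute convergence and algebraic growth of $u_f^\lambda$. The logarithmic singularity $\int_{B(x,1)}((x-y)^2+t^2)^{-1/2}dy\sim 2\log^- t$ produces the factor $1+\log^-t$ in the weighted $L^1$ estimate; integrating against $\frac{dy}{1+y^{\sigma+1}}$ absorbs the growth in $x$. The $C^2$ regularity follows by differentiating under the integral, using \eqref{eq: Ilambda derivative} and Gaussian bounds for $\partial_x W_u^\lambda$ and $\partial_x^2W_u^\lambda$ on compact sets.

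\textbf{Part (b).} Write $g(u,t)=e^{-t^2/(4u)}/u$. A direct computation gives $(\partial_t^2+\frac1t\partial_t)g=\partial_u g$. Since $\partial_u W_u^\lambda(x,y)=-B_{\lambda,x}W_u^\lambda(x,y)$, integrating by parts in $u$ gives the equation. The boundary terms vanish: at $u\to0$ because of the factor $e^{-t^2/4u}$, and at $u\to\infty$ because of the decay of $W_u^\lambda$. The limit $u_f^\lambda(x,t)\to0$ as $t\to\infty$ follows by dominated convergence, since $g(u,t)\to0$ and the $u$-integral is dominated.

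\textbf{Part (c).} Note that $t\partial_t g=-\frac{t^2}{2u}g$, and $\int_0^\infty\frac{t^2}{2u^2}e^{-t^2/4u}du=2$. Thus $t\partial_t u_f^\lambda=-\int_0^\infty W_u^\lambda f\,\frac{t^2}{2u^2}e^{-t^2/4u}\,du$, which is an approximate identity concentrating at $u\sim t^2$. It therefore tends to $-2\lim_{u\to0}W_u^\lambda f=-2f$ in $L^1_{\rm loc}$. The statement of (c) has $-2f$ as the limit but then writes $+f$ in the norm; I would prove the limit $-2f$ and read the norm display as containing $+2f$. The limit of $u_f^\lambda/\log t$ then follows by l'Hôpital in $t$, or by integrating $t\partial_t$ from $t$ to $1$.

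\textbf{Parts (d) and (e).} This is the main step. I write
\[
u_f^\lambda(x,t)+f(x)\log t=\int_0^\infty \bigl(f(y)-\mathcal X_{B(x,1)}(y)f(x)\bigr)P_t^\lambda(x,y)\,dy+f(x)\Bigl(\int_{B(x,1)}P_t^\lambda(x,y)\,dy+\log t\Bigr).
\]
The first term converges, by Dini or Lipschitz continuity and dominated convergence, to $\int_0^\infty (f(y)-\mathcal X_{B(x,1)}(y)f(x))\int_0^\infty W_u^\lambda(x,y)\frac{du}{u}\,dy$. This is minus the principal part of $\log B_\lambda f(x)$ in Theorem~\ref{thm: Th1.2} with $m=1$, since $\partial_s(1/\Gamma(1-s))|_{s=0}=-\gamma$. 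For the second term, split $P_t^\lambda=P_t+(P_t^\lambda-P_t)$. The Euclidean part is computed explicitly:
\[
\int_{B(x,1)}\frac{2}{\sqrt{(x-y)^2+t^2}}\,dy+\log t .
\]
When $x\ge1$ this tends to $2\log2$. When $x<1$ the ball is truncated at $0$, which produces the terms $\log x$, $\mathcal X_{(0,1)}(x)\log x$ and, via the substitutions $r=(x-y)/t$, the constants $q$ and $\widetilde q$. The difference part converges to the $\mathcal K$ integrals involving $W_u^\lambda-W_u$. The remaining $f(x)$-terms of Theorem~\ref{thm: Th1.2} are rearranged to match $\mathcal K$ by using $\int_0^\infty W_u(x-y)\,dy$ and the identity $\int_0^1 -\int_1^\infty$ bookkeeping.

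The hard part is this constant bookkeeping: matching the corrector of Theorem~\ref{thm: Th1.2} with $\mathcal K$ exactly, and proving the absolute convergence of $\int(W_u^\lambda-W_u)/u$ on the indicated regions, which uses \eqref{eq: heat Bessel t>xy} and \eqref{eq: heat Bessel comparison t<xy}. For the distributional statement, I pair with $\phi$, use the symmetry $P_t^\lambda(x,y)=P_t^\lambda(y,x)$ to move the operator onto $\phi$, and apply part (e) to $\phi$ together with uniform Dini bounds, which justify dominated convergence in $x$.
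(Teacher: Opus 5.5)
Parts (a)--(c) follow the paper's route: domination of $P_t^\lambda$ through \eqref{eq: heat Bessel FeynmanKac}, the identity $(\partial_t^2+\frac1t\partial_t)(e^{-t^2/(4u)}/u)=\partial_u(e^{-t^2/(4u)}/u)$, and integration by parts in $u$, where your sign $\partial_uW_u^\lambda=-B_{\lambda,x}W_u^\lambda$ is the right one. In (c), your approximate-identity argument in $u$ is a legitimate substitute for the paper's spatial splitting plus \cite[(3.22)]{ChHW}, once you justify $\sup_{s>0}\|W_s^\lambda f\|_{L^1(K)}<\infty$ and $W_s^\lambda f\to f$ in $L^1_{\rm loc}$.

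The problems are in (d)--(e). First, two slips make your displayed limit false.
\begin{enumerate}
\item $W_u(z)/u=e^{-z^2/(4u)}/(2\sqrt{\pi}\,u^{3/2})$, so $P_t(z)=(z^2+t^2)^{-1/2}$, as in the paper, not $2(z^2+t^2)^{-1/2}$.
\item Since $u_f^\lambda\sim-2f\log t$ by your own part (c), the quantity with a finite limit is $u_f^\lambda+2f(x)\log t$. The statement's $f(x)\log t$ is the same typo you spotted in (c), and the paper's proof uses $2f(x)\log t$.
\end{enumerate}
As written, $\int_{B(x,1)}2((x-y)^2+t^2)^{-1/2}dy+\log t\sim-3\log t$ diverges. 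Correctly, $\int_{B(x,1)}((x-y)^2+t^2)^{-1/2}dy+2\log t\to2\log2+\mathcal X_{(0,1)}(x)\log x$. The truncation at $0$ produces only the term $\mathcal X_{(0,1)}(x)\log x$. If you split at $|x-y|=t$, the annulus contributes $q$ and the inner ball $B(x,t)$ contributes $2\log(1+\sqrt2)$, which is half the stated $\widetilde q$; consistently, $q+2\log(1+\sqrt2)=2\log2$. So this computation does not produce ``$q$ and $\widetilde q$'' as stated.

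Second, and decisively, the step where the remaining $f(x)$-terms ``are rearranged to match $\mathcal K$'' is asserted, not performed, and it cannot be performed. Subtract the case $m=1$ of Theorem~\ref{thm: Th1.2} (with $\partial_s(1/\Gamma(1-s))|_{s=0}=-\gamma$) from your limit. All $W_u^\lambda$-terms cancel in $W_u^\lambda-W_u$ pairs, and you get
\begin{align*}
(\log B_\lambda)f(x)&=-\lim_{t\to0^+}\bigl(u_f^\lambda(x,t)+2f(x)\log t\bigr)+f(x)\Bigl(\int_{(0,\infty)\setminus B(x,1)}\int_0^1\frac{W_u(x-y)}{u}\,du\,dy\\
&\quad-\int_{B(x,1)}\int_1^\infty\frac{W_u(x-y)}{u}\,du\,dy-\gamma+2\log2+\mathcal X_{(0,1)}(x)\log x\Bigr).
\end{align*}
This is exactly where the paper's own proof stops, with $q+\widetilde q$ in place of $2\log2$. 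Neither you nor the paper reconciles this bracket with the displayed $\mathcal K$, and they differ:
\begin{enumerate}
\item The bracket is bounded for $x\ge1$, since both double integrals are dominated by their full-line Euclidean analogues.
\item In the displayed $\mathcal K(x)$, the two $W_u^\lambda-W_u$ integrals tend to $0$ as $x\to\infty$ by \eqref{eq: heat Bessel FeynmanKac}--\eqref{eq: heat Bessel comparison t<xy}. This leaves $\gamma-2\log x+q+\widetilde q\to-\infty$.
\end{enumerate}

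An independent spectral check settles the constant. For $f\in C_c^\infty(0,\infty)$ one has $u_f^\lambda(\cdot,t)=h_\lambda\bigl(2K_0(ty)h_\lambda f\bigr)$, with $K_0$ the modified Bessel function of the second kind. Since $2K_0(ty)=-2\log t-2\log y+2\log2-2\gamma+o(1)$, it follows that $(\log B_\lambda)f=-\lim_{t\to0^+}(u_f^\lambda+2f\log t)+(2\log2-2\gamma)f$ in $L^2(0,\infty)$. For $x\ge1$ the bracket above equals $2\log2-2\gamma-\int_x^\infty\int_0^1 W_u(z)\frac{du}{u}\,dz$. The extra term comes from the representation in Theorem~\ref{thm: Th1.2}, whose derivation in \eqref{eq: decomposition mbbB lambda s} uses $\int_0^\infty W_t(x-y)\,dy=1$; that identity is false on the half-line.

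So (e) has to be stated with the corrector your computation actually produces. The distributional part (d) then follows from (e) applied to $\phi$ and the symmetry of $P_t^\lambda$, as you propose and as the paper does.
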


This paper is organized as follows. In Section~\ref{sec: fract powers} we obtain the representations of the operator $B_\lambda^s$, $s\in (-1,1)$, $s\neq 0$, by using the semigroup of operators $\{W_t^\lambda \}_{t>0}$. Theorems \ref{thm: Th1.1} and \ref{thm: Th1.2} are proved in Section~\ref{sec: proofs Th1.1 Th1.2}. Finally, a proof of Theorem~\ref{thm: Th1.3} is given in Section~\ref{sec: proof Th1.3} and in Section~\ref{sec: proof Th1.4} we prove Theorem~\ref{thm: Th1.4}.

Throughout this paper we always denote by $c$ and $C$ positive constants that can change in each occurrence.


\section{Fractional powers of Bessel operators}\label{sec: fract powers}

In this section we establish representations through the semigroup $\{W_t^\lambda \}_{t>0}$ for the fractional powers $B_\lambda^s$, $s\in (-1,1)$, $s\neq 0$, of $B_\lambda$.

Let $s>0$. We define the operators $\mathbb B_{\lambda ,s}$ and $\mathbb B_{\lambda ,-s}$ as follows:
\[
\mathbb B_{\lambda, s}f=\frac{1}{\Gamma (-s)}\int_0^\infty \frac{W_t^\lambda (f)-f}{t^{1+s}}dt,\quad f\in D(\mathbb B_{\lambda, s}),
\]
where
\[
D(\mathbb B_{\lambda, s})=\left\{f\in L^2(0,\infty): \int_0^\infty \frac{\|W_t^\lambda (f)-f\|_{L^2(0,\infty)}}{t^{1+s}}dt<\infty\right\},
\]
and 
\[
\mathbb B_{\lambda, -s}f=\frac{1}{\Gamma (s)}\int_0^\infty W_t^\lambda (f)t^{s-1}dt,\quad f\in D(\mathbb B_{\lambda, -s}),
\]
being
\[
D(\mathbb B_{\lambda, -s})=\left\{f\in L^2(0,\infty): \int_0^\infty \|W_t^\lambda (f)\|_{L^2(0,\infty)}t^{s-1}dt<\infty\right\}.
\]
Note that the function $F:[0,\infty)\to L^2(0,\infty)$ given by $F(u)=W_u^\lambda (f)$, with $f\in L^2(0,\infty)$, is continuous, because $\{W_t^\lambda \}_{t>0}$ is a $C_0$-semigroup in $L^2(0,\infty)$. Since $L^2(0,\infty)$ is a separable Banach space, according to Pettis' Theorem (\cite[p. 131]{Yo}), the function $F$ is strongly measurable. Hence, if $f\in D(\mathbb B_{\lambda ,s})$ (respectively, $D(\mathbb B_{\lambda ,-s})$) the function $t\in (0,\infty)\mapsto (W_t^\lambda (f)-f)/t\in L^2(0,\infty)$ (respectively, $t\in (0,\infty)\mapsto W_t^\lambda (f)t^{s-1}\in L^2(0,\infty)$) is $L^2(0,\infty)$-Bochner integrable in $(0,\infty)$. According to \cite[(3), p. 260]{Yo}, $B_\lambda^sf=\mathbb B_{\lambda ,s}f$, $f\in D(B_\lambda)$. 

We now establish relations between $B_\lambda^s$ and $B_\lambda^{-s}$ with $\mathbb B_{\lambda , s}$ and $\mathbb B_{\lambda , -s}$, respectively.

\begin{prop}\label{prop: Prop2.1}
  Let $\lambda >-1$ and $s_0\in (0,1)$. If $f\in D(B_\lambda^{s_0})$, then $f\in D(\mathbb B_{\lambda ,s})$ and $\mathbb B_{\lambda ,s}f=B_\lambda^sf$, $s\in (0,s_0)$.  
\end{prop}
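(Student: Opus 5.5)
Since $W_t^\lambda=h_\lambda(e^{-ty^2}h_\lambda(\cdot))$ and $h_\lambda$ is an isometric isomorphism of $L^2(0,\infty)$ (with $h_\lambda^{-1}=h_\lambda$), I will work entirely on the Hankel side. Write $g=h_\lambda f$. Then
\[
\|W_t^\lambda f-f\|_{L^2(0,\infty)}=\Big(\int_0^\infty |e^{-ty^2}-1|^2|g(y)|^2dy\Big)^{1/2},
\]
and the hypothesis $f\in D(B_\lambda^{s_0})$ means $y^{2s_0}g\in L^2(0,\infty)$. The plan has two steps. First, show $\int_0^\infty \|W_t^\lambda f-f\|_{L^2}t^{-1-s}dt<\infty$ for $0<s<s_0$, so that $f\in D(\mathbb B_{\lambda,s})$. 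Second, identify the Bochner integral by pairing with test functions, or equivalently by applying $h_\lambda$ and using Fubini.

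For the first step, split the $t$-integral at $t=1$. For $t\ge 1$ use $\|W_t^\lambda f-f\|\le 2\|f\|_{L^2}$, and $\int_1^\infty t^{-1-s}dt<\infty$. For $t<1$ use the elementary bound $|1-e^{-ty^2}|\le \min\{1,ty^2\}\le (ty^2)^{s_0}$, which gives $\|W_t^\lambda f-f\|\le t^{s_0}\|y^{2s_0}g\|_{L^2}$. Then $\int_0^1 t^{s_0-s-1}dt<\infty$ because $s<s_0$. This step needs $f\in L^2$ only to control large $t$, and $D(B_\lambda^{s_0})\subset L^2$ by definition. It also follows that $f\in D(B_\lambda^s)$, since $y^{2s}|g|\le |g|+y^{2s_0}|g|$.

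For the second step, $h_\lambda$ is a bounded linear operator on $L^2$, so it commutes with the Bochner integral:
\[
h_\lambda(\mathbb B_{\lambda,s}f)=\frac1{\Gamma(-s)}\int_0^\infty \frac{(e^{-ty^2}-1)g}{t^{1+s}}\,dt .
\]
The main technical point is identifying this $L^2$-valued integral with the pointwise integral in $y$. I would do this by pairing with $\psi\in L^2$ and applying Fubini. Absolute integrability of $|e^{-ty^2}-1||g(y)||\psi(y)|t^{-1-s}$ over $(0,\infty)^2$ follows from Cauchy–Schwarz and the first-step bound, because
\[
\int |e^{-ty^2}-1||g\psi|\,dy\le \|W_t^\lambda f-f\|\,\|\psi\| .
\]
Then the scalar identity
\[
\frac1{\Gamma(-s)}\int_0^\infty (e^{-ty^2}-1)t^{-1-s}dt=y^{2s},\qquad 0<s<1,
\]
obtained by the change of variables $u=ty^2$ and an integration by parts giving $\int_0^\infty(e^{-u}-1)u^{-1-s}du=\Gamma(-s)$, shows that $h_\lambda(\mathbb B_{\lambda,s}f)=y^{2s}g$ a.e. Applying $h_\lambda$ once more yields $\mathbb B_{\lambda,s}f=h_\lambda(y^{2s}h_\lambda f)=B_\lambda^sf$.

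I expect no serious obstacle. The only care needed is in justifying the interchange, which is handled through weak pairing and Fubini as above. This argument also avoids any restriction to $\lambda>-1/2$, since it uses only the $L^2$ isometry of $h_\lambda$, which is valid for $\lambda>-1$.
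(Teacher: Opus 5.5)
Your proposal is correct and follows essentially the same route as the paper: you use the Hankel-side isometry and split the $t$-integral at $t=1$, with $|e^{-ty^2}-1|\le (ty^2)^{s_0}$ for small $t$, and you finish with the scalar identity $\int_0^\infty (e^{-zt}-1)t^{-1-s}\,dt=z^s\Gamma(-s)$. The only difference is that you spell out, via weak pairing and Fubini, the interchange of $h_\lambda$ with the Bochner integral that the paper simply invokes as ``properties of the Bochner integrals.''
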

\begin{proof}
Let $s\in (0,s_0)$. Suppose that $f\in D(B_\lambda^{s_0})$. We have that $f\in L^2(0,\infty)$ and $y^{2{s_0}}h_\lambda (f)\in L^2(0,\infty)$. Then, $f\in D(B_\lambda^s)$. Since $h_\lambda $ is an isometry in $L^2(0,\infty)$ we get
\[
\|W_t^\lambda (f)-f\|_{L^2 (0,\infty)}=\|h_\lambda (W_t^\lambda (f))-h_\lambda (f)\|_{L^2(0,\infty)}=\|(e^{-ty^2}-1)h_\lambda (f)\|_{L^2(0,\infty)},\quad t>0.
\]
Then,
\begin{align*}
    \int_0^\infty \frac{\|W_t^\lambda (f)-f\|_{L^2(0,\infty)}}{t^{1+s}}dt&\leq C\left(\int_0^1\frac{\|(ty^2)^{s_0}h_\lambda (f)\|_{L^2(0,\infty)}}{t^{1+s}}dt+\int_1^\infty \frac{\|h_\lambda (f)\|_{L^2(0,\infty)}}{t^{1+s}}dt\right)\\
    &\leq C\left(\|y^{2s_0}h_\lambda (f)\|_{L^2(0,\infty)}+\|f\|_{L^2(0,\infty)}\right).
\end{align*}
It follows that $f\in D(\mathbb B_{\lambda ,s})$, and using properties of the Bochner integrals we get
\begin{align*}
\mathbb B_{\lambda ,s}f&=\frac{1}{\Gamma (-s)}\int_0^\infty \frac{W_t^\lambda (f)-f}{t^{1+s}}dt=\frac{1}{\Gamma (-s)}\int_0^\infty\frac{h_\lambda (e^{-y^2t}h_\lambda (f))-h_\lambda (h_\lambda (f))}{t^{1+s}}dt\\
    &=h_\lambda \left(\frac{1}{\Gamma (-s)}\int_0^\infty \frac{e^{-ty^2}-1}{t^{1+s}}dt\,h_\lambda (f)\right)=h_\lambda (y^{2s}h_\lambda (f))=B_\lambda^sf,
\end{align*}
because $\int_0^\infty \frac{e^{-zt}-1}{t^{1+s}}dt=z^s\Gamma (-s)$, $z>0$.
\end{proof}

\begin{prop}\label{prop: Prop2.2}
 Let $\lambda >-1$ and $s_0\in (0,1)$. If $f\in D(B_\lambda^{-s_0})$, then $f\in D(\mathbb B_{\lambda ,-s})$ and $\mathbb B_{\lambda ,-s}f=B_\lambda^{-s}f$, $s\in (0,s_0)$.     
\end{prop}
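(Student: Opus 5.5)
The plan mirrors the proof of Proposition~\ref{prop: Prop2.1}, working on the Hankel side, where $h_\lambda$ is an isometry of $L^2(0,\infty)$ that diagonalizes the semigroup. Fix $s\in(0,s_0)$ and $f\in D(B_\lambda^{-s_0})$, so that $f\in L^2(0,\infty)$ and $y^{-2s_0}h_\lambda(f)\in L^2(0,\infty)$.

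First I show $f\in D(B_\lambda^{-s})$. Split the integral defining $\|y^{-2s}h_\lambda(f)\|_{L^2}^2$ at $y=1$. For $y\ge 1$ use $y^{-2s}\le 1$, and for $y<1$ use $y^{-2s}\le y^{-2s_0}$. Hence
\[
\|y^{-2s}h_\lambda(f)\|_{L^2(0,\infty)}\le \|y^{-2s_0}h_\lambda(f)\|_{L^2(0,\infty)}+\|f\|_{L^2(0,\infty)}.
\]

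Next I check that $f\in D(\mathbb B_{\lambda,-s})$. By the isometry, $\|W_t^\lambda(f)\|_{L^2}=\|e^{-ty^2}h_\lambda(f)\|_{L^2}$.
\begin{itemize}
\item For $t\in(0,1)$, bound this by $\|f\|_{L^2}$. Since $t^{s-1}$ is integrable near $0$, this part of the integral is finite.
\item For $t\ge1$, write $e^{-ty^2}=(ty^2)^{s_0}e^{-ty^2}\,t^{-s_0}y^{-2s_0}$ and use $\sup_{z>0}z^{s_0}e^{-z}<\infty$. This gives $\|W_t^\lambda(f)\|_{L^2}\le Ct^{-s_0}\|y^{-2s_0}h_\lambda(f)\|_{L^2}$. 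Since $s<s_0$, the function $t^{s-1-s_0}$ is integrable on $(1,\infty)$.
\end{itemize}
Therefore $\int_0^\infty\|W_t^\lambda(f)\|_{L^2}t^{s-1}\,dt<\infty$. The integrand is strongly measurable by the Pettis argument recalled above, so the Bochner integral defining $\mathbb B_{\lambda,-s}f$ exists.

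Finally I identify $\mathbb B_{\lambda,-s}f$ with $B_\lambda^{-s}f$. The bounded operator $h_\lambda$ commutes with Bochner integration, and $h_\lambda\circ h_\lambda=I$. Hence
\[
h_\lambda(\mathbb B_{\lambda,-s}f)=\frac{1}{\Gamma(s)}\int_0^\infty e^{-ty^2}h_\lambda(f)\,t^{s-1}\,dt .
\]
It remains to evaluate this $L^2$-valued integral pointwise, which is the one genuinely delicate step. A Bochner integral in $L^2$ agrees a.e.\ with the pointwise integral when the latter converges absolutely. Here $\int_0^\infty e^{-ty^2}|h_\lambda f(y)|t^{s-1}\,dt=\Gamma(s)y^{-2s}|h_\lambda f(y)|$, which is finite a.e.\ and lies in $L^2$ by the first step. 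To justify the identification rigorously, pair the integral with an arbitrary $g\in L^2$ and apply Fubini, which is legitimate by the integrability just established. Using $\int_0^\infty e^{-zt}t^{s-1}\,dt=\Gamma(s)z^{-s}$ for $z>0$, we obtain $h_\lambda(\mathbb B_{\lambda,-s}f)=y^{-2s}h_\lambda(f)$. Applying $h_\lambda$ once more gives $\mathbb B_{\lambda,-s}f=B_\lambda^{-s}f$.
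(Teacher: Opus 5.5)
Your proposal is correct and follows the paper's proof essentially step by step. The shared steps are:
\begin{enumerate}
\item splitting the $t$-integral at $t=1$;
\item using the bound $e^{-ty^2}\le C(ty^2)^{-s_0}$ for $t\ge 1$;
\item pulling $h_\lambda$ through the Bochner integral and evaluating with $\int_0^\infty e^{-zt}t^{s-1}\,dt=\Gamma(s)z^{-s}$.
\end{enumerate}
You also justify two points the paper leaves implicit: the inclusion $D(B_\lambda^{-s_0})\subset D(B_\lambda^{-s})$, and the a.e.\ identification of the $L^2$-valued integral by pairing with $g\in L^2$ and applying Fubini.
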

\begin{proof}
    Let $s\in (0,s_0)$. Assume that $f\in D(B_\lambda^{-s_0})$. Then, $f\in D(B_\lambda^{-s})$. We get
\[
\|W_t^\lambda (f)\|_{L^2 (0,\infty)}=\|h_\lambda (e^{-ty^2}h_\lambda (f))\|_{L^2(0,\infty)}=\|e^{-ty^2}h_\lambda (f)\|_{L^2(0,\infty)},\quad t>0.
\]
It follows that
\begin{align*}
    \int_0^\infty \frac{\|W_t^\lambda (f)\|_{L^2(0,\infty)}}{t^{1-s}}dt&=\int_0^\infty \frac{\|e^{-ty^2}h_\lambda (f)\|_{L^2(0,\infty)}}{t^{1-s}}dt\\
    &\leq \|f\|_{L^2(0,\infty)}\int_0^1\frac{dt}{t^{1-s}}+C\int_1^\infty \frac{\|y^{-2s_0}h_\lambda (f)\|_{L^2(0,\infty)}}{t^{1-s+s_0}}dt\\
    &\leq C\Big(\|f\|_{L^2(0,\infty)}+\|y^{-2s_0}h_\lambda (f)\|_{L^2(0,\infty)}\Big).
\end{align*}
Then, $f\in D(\mathbb B_{\lambda ,-s})$ and 
\[
\mathbb B_{\lambda ,-s}f=\frac{1}{\Gamma (s)}\int_0^\infty \frac{W_t^\lambda (f)}{t^{1-s}}dt=h_\lambda \left(\frac{1}{\Gamma (s)}\int_0^\infty e^{-ty^2}t^{s-1}dt\,h_\lambda (f)\right)=h_\lambda (y^{-2s}h_\lambda (f))=B_\lambda^{-s}f.\qedhere
\]
\end{proof}
We now study the derivatives $\partial_s^m B_\lambda^s$ of $B_\lambda^s$ in $L^2(0,\infty)$ with $m\in \mathbb N$ and $s\in (-1,1)$, $s\neq 0$.

\begin{prop}\label{prop: Prop2.3}
    Let $\lambda >-1$, $s_0\in (0,1)$ and $m_0\in \mathbb N$. Suppose that $f\in D(B_\lambda^{s_0})\cap D(\log^{m_0}B_\lambda)$. Then
    \[
    \partial_s^m B_\lambda^sf=(\log^m B_\lambda)B_\lambda^sf,\quad s\in (0,s_0)\mbox{ and }m\in \mathbb N,\,m\leq m_0.
    \] 
 Here, $\partial_s$ is understood in $L^2(0,\infty)$.
\end{prop}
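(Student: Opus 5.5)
Since $h_\lambda$ is an isometric isomorphism of $L^2(0,\infty)$ and $h_\lambda\circ h_\lambda$ is the identity, it suffices to work on the Hankel side. We set $g=h_\lambda f$. The hypothesis $f\in D(B_\lambda^{s_0})\cap D(\log^{m_0}B_\lambda)$ means $g\in L^2$, $y^{2s_0}g\in L^2$ and $(2\log y)^{m_0}g\in L^2$. For $s\in(0,s_0)$ we have $h_\lambda(B_\lambda^sf)=y^{2s}g$. The claim then amounts to showing that the map $s\mapsto y^{2s}g$ from $(0,s_0)$ into $L^2(0,\infty)$ is $m$ times differentiable, with derivative $(2\log y)^m y^{2s}g$, for $m\le m_0$. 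Along the way we must check that $y^{2s}g\in D(\log^m B_\lambda)$, i.e.\ $(2\log y)^m y^{2s}g\in L^2$. Applying $h_\lambda$ then gives $\partial_s^mB_\lambda^sf=h_\lambda((2\log y)^m y^{2s}h_\lambda f)=(\log^mB_\lambda)B_\lambda^sf$.

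The first step is an integrability bound. For $0\le m\le m_0$ and $s$ in a compact subinterval $[a,b]\subset(0,s_0)$ we show that $|\log y|^m y^{2s}|g|\le C(|g|+y^{2s_0}|g|+|\log y|^{m_0}|g|)$. On $y\le 1$ we use $y^{2s}\le 1$ and $|\log y|^m\le 1+|\log y|^{m_0}$. On $y\ge1$ we use $|\log y|^m y^{2s}\le C_{\epsilon}y^{2s+\epsilon}\le C y^{2s_0}$ with $\epsilon=s_0-b$. Hence the dominating function $G=C(|g|+y^{2s_0}|g|+|\log y|^{m_0}|g|)\in L^2$ is uniform in $s\in[a,b]$. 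In particular every $(2\log y)^my^{2s}g$ lies in $L^2$.

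Next we argue by induction on $m$, proving differentiability of $\Phi_{m-1}(s)=(2\log y)^{m-1}y^{2s}g$ in $L^2$ with derivative $\Phi_m(s)$. Write
\[
\frac{\Phi_{m-1}(s+\eta)-\Phi_{m-1}(s)}{\eta}-\Phi_m(s)=(2\log y)^{m-1}y^{2s}g\left(\frac{y^{2\eta}-1}{\eta}-2\log y\right).
\]
Pointwise this tends to $0$ as $\eta\to0$. For domination, the mean value theorem gives $|(y^{2\eta}-1)/\eta|\le 2|\log y|\,y^{2|\eta|}$ for $y\ge1$ and $\le 2|\log y|$ for $y<1$. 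So for $|\eta|$ small, with $s\pm\eta$ still in $[a,b]$, the expression is bounded by $C|\log y|^m y^{2(s+|\eta|)}|g|\le G$. Dominated convergence in $L^2$ then yields the derivative.

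The main obstacle is this uniform domination near $y\to\infty$ and $y\to0$. It requires exactly the margin $s<s_0$ to absorb the logarithmic factors at infinity, and the hypothesis $(\log y)^{m_0}g\in L^2$ at zero. This is where $m\le m_0$ is used. Everything else is the bookkeeping of transferring through $h_\lambda$, as in Propositions~\ref{prop: Prop2.1} and \ref{prop: Prop2.2}.
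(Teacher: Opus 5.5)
Your proposal is correct and is essentially the paper's proof: you pass to the Hankel side using that $h_\lambda$ is an $L^2$-isometry, dominate $|\log y|^m y^{2s}|h_\lambda f|$ by a multiple of $(1+|\log y|^{m_0}+y^{2s_0})|h_\lambda f|$, and obtain each derivative by dominated convergence and induction on $m$. The only procedural difference is that the paper treats $h\to0^{+}$ and $h\to0^{-}$ separately, whereas your compact interval $[a,b]$ handles both at once.

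One small correction is needed. For $y<1$ and $\eta<0$ the bound $|(y^{2\eta}-1)/\eta|\le 2|\log y|$ is false, since the left side grows like $y^{-2|\eta|}/|\eta|$ as $y\to0$. The mean value theorem instead gives $|(y^{2\eta}-1)/\eta|=2|\log y|\,y^{2\xi}$ with $\xi$ between $0$ and $\eta$. Because $s+\xi\ge a>0$, you have $y^{2s}y^{2\xi}\le 1$ on $(0,1)$, so your domination by $G$ still holds.
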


\begin{proof}
    Let $s\in (0,s_0)$ and let $m\in \mathbb N$, $m \leq m_0$. Since $D( B_\lambda^{s_0})\subseteq D(B_\lambda^s)$, $f\in D(B_\lambda^s)$ and $B_\lambda^sf=h_\lambda (y^{2s}h_\lambda (f))$. Moreover, we have that
    \begin{align*}
        |\log y|^m y^{2s}&\leq C\left(|\log y|^m\mathcal X_{(0,\tfrac12)}(y)+y ^{2s_0}\mathcal X_{(\tfrac12,\infty)}(y)\right)\\
        &\leq C\left(|\log y|^{m_0}+y^{2s_0}\right),\quad y\in (0,\infty).
    \end{align*}
Then, $B_\lambda^sf\in D(\log^m B_\lambda)$.

By using the mean value theorem we get
\[
|y^{2h}-1|\leq C\left(1+y^{2(s_0-s)}\right)h\log y,\quad y\in (0,\infty)\mbox{ and }h\in (0,s_0-s).
\]
Since $B_\lambda^sf\in D(\log B_\lambda)$ and $B_\lambda^{s_0-s}f\in D(\log B_\lambda)$, by using the dominated convergence theorem we obtain
\[
\lim_{h\rightarrow 0^+}\int_0^\infty \left|\left(\frac{y^{2h}-1}{h}-2\log y\right)y^{2s}h_\lambda (f)(y)\right|^2dy=0.
\]
We also have that
\[
|y^{-2h}-1|\leq 2h(1+y^{-s})\log y,\quad y\in (0,\infty)\mbox{ and }h\in \big(0,\tfrac{s}{2}\big),
\]
and
\[
\lim_{h\rightarrow 0^-}\int_0^\infty \left|\left(\frac{y^{2h}-1}{h}-2\log y\right)y^{2s}h_\lambda (f)(y)\right|^2dy=0.
\]
Thus, we proved that
\[
\lim_{h\rightarrow 0}\frac{B_\lambda^{s+h}f-B_\lambda^sf}{h}=(\log B_\lambda)B_\lambda^sf,\quad \mbox{ in }L^2(0,\infty).
\]
By proceeding in a similar way we can see that if $m\in \mathbb N$, $m\leq m_0-1$, and $\partial_s^m B_\lambda^sf=(\log^m B_\lambda)B_\lambda^sf$, $0<s<s_0$, then $\partial_s^{m+1}B_\lambda^sf=(\log^{m+1}B_\lambda)B_\lambda^s f$, $0<s<s_0$.

Thus the proof is finished.
\end{proof}
By arguing as in the proof of the previous proposition we obtain the following result.

\begin{prop}\label{prop: Prop2.4}
    Let $\lambda >-1$, $s_0\in (0,1)$ and $m_0\in \mathbb N$. Suppose that $f\in D(B_\lambda^{-s_0})\cap D(\log^{m_0}B_\lambda)$. Then, 
    \[
    \partial_s^m B_\lambda^{-s}f=(-1)^m(\log^m B_\lambda)B_\lambda^{-s}f,\quad s\in (0,s_0)\mbox{ and }m\in \mathbb N,\,m\leq m_0,
    \] 
    where $\partial_s$ is understood in $L^2(0,\infty)$.
\end{prop}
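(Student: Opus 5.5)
The argument mirrors the proof of Proposition~\ref{prop: Prop2.3}. Everything moves to the Hankel side: $h_\lambda$ is an isometry of $L^2(0,\infty)$, and $B_\lambda^{-s}f=h_\lambda(y^{-2s}h_\lambda(f))$. So it suffices to show that, for $s\in(0,s_0)$,
\[
\lim_{h\to 0}\int_0^\infty\left|\left(\frac{y^{-2h}-1}{h}+2\log y\right)y^{-2s}h_\lambda(f)(y)\right|^2dy=0 .
\]
Then induct on $m$, applying the same computation with $y^{-2s}h_\lambda(f)$ replaced by $(-2\log y)^m y^{-2s}h_\lambda(f)$.

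\textbf{Step 1: membership in the domains.} Since $f\in D(B_\lambda^{-s_0})$, we have $y^{-2s_0}h_\lambda(f)\in L^2$. Since $f\in L^2$, also $h_\lambda(f)\in L^2$. Interpolating between $y\le 1$ and $y>1$ gives $y^{-2s}h_\lambda(f)\in L^2$ for $0\le s\le s_0$, so $f\in D(B_\lambda^{-s})$. For $m\le m_0$ we use the pointwise bound
\[
|\log y|^m y^{-2s}\le C\left(y^{-2s_0}\mathcal X_{(0,1/2)}(y)+|\log y|^{m_0}\mathcal X_{(1/2,\infty)}(y)\right),
\]
which holds because $|\log y|^m y^{-2s}\le Cy^{-2s_0}$ near $0$ when $s<s_0$, and $y^{-2s}\le C$ for $y\ge 1/2$. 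Combined with $f\in D(\log^{m_0}B_\lambda)$, this gives $B_\lambda^{-s}f\in D(\log^m B_\lambda)$.

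\textbf{Step 2: the difference quotient.} By the mean value theorem, for $|h|<\delta:=\min\{s,s_0-s\}/2$,
\[
\left|\frac{y^{-2h}-1}{h}\right|\le 2|\log y|\left(y^{-2\delta}+y^{2\delta}\right).
\]
Multiplying by $y^{-2s}|h_\lambda(f)(y)|$ yields a majorant. On $(0,1)$ it is bounded by $C|\log y|\,y^{-2s-2\delta}|h_\lambda(f)|\le C' y^{-2s_0}|h_\lambda(f)|\in L^2$, since $s+\delta<s_0$ and there is room to absorb the logarithm. On $(1,\infty)$ it is bounded by $C|\log y|\,y^{-2s+2\delta}|h_\lambda(f)|\le C|\log y|\,|h_\lambda(f)|$, and this lies in $L^2$ because $f\in D(\log B_\lambda)\subset$ (via $m_0\ge1$ and $f\in L^2$). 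The integrand converges pointwise to zero, so dominated convergence gives the limit above. Hence $\partial_s B_\lambda^{-s}f=-(\log B_\lambda)B_\lambda^{-s}f$ in $L^2$.

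\textbf{Step 3: induction.} Assume $\partial_s^mB_\lambda^{-s}f=(-1)^m h_\lambda\big((2\log y)^m y^{-2s}h_\lambda(f)\big)$ for some $m\le m_0-1$. Repeat Step 2 with the extra factor $|2\log y|^m$ in the majorant. The near-zero piece is still controlled by $y^{-2s_0}|h_\lambda f|$, since the powers of the logarithm are absorbed by the gap $s_0-s-\delta>0$. The piece on $(1,\infty)$ is controlled by $|\log y|^{m+1}|h_\lambda f|\le C(1+|\log y|^{m_0})|h_\lambda f|\in L^2$. This gives the formula for $m+1$.

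The only real point is the choice of $\delta$ in Step 2. It must leave slack below $s_0$ near $y=0$ and keep $y^{-2s+2\delta}$ bounded near infinity. This is where the asymmetry with Proposition~\ref{prop: Prop2.3} appears: there the roles of $0$ and $\infty$ are swapped, so the same argument goes through with the two regions interchanged.
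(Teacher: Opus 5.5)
Your proof is correct and follows the paper's route; the paper only says to argue as in Proposition~\ref{prop: Prop2.3}, which is what you do: pass to the Hankel side, dominate the difference quotient via the mean value theorem with slack $\delta$, apply dominated convergence, and induct on $m$. One small slip: in Step 1 the bound $|\log y|^m y^{-2s}\le C|\log y|^{m_0}$ on $(1/2,\infty)$ fails near $y=1$ when $m<m_0$, so replace it by $C(1+|\log y|^{m_0})$ as you do in Step 3 (the constant is covered by $h_\lambda(f)\in L^2$), or simply note that $|\log y|^m y^{-2s}$ is bounded on $(1/2,\infty)$ because $s>0$.
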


From Propositions \ref{prop: Prop2.3} and \ref{prop: Prop2.4} we can deduce the following property that can be established using the dominated convergence theorem.

\begin{cor}\label{cor: Cor2.5}
    Let $\lambda >-1$, $s_0\in (0,1)$, and $m_0\in \mathbb N$.
\begin{enumerate} 
    \item If $f\in D(B_\lambda^{s_0})\cap D(\log^{m_0}B_\lambda)$ then for every $m\in \mathbb N$, $m\leq m_0$,
\[
(\log^m B_\lambda)f=\lim_{s\rightarrow 0^+}\partial_s^m B_\lambda^sf,\quad \mbox{ in }L^2(0,\infty).
\]
\item If $f\in D(B_\lambda^{-s_0})\cap D(\log^{m_0}B_\lambda)$ then for every $m\in \mathbb N$, $m\leq m_0$,
\[
(\log^m B_\lambda)f=\lim_{s\rightarrow 0^+}(-1)^m\partial_s^m B_\lambda^{-s}f,\quad \mbox{ in }L^2(0,\infty).
\]
\end{enumerate}
\end{cor}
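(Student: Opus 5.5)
The plan is to work entirely on the Hankel transform side, using Proposition~\ref{prop: Prop2.3} and Proposition~\ref{prop: Prop2.4} for $s\in(0,s_0)$ and then letting $s\to0^+$. Since $h_\lambda$ is an isometry of $L^2(0,\infty)$ and $h_\lambda\circ h_\lambda$ is the identity, it suffices to prove
\[
\lim_{s\to0^+}\int_0^\infty \big|(2\log y)^m y^{2s}-(2\log y)^m\big|^2|h_\lambda(f)(y)|^2dy=0 .
\]
This is enough because $\partial_s^mB_\lambda^sf=(\log^mB_\lambda)B_\lambda^sf=h_\lambda\big((2\log y)^my^{2s}h_\lambda(f)\big)$ by Proposition~\ref{prop: Prop2.3} and the definition of the spectral multipliers.

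For part (a) I will apply dominated convergence. Pointwise, $y^{2s}\to1$ for every $y>0$, so the integrand tends to $0$. For the domination, take $s\in(0,s_0)$. When $y\le1$ we have $y^{2s}\le1$, so the integrand is bounded by $4|2\log y|^{2m}|h_\lambda f|^2$. When $y>1$ we have $y^{2s}\le y^{2s_0}$, and $|\log y|^m\le C_{m,s_0}(1+y^{2s_0})$. Hence the integrand is bounded by
\[
C\Big(|\log y|^{2m_0}\mathcal X_{(0,1)}(y)+1+y^{4s_0}\Big)|h_\lambda f(y)|^2 .
\]
Here I use $|\log y|^m\le|\log y|^{m_0}$ for $y\le e^{-1}$ and boundedness on $[e^{-1},1]$.

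The right-hand side is integrable for three reasons. The term $(\log y)^{m_0}h_\lambda f$ lies in $L^2$ because $f\in D(\log^{m_0}B_\lambda)$. The term $y^{2s_0}h_\lambda f$ lies in $L^2$ because $f\in D(B_\lambda^{s_0})$. Finally, $h_\lambda f$ itself lies in $L^2$.

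Part (b) follows the same scheme with Proposition~\ref{prop: Prop2.4}. There $(-1)^m\partial_s^mB_\lambda^{-s}f=h_\lambda\big((2\log y)^my^{-2s}h_\lambda f\big)$, and the roles of $0$ and $\infty$ are swapped. For $y\ge1$, $y^{-2s}\le1$, and we dominate by $|\log y|^{2m}|h_\lambda f|^2\le C(1+|\log y|^{2m_0})|h_\lambda f|^2$. For $y<1$, $y^{-2s}\le y^{-2s_0}$, and $|\log y|^m\le C(1+y^{-2s_0})$ there. The resulting dominating term involves $y^{-4s_0}|h_\lambda f|^2$, which is integrable since $f\in D(B_\lambda^{-s_0})$.

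The only delicate point is the domination bound: one must absorb the logarithmic factor into the power weights available from the two domain hypotheses. A crude approach would produce a weight like $y^{2s_0}|\log y|^m$, which is not controlled by the hypotheses. I will handle this by splitting at $y=1$. On the side where $y^{\pm2s}\le1$, the bound $|\log y|^m\le C(1+|\log y|^{m_0})$ suffices. On the other side, the power weight $y^{\pm 2s}$ is replaced by $y^{\pm 2s_0}$ and the logarithm is absorbed by $|\log y|^m\le C(1+y^{\pm 2s_0})$. Because that region is bounded away from $y=1$ only on one side, this should be checked carefully. If it fails, I would instead restrict to $s<s_0/2$ and absorb the logarithm into the remaining $y^{\pm s_0}$.
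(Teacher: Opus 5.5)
Your route is the paper's: Propositions~\ref{prop: Prop2.3} and~\ref{prop: Prop2.4} identify $\partial_s^m B_\lambda^{\pm s}f$ with $h_\lambda\big((\pm 1)^m(2\log y)^m y^{\pm 2s}h_\lambda f\big)$, and the corollary then follows by dominated convergence on the Hankel side.

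The domination you state first, however, does not follow from the two inequalities you cite. For $y>1$, combining $y^{2s}\le y^{2s_0}$ with $|\log y|^m\le C(1+y^{2s_0})$ only gives $|\log y|^{2m}(y^{2s}-1)^2\le C(1+y^{8s_0})$. The hypothesis $y^{2s_0}h_\lambda f\in L^2(0,\infty)$ does not control that weight. In fact no bound of the form $C(1+y^{4s_0})$ can hold uniformly for all $s\in(0,s_0)$, because the supremum over $s$ behaves like $|\log y|^{2m}y^{4s_0}$ as $y\to\infty$.

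So your fallback is not optional; it is the actual argument. For $s\in(0,s_0/2)$ and $y>1$ one has $|\log y|^m y^{2s}\le |\log y|^m y^{s_0}\le C_{m,s_0}\,y^{2s_0}$. Symmetrically, in part (b), $|\log y|^m y^{-2s}\le C_{m,s_0}\,y^{-2s_0}$ for $y<1$. This restriction costs nothing, since only the limit $s\to0^+$ matters.
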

We now establish asymptotic expansions for $B_\lambda^s$, $s\in (-1,1)$, $s\neq 0$, through logarithms of $B_\lambda$ in the $L^2(0,\infty)$-setting.

\begin{prop}\label{prop: Prop2.5}
  Let $\lambda >-1$, $s_0\in (0,1)$, $0<s<s_0$, and $m_0\in \mathbb N$.
  \begin{enumerate}
      \item \label{itm: Prop2.5 a} Suppose that $f\in D(B_\lambda^{s_0})\cap D(\log^{m_0+1}B_\lambda)$. Then,
      \[
      B_\lambda^s f=f+\sum_{j=1}^{m_0}\frac{s^j}{j!}(\log^jB_\lambda)f+o(s^{m_0}),\quad \mbox{ in }L^2(0,\infty),
      \]
      and
      \[
      \lim_{s\rightarrow 0^+}\frac{(m_0+1)!}{s^{m_0+1}}\left(B_\lambda^sf-f-\sum_{j=1}^{m_0}\frac{s^j}{j!}(\log^jB_\lambda)f\right)=(\log^{m_0+1}B_\lambda)f,\quad \mbox{ in }L^2(0,\infty).
      \]
      \item \label{itm: Prop2.5 b} Suppose that $f\in D(B_\lambda^{-s_0})\cap D(\log^{m_0+1}B_\lambda)$. Then,
      \[
      B_\lambda^{-s}f=f+\sum_{j=1}^{m_0}\frac{(-1)^js^j}{j!}(\log^jB_\lambda)f+o(s^{m_0}),\quad \mbox{ in }L^2(0,\infty),
      \]
      and
      \[
      \lim_{s\rightarrow 0^+}\frac{(m_0+1)!}{s^{m_0+1}}\left(B_\lambda^{-s}f-f-\sum_{j=1}^{m_0}\frac{(-1)^js^j}{j!}(\log^jB_\lambda)f\right)=(-1)^{m_0+1}(\log^{m_0+1}B_\lambda)f,\quad \mbox{ in }L^2(0,\infty).
      \]
  \end{enumerate}
\end{prop}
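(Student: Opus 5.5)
The plan is to work entirely on the Hankel transform side: $h_\lambda$ is an isometry of $L^2(0,\infty)$ and $h_\lambda^2=I$. Write $g=h_\lambda f$, $\ell(y)=2\log y$, and let $R_s$ be the remainder
\[
R_s(y)=\frac{(m_0+1)!}{s^{m_0+1}}\Big(y^{2s}-\sum_{j=0}^{m_0}\frac{s^j}{j!}\ell(y)^j\Big).
\]
Then the $L^2$ norm of the quantity in part \ref{itm: Prop2.5 a} equals $\|(R_s-\ell^{m_0+1})g\|_{L^2(0,\infty)}$. So the second statement of \ref{itm: Prop2.5 a} reduces to proving
\[
\int_0^\infty |R_s(y)-\ell(y)^{m_0+1}|^2|g(y)|^2\,dy\to 0,\qquad s\to0^+.
\]
The first statement, the $o(s^{m_0})$ expansion, follows at once. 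Multiplying the second statement by $s^{m_0+1}/(m_0+1)!$, the left side differs from the Taylor remainder by $O(s^{m_0+1})$ times the fixed function $(\log^{m_0+1}B_\lambda)f$, which is in particular $o(s^{m_0})$.

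For the convergence, note that $y^{2s}=e^{s\ell(y)}$. The pointwise limit $R_s(y)\to \ell(y)^{m_0+1}$ for each $y>0$ is the Taylor expansion of the exponential. For domination I will use Taylor's formula with Lagrange remainder in the variable $s$:
\[
R_s(y)=\ell(y)^{m_0+1}e^{\xi\ell(y)}\quad\text{for some }\xi=\xi(s,y)\in(0,s).
\]
This gives
\[
|R_s(y)|\le |\ell(y)|^{m_0+1}\max\{1,y^{2s}\}.
\]
On $(0,1]$ this is bounded by $|\ell|^{m_0+1}$. On $(1,\infty)$, for $s<s_0/2$, it is bounded by $|\ell(y)|^{m_0+1}y^{2s}\le C\,y^{2s_0}$ if we use the crude bound $|\log y|^{m_0+1}\le C y^{s_0}$ for $y>1$. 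Alternatively we can keep the bound $|\ell|^{m_0+1}(1+y^{2s_0})$ on $(1,\infty)$ and control it by $|\ell|^{m_0+1}+y^{2s_0}$ up to constants, after splitting at a large $y$.

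The dominating function is therefore $C(|\ell|^{m_0+1}+y^{2s_0})^2|g|^2$. It is integrable precisely because $f\in D(B_\lambda^{s_0})\cap D(\log^{m_0+1}B_\lambda)$, that is, $y^{2s_0}g\in L^2$ and $\ell^{m_0+1}g\in L^2$. Dominated convergence then concludes part \ref{itm: Prop2.5 a}.

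Part \ref{itm: Prop2.5 b} is identical with $y^{-2s}=e^{-s\ell(y)}$. The Lagrange form gives a bound by $|\ell|^{m_0+1}\max\{1,y^{-2s}\}$, and now the delicate region is $y\in(0,1)$. There we bound $|\ell|^{m_0+1}y^{-2s}\le C(|\ell|^{m_0+1}+y^{-2s_0})$, absorbing the logarithm into $y^{-(s_0-s)}$ when $s<s_0/2$. The hypothesis $f\in D(B_\lambda^{-s_0})$ supplies integrability, and the limit is $(-\ell)^{m_0+1}$, which produces the sign $(-1)^{m_0+1}$.

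The only real obstacle is producing a uniform-in-$s$ dominating function in the region where $y^{\pm2s}$ grows, together with the extra logarithmic factor. This is handled by restricting to $s<s_0/2$ and trading the logarithmic power for the margin $y^{\pm(s_0-2s)}$. Alternatively, one could iterate Propositions~\ref{prop: Prop2.3}/\ref{prop: Prop2.4} and Corollary~\ref{cor: Cor2.5} with Taylor's theorem for $L^2$-valued functions, but the direct multiplier argument avoids the domain issues at order $m_0+1$.
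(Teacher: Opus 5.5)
Your proof is correct and is essentially the paper's argument: pass to the Hankel side, write the remainder in Lagrange form $\ell^{m_0+1}y^{\pm 2\xi}$, dominate by $C(|\log y|^{m_0+1}+y^{\pm 2s_0})|h_\lambda f|$, and apply dominated convergence. Your restriction to $s<s_0/2$ actually makes this domination uniform, which the paper's bound (stated for all $s\in(0,s_0)$) glosses over. Do drop the parenthetical ``alternative'' of controlling $|\ell|^{m_0+1}(1+y^{2s_0})$ by $|\ell|^{m_0+1}+y^{2s_0}$: that bound fails as $y\to\infty$, and your main line does not need it.
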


\begin{proof}
    We prove \ref{itm: Prop2.5 a}. We have
    \[
     B_\lambda^sf-f-\sum_{j=1}^{m_0}\frac{s^j}{j!}(\log^jB_\lambda)f=h_\lambda \left(\left(y^{2s}
     -1-\sum_{j=1}^{m_0}\frac{s^j}{j!}(2\log y)^j\right)h_\lambda (f)\right),\quad s\in (0,s_0).
    \]
    Since $h_\lambda$ is an isometry on $L^2(0,\infty)$ it follows that
    \[
    \left\|B_\lambda^sf-f-\sum_{j=1}^{m_0}\frac{s^j}{j!}(\log^jB_\lambda)f\right\|_{L^2(0,\infty)}=\left\|\left(y^{2s}
     -1-\sum_{j=1}^{m_0}\frac{s^j}{j!}(2\log y)^j\right)h_\lambda (f)\right\|_{L^2(0,\infty)},\quad s\in (0,s_0).
    \]
 Using the mean value theorem we obtain, for every $s\in (0,s_0)$ and $y\in (0,\infty)$,
    \[
    y^{2s}-1-\sum_{j=1}^{m_0}\frac{s^j}{j!}(2\log y)^j=(2\log y)^{m_0+1}y^{2u}\frac{s^{m_0+1}}{(m_0+1)!},
    \]
     for a certain $u\in (0,s)$. Then,
     \[
     \left|y^{2s}-1-\sum_{j=1}^{m_0}\frac{s^j}{j!}(2\log y)^j\right|\leq C\left(|\log y|^{m_0+1}+y^{2s_0}\right)s^{m_0+1},\quad s\in (0,s_0)\mbox{ and }y\in (0,\infty).
     \]
     We get, for each $s\in (0,s_0)$,
     \[
     \left\|B_\lambda^sf-f-\sum_{j=1}^{m_0}\frac{s^j}{j!}(\log^jB_\lambda)f\right\|_{L^2(0,\infty)}\leq C\left(\|B_\lambda^{s_0}f\|_{L^2(0,\infty)}+\|(\log^{m_0+1}B_\lambda)f\|_{L^2(0,\infty)}\right)s^{m_0+1}.
     \]
     The dominated convergence theorem leads to 
     \begin{align*}
         \lim_{s\rightarrow 0^+}\left\|\frac{(m_0+1)!}{s^{m_0+1}}\left(B_\lambda^sf-f-\sum_{j=1}^{m_0}\frac{s^j}{j!}(\log^jB_\lambda)f\right)-(\log^{m_0+1}B_\lambda)f\right\|_{L^2(0,\infty)}\\
         &\hspace{-10cm}=\lim_{s\rightarrow 0^+}\left\|\left[\frac{(m_0+1)!}{s^{m_0+1}}\left(y^{2s}-1-\sum_{j=1}^{m_0}\frac{(2s)^j}{j!}(\log y)^j\right)-(2\log y)^{m_0+1}\right]h_\lambda (f)\right\|_{L^2(0,\infty)}=0.
     \end{align*}

     The property in \ref{itm: Prop2.5 b} can be established in a similar way.
\end{proof}


\section{Proofs of Theorems \ref{thm: Th1.1} and \ref{thm: Th1.2}}\label{sec: proofs Th1.1 Th1.2}

We now prove Theorems \ref{thm: Th1.1} and \ref{thm: Th1.2}. Actually we will establish a little bit more general representation than those ones stated in Theorems \ref{thm: Th1.1} and \ref{thm: Th1.2}.

According to Proposition \ref{prop: Prop2.1}, $B_\lambda^sf=\mathbb B_{\lambda ,s}f$, where
\[
\mathbb B_{\lambda ,s}f(x)=\frac{1}{\Gamma (-s)}\int_0^\infty \frac{W_t^\lambda (f)(x)-f(x)}{t^{1+s}}dt,\quad a.e.\quad x\in (0,\infty)\mbox{ and }s\in (0,1).
\]
The integral above is absolutely convergent for every $x\in (0,\infty)$ and $s\in (0,1)$. Indeed, we decompose the integral defining $\mathbb B_{\lambda ,s}$ as follows:
\begin{align}\label{eq: decomposition mbbB lambda s}
   \int_0^\infty &\frac{W_t^\lambda (f)(x)-W_t^\lambda (1)(x)f(x)}{t^{1+s}}dt+f(x)\int_0^\infty \frac{W_t^\lambda (1)(x)-1}{t^{1+s}}dt\nonumber\\
    &= \int_{B(x,1)}(f(y)-f(x))\int_0^\infty  \frac{W_t^\lambda (x,y)}{t^{1+s}}dtdy+\int_{(0,\infty)\setminus B(x,1)}f(y)\int_0^\infty \frac{W_t^\lambda (x,y)}{t^{1+s}}dtdy\nonumber\\
    &\quad +f(x)\left[-\int_{(0,\infty)\setminus B(x,1)}\int_0^\infty \frac{W_t^\lambda (x,y)}{t^{1+s}}dtdy+\int_0^\infty \int_{x^2}^\infty \frac{W_t^\lambda (x,y)}{t^{1+s}}dtdy-\int_{x^2}^\infty \frac{dt}{t^{1+s}}\right.\nonumber\\
    &\quad \left.+\left(\int_0^{x/2}\int_0^{x^2}+\int_{2x}^\infty \int_0^{x^2}+\int_{x/2}^{2x}\int_0^{x^2}\right)\frac{W_t^\lambda (x,y)-W_t(x-y)}{t^{1+s}}dtdy\right]\nonumber\\
     &=\int_{B(x,1)}(f(y)-f(x))\int_0^\infty  \frac{W_t^\lambda (x,y)}{t^{1+s}}dtdy+\int_{(0,\infty)\setminus B(x,1)}f(y)\int_0^\infty \frac{W_t^\lambda (x,y)}{t^{1+s}}dtdy\nonumber\\
    &\quad +f(x)\left[-\int_{(0,\infty)\setminus B(x,1)}\int_0^{x^2} \frac{W_t^\lambda (x,y)}{t^{1+s}}dtdy+\int_{B(x,1)}\int_{x^2}^\infty \frac{W_t^\lambda (x,y)}{t^{1+s}}dtdy\right.\nonumber\\
    &\quad \left.+\left(\int_0^{x/2}\int_0^{x^2}+\int_{2x}^\infty \int_0^{x^2}+\int_{x/2}^{2x}\int_0^{x^2}\right)\frac{W_t^\lambda (x,y)-W_t(x-y)}{t^{1+s}}dtdy\right]-\frac{x^{-2s}}{s}f(x)\nonumber\\
    &:=A_1(x,s)+A_2(x,s)+f(x)\sum_{j=3}^7 A_j(x,s)-\frac{x^{-2s}}{s}f(x)
\end{align}
for $x\in (0,\infty)$  and $s\in \left(0,\tfrac{1}{4}\right).$ 

Our objective is to see that each of the above integrals $A_j(x,s)$ is absolutely convergent and defines a smooth function in $s\in (0,\tfrac14)$, for every $x\in (0,\infty)$. We obtain estimates for their derivatives as well.

Let $m\in \mathbb N_0$. We have that
\begin{equation}\label{eq: bound derivative power -1-s}
|\partial_s^m(t^{-1-s})|=|(\log t)^m t^{-1-s}|\leq |\log t|^m\left\{\begin{array}{ll}
t^{-1},&t\geq 1,\\
t^{-1-s},&t\in (0,1),
\end{array}
\right.,\quad \mbox{ and }s\in (0,1).
\end{equation}
We now study the corresponding integrals in \eqref{eq: decomposition mbbB lambda s} by using \eqref{eq: bound derivative power -1-s}.

 According to \eqref{eq: heat Bessel FeynmanKac}, since $f\in {\textup{Lip}}^1(0,\infty)$ we obtain
\begin{align}\label{eq: proof Thm1.1 first term}
    |\partial_s^mA_1(x,s)|&\leq \int_{B(x,1)}|f(y)-f(x)|\int_0^\infty W_t^\lambda (x,y) |\partial_s^m(t^{-1-s})|dtdy\nonumber\\
    &\leq C\int_{B(x,1)}|y-x|\left(\int_0^1\frac{e^{-c\frac{|x-y|^2}{t}}}{t^{s+3/2}}|\log t|^m dt+\int_1^\infty \frac{e^{-c\frac{|x-y|^2}{t}}}{t^{3/2}}|\log t|^m dt\right)dy\nonumber\\
    &\leq C\int_{B(x,1)}|y-x| \left(\int_0^\infty\frac{e^{-c\frac{|x-y|^2}{t}}}{t^{7 /4+\varepsilon }}dt+\int_1^\infty \frac{dt}{t^{3/2-\varepsilon}}\right)dy\nonumber\\
    &\leq C\int_{B(x,1)}|y-x|\left(\frac{1}{|x-y|^{3 /2+2\varepsilon}}+1\right)dy\leq C,\quad x\in (0,\infty)\mbox{ and }s\in \left(0,\tfrac{1}{4}\right),
\end{align}
where $\varepsilon \in (0,\tfrac14)$.

Since $f\in C_c^\infty(0,\infty)$, \eqref{eq: heat Bessel FeynmanKac} leads to
\begin{align}\label{eq: proof Thm1.1 second term}
    |\partial_s^mA_2(x,s)|&\leq \int_{(0,\infty) \setminus B(x,1)}|f(y)|\int_0^\infty W_t^\lambda (x,y) |\partial_s^m(t^{-1-s})|dtdy\nonumber\\
    &\leq C\int_{(0,\infty) \setminus B(x,1)}|f(y)|\left(\int_0^1\frac{e^{-c\frac{|x-y|^2}{t}}}{t^{s+3/2+\varepsilon }}dt+\int_1^\infty \frac{e^{-c\frac{|x-y|^2}{t}}}{t^{3/2-\varepsilon}}dt\right)dy\nonumber\\
    &\leq C\int_{(0,\infty) \setminus B(x,1)}|f(y)| \left(\int_0^\infty\frac{e^{-c\frac{|x-y|^2}{t}}}{t^{2+\varepsilon }}dt+\int_1^\infty \frac{dt}{t^{3/2-\varepsilon}}\right)dy\nonumber\\
    &\leq C\int_{(0,\infty) \setminus B(x,1)}|f(y)|\left(\frac{1}{|x-y|^{2+2\varepsilon}}+1\right)dy\nonumber\\
    &\leq C\|f\|_{L^1(0,\infty)},\quad x\in (0,\infty)\mbox{ and }s\in (0,\tfrac12),
\end{align}
with $\varepsilon \in (0,\tfrac12)$.

 By using \eqref{eq: heat Bessel FeynmanKac} we get
\begin{align}\label{eq: proof Thm1.1 third term}
    |\partial_s^mA_3(x,s)|&\leq \int_{(0,\infty) \setminus B(x,1)}\int_0^{x^2} W_t^\lambda (x,y) |\partial_s^m (t^{-1-s})|dtdy\nonumber\\
    &\leq C\int_{(0,\infty) \setminus B(x,1)}\left(\int_0^1\frac{e^{-c\frac{|x-y|^2}{t}}}{t^{s+3/2+\varepsilon }}dt+\mathcal X_{(1,\infty)}(x)\int_1^{x^2}\frac{e^{-c\frac{|x-y|^2}{t}}}{t^{3/2-\varepsilon}}dt\right)dy\nonumber\\
    &\leq C\int_{(0,\infty) \setminus B(x,1)} \left(\int_0^1\frac{e^{-c\frac{|x-y|^2}{t}}}{t^{2+\varepsilon }}dt+\mathcal X_{(1,\infty)}(x)e^{-c\frac{|x-y|^2}{x^2}}\int_1^\infty \frac{dt}{t^{3/2-\varepsilon}}\right)dy\nonumber\\
    &\leq C\int_{(0,\infty) \setminus B(x,1)}\left(\frac{1}{|x-y|^{2+2\varepsilon}}+e^{-c\frac{|x-y|^2}{x^2}}\right)dy\nonumber\\
    &\leq C\int_1^\infty \left(\frac{1}{\rho ^{2+2\varepsilon}}+e^{-c\frac{\rho ^2}{x^2}}\right)d\rho\nonumber\\
    &\leq C(1+x),\quad x\in (0,\infty)\mbox{ and }s\in (0,\tfrac12),
\end{align}
with $\varepsilon \in (0,\tfrac12)$.

  Equation \eqref{eq: heat Bessel FeynmanKac} implies that
\begin{align}\label{eq: proof Thm1.1 fourth term}
    |\partial_s^mA_4(x,s)|&\leq \int_{B(x,1)}\int_{x^2}^\infty W_t^\lambda (x,y) |\partial_s^m(t^{-1-s})|dtdy\nonumber\\
    &\leq C\int_{B(x,1)}\left(\int_1^\infty\frac{e^{-c\frac{|x-y|^2}{t}}}{t^{3/2-\varepsilon }}dt+\mathcal X_{(0,1)}(x)\int_{x^2}^1\frac{e^{-c\frac{|x-y|^2}{t}}}{t^{s+3/2+\varepsilon}}dt\right)dy\nonumber\\
    &\leq C\int_{B(x,1)} \left(\int_1^\infty \frac{dt}{t^{3/2-\varepsilon }}dt+\mathcal X_{(0,1)}(x)\int_{x^2}^\infty \frac{dt}{t^{2+\varepsilon}}\right)dy\nonumber\\
    &\leq C\left(1+x^{-2(1+\varepsilon)}\right),\quad x\in (0,\infty)\mbox{ and }s\in (0,\tfrac12),
\end{align}
with $\varepsilon \in (0,\tfrac12)$.

 By using \eqref{eq: heat Bessel FeynmanKac} we obtain
\begin{align}\label{eq: proof Thm1.1 fifth term}
    |\partial_s^mA_5(x,s)|& \leq \int_0^{x/2}\int_0^{x^2} |W_t^\lambda (x,y) -W_t(x-y)||\partial_s^m (t^{-1-s})|dtdy\nonumber\\
    &\leq C\int_0^{x/2}\left(\int_0^1\frac{e^{-c\frac{|x-y|^2}{t}}}{t^{s+3/2+\varepsilon }}dt+\mathcal X_{(1,\infty)}(x)\int_1^{x^2}\frac{e^{-c\frac{|x-y|^2}{t}}}{t^{3/2-\varepsilon}}dt\right)dy\nonumber\\
    &\leq C \int_0^{x/2} \left(\int_0^1\frac{e^{-c\frac{x^2}{t}}}{t^{2+\varepsilon }}dt+\int_1^\infty \frac{dt}{t^{3/2-\varepsilon}}\right)dy\nonumber\\
    &\leq C\int_0^{x/2}\left(\frac{1}{x^{2+2\varepsilon}}+1\right)dy\nonumber\\
    &\leq C\left(\frac{1}{x^{1+2\varepsilon}}+x\right),\quad x\in (0,\infty)\mbox{ and }s\in (0,\tfrac12),
\end{align}
with $\varepsilon\in (0,\tfrac12)$.

 Inequality  \eqref{eq: heat Bessel FeynmanKac} leads to
\begin{align}\label{eq: proof Thm1.1 sixth term}
    |\partial_s^mA_6(x,s)|&\leq \int_{2x}^\infty \int_0^{x^2} |W_t^\lambda (x,y) -W_t(x-y)||\partial_s^m(t^{-1-s})|dtdy\nonumber\\
    &\leq C\int_{2x}^\infty \left(\int_0^1\frac{e^{-c\frac{|x-y|^2}{t}}}{t^{s+3/2+\varepsilon }}dt+\mathcal X_{(1,\infty)}(x)\int_1^{x^2}\frac{e^{-c\frac{|x-y|^2}{t}}}{t^{3/2-\varepsilon}}dt\right)dy\nonumber\\
    &\leq C\int_{2x}^\infty \left(\int_0^\infty\frac{e^{-c\frac{y^2}{t}}}{t^{2+\varepsilon }}dt+e^{-c\frac{y^2}{x^2}}\right)dy\nonumber\\
    &\leq C\int_{2x}^\infty \left(\frac{1}{y^{2+2\varepsilon}}+e^{-c\frac{y^2}{x^2}}\right)dy\nonumber\\
    &\leq C\left(\frac{1}{x^{1+2\varepsilon}}+x\right),\quad x\in (0,\infty)\mbox{ and }s\in (0,\tfrac12),
\end{align}
with $\varepsilon \in (0,\tfrac12)$.

 By using \eqref{eq: heat Bessel FeynmanKac} and (\ref{eq: heat Bessel comparison t<xy}) we obtain
\begin{align}\label{eq: proof Thm1.1 last term}
    |\partial_s^mA_7(x,s)|&\leq \int_{x/2}^{2x}\int_0^{x^2} |W_t^\lambda (x,y) -W_t(x-y)||\partial_s^m (t^{-1-s})|dtdy\nonumber\\
    &\leq C\left[\int_{x/2}^{2x}\Bigg(\int_0^1\left(\frac{e^{-c\frac{x^2+y^2}{t}}}{\sqrt{t}}+\frac{\sqrt{t}}{xy}e^{-c\frac{|x-y|^2}{t}}\right)\frac{|\log t|^m}{t^{s+1}}dt\right.\nonumber\\
    &\quad +\left.\mathcal X_{(1,\infty)}(x)\int_1^{x^2}\left(\frac{e^{-c\frac{x^2+y^2}{t}}}{\sqrt{t}}dt+\frac{\sqrt{t}}{xy}e^{-c\frac{|x-y|^2}{t}}\right)\frac{|\log t|^m}{t}dt\Bigg)dy\right]\nonumber\\
    &\leq C\left[\int_{x/2}^{2x}\Bigg(\int_0^1\left(\frac{e^{-c\frac{x^2+y^2}{t}}}{t^{2+\varepsilon}}+\frac{1}{xyt^{3/4+\varepsilon}}e^{-c\frac{|x-y|^2}{t}}\right)dt\right.\nonumber\\
    &\quad +\chi_{(1,\infty)}(x)\left.\int_1^{x^2}\frac{1}{t^{1/2-\varepsilon}}\left(\frac{1}{t}+\frac{1}{xy}\right)dt\Bigg)dy\right]\nonumber\\
    &\leq C\int_{x/2}^{2x}\frac{1}{(x^2+y^2)^{1+\varepsilon}}+\frac{1}{xy}+1+x^{2\epsilon-1}dy\leq C\left(\frac{1}{x^{1+2\varepsilon}}+\frac{1}{x}+x+x^{2\epsilon}\right)\nonumber\\
    &\leq C\left(x+\frac{1}{x^{1+2\varepsilon}}\right),\quad x\in (0,\infty)\mbox{ and }s\in \left(0,\tfrac{1}{4}\right),
\end{align}
with $\varepsilon \in (0,\tfrac14)$.

Note that the constants $C>0$ in the above estimates do not depend on $x$ and $s$.

We now consider the function 
\[
F(t,s)=\frac{s}{\Gamma (1-s)}t^{-1-s},\quad t\in (0,\infty)\mbox{ and }s\in \left(0,\tfrac{1}{4}\right).
\]
Its derivatives can be written as follows
\begin{align*}
    \partial_s^m F(t,s)&=s\sum_{j=0}^m\binom{m}{j}\partial_s^{m-j}\left(\frac{1}{\Gamma (1-s)}\right)\partial_s^j(t^{-1-s})\\
    &\quad +m\sum_{j=0}^{m-1}\binom{m-1}{j}\partial_s^{m-1-j}\left(\frac{1}{\Gamma (1-s)}\right)\partial_s^j(t^{-1-s}),\quad t\in (0,\infty)\mbox{ and }s\in \left(0,\tfrac{1}{4}\right).
\end{align*}

Estimates \eqref{eq: proof Thm1.1 first term}-\eqref{eq: proof Thm1.1 last term}  justify differentiation under the integral sign. We get the following expression by means of $\partial_s^m F$
\begin{align*}
    \partial_s^m B_\lambda^sf(x)&=-\Bigg(\int_{B(x,1)}(f(y)-f(x))\int_0^\infty W_t^\lambda (x,y)\partial_s^m F(t,s)dtdy\\
    &\quad +\int_{(0,\infty)\setminus B(x,1)}f(y)\int_0^\infty W_t^\lambda (x,y)\partial_s^m F(t,s)dtdy\\
    &\quad +f(x)\left[-\int_{(0,\infty)\setminus B(x,1)}\int_0^{x^2}W_t^\lambda (x,y)\partial_s^m F(t,s)dtdy\right.\\
    &\quad +\int_{B(x,1)}\int_{x^2}^\infty W_t^\lambda (x,y)\partial_s^m F(t,s)dtdy\\
    &\quad \left.+\Bigg(\int_0^{x/2}\int_0^{x^2}+\int_{2x}^\infty \int_0^{x^2}+\int_{x/2}^{2x}\int_0^{x^2}\Bigg)(W_t^\lambda (x,y)-W_t(x-y))\partial_s^m F(t,s)dtdy\right]\\
    &\quad +\partial_s^m \Big(\frac{x^2}{s}F(x^2,s)\Big)f(x)\Bigg),\quad a.e. \quad x\in (0,\infty)\mbox{ and }s\in \left(0,\tfrac{1}{4}\right).
\end{align*}

In particular, we have that 
\begin{align*}
    B_\lambda^sf(x)&=-\frac{s}{\Gamma(1-s)}\Bigg[\int_{B(x,1)}(f(y)-f(x))\int_0^\infty \frac{W_t^\lambda (x,y)}{t^{1+s}}dtdy\\
    &\quad +\int_{(0,\infty)\setminus B(x,1)}f(y)\int_0^\infty \frac{W_t^\lambda (x,y)}{t^{1+s}}dtdy +f(x)\left(-\int_{(0,\infty)\setminus B(x,1)}\int_0^{x^2}\frac{W_t^\lambda (x,y)}{t^{1+s}}dtdy\right. \\
    &\quad +\int_{B(x,1)}\int_{x^2}^\infty \frac{W_t^\lambda (x,y)}{t^{1+s}}dtdy\left.+\int_0^\infty \int_0^{x^2}\frac{W_t^\lambda (x,y)-W_t(x-y)}{t^{1+s}}dtdy\right)\Bigg]\\
    &\quad +\frac{x^{-2s}}{\Gamma(1-s)}f(x),\quad a.e.\ x\in (0,\infty)\mbox{ and }s\in \left(0,\tfrac{1}{4}\right).
\end{align*}
Also, by breaking the $t$-integral $\int_0^\infty dt$ into the integrals $\int_0^1dt$ and $\int_1^\infty$ instead of $\int_0^{x^2}dt$ and $\int_{x^2}^\infty dt$ we obtain
\begin{align*}
    B_\lambda^sf(x)&=-\frac{s}{\Gamma(1-s)}\Bigg(\int_{B(x,1)}(f(y)-f(x))\int_0^\infty \frac{W_t^\lambda (x,y)}{t^{1+s}}dtdy\\
     &\quad +\int_{(0,\infty)\setminus B(x,1)}f(y)\int_0^\infty \frac{W_t^\lambda (x,y)}{t^{1+s}}dtdy\\
    &\quad +f(x)\left[-\int_{(0,\infty)\setminus B(x,1)}\int_0^{1}\frac{W_t^\lambda (x,y)}{t^{1+s}}dtdy\right.\\
    &\quad +\int_{B(x,1)}\int_{1}^\infty \frac{W_t^\lambda (x,y)}{t^{1+s}}dtdy\\
    &\quad \left.+\int_0^\infty \int_0^{1}\frac{W_t^\lambda (x,y)-W_t(x-y)}{t^{1+s}}dtdy\right]\Bigg)\\
    &\quad + \frac{f(x)}{\Gamma(1-s)},\quad a.e.\quad x\in (0,\infty)\mbox{ and }s\in \left(0,\tfrac{1}{4}\right),
\end{align*}
and property \ref{itm: Th1.1 a} in Theorem~\ref{thm: Th1.1} is proved.

According to Corollary \ref{cor: Cor2.5} we can find a sequence $\{s_\ell\}_{\ell\in \mathbb{N}}\subset (0,\infty)$ such that $\lim_{\ell\to \infty}s_\ell=0$ and $(\log^m B_\lambda)(f)(x)=\lim_{\ell\to\infty}\partial_{s}^m B_\lambda^s(f)(x)\Big|_{s=s_\ell}$, for a.e. $x\in (0,\infty).$

Estimates \eqref{eq: proof Thm1.1 first term}-\eqref{eq: proof Thm1.1 last term} also allow us to take limit as $s\rightarrow 0^+$ inside the integral and we obtain
\begin{align*}
    (\log^m B_\lambda)f(x)&=\lim_{\ell\rightarrow \infty}\partial_s^m B_\lambda^sf(x)\Big|_{s=s_\ell}=-m\sum_{j=0}^{m-1}\binom{m-1}{j}\partial_s^{m-1-j}\left(\frac{1}{\Gamma (1-s)}\right)\Bigg|_{s=0}(-1)^j\\
    &\quad \times \Bigg(\int_{B(x,1)}(f(y)-f(x))\int_0^\infty W_t^\lambda (x,y)\frac{(\log t)^j}{t}dtdy\\
    &\quad -\int_{(0,\infty)\setminus B(x,1)}f(y)\int_0^\infty W_t^\lambda (x,y)\frac{(\log t)^j}{t}dtdy\\
    &\quad +f(x)\left[-\int_{(0,\infty)\setminus B(x,1)}\int_0^{x^2}W_t^\lambda (x,y)\frac{(\log t)^j}{t}dtdy\right.\\
    &\quad +\int_{B(x,1)}\int_{x^2}^\infty W_t^\lambda (x,y)\frac{(\log t)^j}{t}dtdy\\
    &\quad \left.+\left(\int_0^{x/2}\int_0^{x^2}+\int_{2x}^\infty \int_0^{x^2}+\int_{x/2}^{2x}\int_0^{x^2}\right)(W_t^\lambda (x,y)-W_t(x-y))\frac{(\log t)^j}{t}dtdy\right]\Bigg)\\
    &\quad -\sum_{j=0}^m\binom{m}{j}\partial_s^{m-j}\left(\frac{1}{\Gamma (1-s)}\right)\Bigg|_{s=0}(-2)^j(\log x)^jf(x),\quad a.e. \quad  x\in (0,\infty).
\end{align*}
In particular, we have that for almost every $x\in (0,\infty)$,
\begin{align*}
    (\log B_\lambda)f(x)&=-\left(\int_{B(x,1)}(f(y)-f(x))\int_0^\infty \frac{W_t^\lambda (x,y)}{t}dtdy\right.\\
    &\quad -\int_{(0,\infty)\setminus B(x,1)}f(y)\int_0^\infty \frac{W_t^\lambda (x,y)}{t}dtdy\\
    &\quad +f(x)\left[-\int_{(0,\infty)\setminus B(x,1)}\int_0^{x^2}\frac{W_t^\lambda (x,y)}{t}dtdy+\int_{B(x,1)}\int_{x^2}^\infty \frac{W_t^\lambda (x,y)}{t}dtdy\right.\\
    &\quad \left.\left.+\int_0^\infty\int_0^{x^2}(W_t^\lambda (x,y)-W_t(x-y))\frac{dt}{t}dtdy\right]\right)-\Gamma '(1)f(x)-2(\log x )f(x).
\end{align*}
Thus, we obtain a representation of $(\log B_\lambda)f$ different from the one established in \cite[Theorem 2.4]{FLZ}. As it can be noted, our proof works for every $m\in \mathbb N$ and it is shorter than the one in \cite[Theorem 2.4]{FLZ}. Also, the representation in \cite[Theorem 2.4]{FLZ} for $(\log B_\lambda)f$ appears when the $t$-integrals are decomposed in $\int_0^1+\int_1^\infty$ instead of in $\int_0^{x^2}+\int_{x^2}^\infty$. For $(\log^m B_\lambda)f$, we can also obtain the following representation:
\begin{align}\label{eq: representation log m Bessel s=0}
    (\log^m B_\lambda)f(x)&=-m\sum_{j=0}^{m-1}\binom{m-1}{j}\partial_s^{m-1-j}\left(\frac{1}{\Gamma (1-s)}\right)\Bigg|_{s=0}(-1)^j\nonumber\\
    &\quad \times \Bigg(\int_{B(x,1)}(f(y)-f(x))\int_0^\infty W_t^\lambda (x,y)\frac{(\log t)^j}{t}dtdy\nonumber\\
    &\quad +\int_{(0,\infty)\setminus B(x,1)}f(y)\int_0^\infty W_t^\lambda (x,y)\frac{(\log t)^j}{t}dtdy\nonumber\\
    &\quad +f(x)\left[-\int_{(0,\infty)\setminus B(x,1)}\int_0^1W_t^\lambda (x,y)\frac{(\log t)^j}{j}dtdy\right.\nonumber\\
    &\quad +\int_{B(x,1)}\int_1^\infty W_t^\lambda (x,y)\frac{(\log t)^j}{t}dtdy\nonumber\\
    &\quad \left.+\int_0^\infty\int_0^1(W_t^\lambda (x,y)-W_t(x-y))\frac{(\log t)^j}{t}dtdy\right]\Bigg)\nonumber\\
    &\quad +\partial_s^{m}\left(\frac{1}{\Gamma (1-s)}\right)\Bigg|_{s=0}f(x),\quad a.e. \quad  x\in (0,\infty).
    \end{align}
We now consider the negative power of $B_\lambda$. We have that
\[
B_\lambda^{-s}f(x)=\frac{1}{\Gamma (s)}\int_0^\infty W_t^\lambda (f)(x)t^{s-1}dt,\quad a.e.\quad  x\in (0,\infty)\mbox{ and }s\in (0,1).
\]
We decompose $B_\lambda^{-s}$ as follows
\begin{align}\label{eq: decomposition Bessel negative power}
   B_\lambda^{-s}f(x)&=\frac{s}{\Gamma (1+s)}\left(\int_0^\infty (W_t^\lambda (f)(x)-W_t^\lambda (1)(x)f(x))t^{s-1}dt\right.\nonumber\\
    &\quad +\left.f(x)\left[\int_0^1(W_t^\lambda (1)(x)-W_t(1)(x))t^{s-1}dt +\int_1^\infty  W_t^\lambda (1)(x)t^{s-1}dt+\frac{1}{s}\right]\right)\nonumber\\
    &=\frac{s}{\Gamma (1+s)}\left(\int_{B(x,1)} (f(y)-f(x))\int_0^\infty W_t^\lambda (x,y)t^{s-1}dtdy\right.\nonumber\\
    &\quad +\int_{(0,\infty)\setminus B(x,1)}f(y)\int_0^\infty W_t^\lambda (x,y)t^{s-1}dtdy\nonumber\\
    &\quad +f(x)\left[\int_0^\infty \int_0^1(W_t^\lambda(x,y)-W_t(x-y))t^{s-1}dtdy\right.\nonumber\\
    &\quad -\int_{(0,\infty)\setminus B(x,1)}\int_0^1 W_t^\lambda (x,y)t^{s-1}dtdy\nonumber\\
    &\quad \left.\left.+\int_{B(x,1)}\int _1^\infty W_t^\lambda (x,y)t^{s-1}dtdy\right]\right)\nonumber\\
    &\quad +\frac{f(x)}{\Gamma (1+s)},\quad a.e.\ x\in (0,\infty)\mbox{ and }s\in (0,1).
\end{align}
Let $m\in \mathbb N$. We have that
\[
|\partial_s^m (t^{s-1})|=|(\log t)^m t^{s-1}|\leq |\log t|^m\left\{\begin{array}{ll}
t^{s-1},&t\geq 1,\\
t^{-1},&t\in (0,1),
\end{array}
\right.,\quad \mbox{ and }s\in (0,1).
\]
By proceeding as above we can see that
\begin{align*}
    \partial_s^m B_\lambda^{-s}f(x)&=-\left(\int_{B(x,1)}(f(y)-f(x))\int_0^\infty W_t^\lambda (x,y)\partial_s^m F(t,-s)dtdy\right.\\
    &\quad +\int_{(0,\infty)\setminus B(x,1)}f(y)\int_0^\infty W_t^\lambda (x,y)\partial_s^m F(t,-s)dtdy\\
    &\quad +f(x)\left[\int_0^\infty\int_0^1(W_t^\lambda (x,y)-W_t(x-y))\partial_s^m F(t,-s)dtdy\right.\\
    &\quad -\int_{(0, \infty)\setminus B(x,1)}\int_0^1 W_t^\lambda (x,y)\partial_s^m F(t,-s)dtdy\\
    &\quad \left.+\int_{B(x,1)}\int_1^\infty W_t^\lambda (x,y)\partial_s^m F(t,-s)dtdy\right]\\
    &\quad +\partial_s^m \left(\frac{1}{\Gamma (1+s)}\right)f(x),\quad a.e.\quad  x\in (0,\infty)\mbox{ and }s\in \left(0,\tfrac{1}{4}\right),
\end{align*}
and
\begin{equation}\label{eq: representation log^m negative s}
(\log^m B_\lambda)f(x)=(-1)^m\partial_s^m B_\lambda^{-s}(f)(x)\Big|_{s=0^+},\quad a.e.\quad x\in (0,\infty).    
\end{equation}

\begin{rem}\label{rem: log}
Suppose that $x\in (0,\infty)$ and  $f\in L^1\big((0,\infty),\frac{dy}{1+y}\big)\cap {\textup{Lip}}_{{\rm loc},x}(0,\infty)$, for a certain $\theta\in (0,1]$. We are going to see that the integrals in \ref{itm: Th1.1 a} and \ref{itm: Th1.1 b} of Theorem~\ref{thm: Th1.1} are absolutely convergent for every $m\in\mathbb{N}$ when $s\in (0,\tfrac{\theta}{4})$. Let $m\in \mathbb{N}$. By proceeding as in \eqref{eq: proof Thm1.1 first term} we obtain
\[
\int_{B(x,1)}|f(y)-f(x)|\int_0^\infty W_t^\lambda(x,y)|\partial_s^m (t^{-1-s})|dtdy\le C\|f\|_{{\textup{Lip}}_{{\rm loc},x}(0,\infty)},\quad s\in (0,\tfrac{\theta}{4}).
\]
By using (\ref{eq: heat Bessel FeynmanKac}) and (\ref{eq: heat Bessel t>xy}) we get with $0<\epsilon <\frac{\lambda}{2}+\frac{1}{4}$
\begin{align*}
&\int_0^\infty W_t^\lambda(x,y)\frac{|\log t|^m}{t^{1+s}}dt\\
&\le C\left(\int_0^1 e^{-c\frac{(x-y)^2}{t}}\frac{|\log t|^m}{t^{\frac{3}{2}+s}}dt+\int_1^{xy}e^{-c\frac{(x-y)^2}{t}}\frac{|\log t|^m}{t^{\frac{3}{2}}}dt+\int_{xy}^\infty e^{-c\frac{x^2+y^2}{t}}\frac{(xy)^{\lambda+\frac{1}{2}}|\log t|^m}{t^{\lambda+2}}dt\right)\\
&\le C\left(\int_0^1 \frac{e^{-c\frac{(x-y)^2}{t}}}{t^{2+\epsilon}}dt+\int_1^{xy}\frac{e^{-c\frac{(x-y)^2}{t}}}{t^{\frac{3}{2}-\epsilon}}dt+\int_{xy}^\infty\frac{ e^{-c\frac{x^2+y^2}{t}}}{t^{\lambda+2-\epsilon}}dt(xy)^{\lambda+\frac{1}{2}}\right)\\
&\le C\left(\frac{1}{|x-y|^{2+2\epsilon}}+\chi_{(2x,+\infty)}(y)\int_1^{xy}\frac{e^{-cy^2/xy}}{t^{\frac{3}{2}-\epsilon}}dt+\chi_{(0,2x)}(y)\int_1^\infty t^{\epsilon-\frac{1}{2}}dt+\frac{(xy)^{\lambda+\frac{1}{2}}}{(x+y)^{2\lambda+2-2\epsilon}}\right)\\
&\le C\left(\frac{1}{|x-y|^{2+2\epsilon}}+\chi_{(2x,+\infty)}(y){e^{-cy/x}}+\chi_{(0,2x)}(y)+\frac{x^{\lambda+\frac{1}{2}}}{(x+y)^{\lambda+\frac{3}{2}-2\epsilon}}\right),
\end{align*}
for every $y\in (0,\infty)$, $xy>1$ and $s\in (0,\frac{1}{2})$, and
\begin{align*}
&\int_0^\infty W_t^\lambda(x,y)\frac{|\log t|^m}{t^{1+s}}dt\\
&\le C\Bigg(\int_0^{xy} e^{-c\frac{(x-y)^2}{t}}\frac{|\log t|^m}{t^{\frac{3}{2}+s}}dt+\int_{xy}^{1}e^{-c\frac{x^2+y^2}{t}}\frac{|\log t|^m}{t^{\lambda+\frac{5}{2}}}dt(xy)^{\lambda+\frac{1}{2}}\\
&\quad +\int_{1}^\infty e^{-c\frac{x^2+y^2}{t}}\frac{(xy)^{\lambda+\frac{1}{2}}|\log t|^m}{t^{\lambda+2}}dt\Bigg)\\
&\le C\left(\int_0^{xy} \frac{e^{-c\frac{(x-y)^2}{t}}}{t^{2+\epsilon}}dt+\int_{xy}^1\frac{e^{-c\frac{x^2+y^2}{t}}}{t^{\lambda+\frac{5}{2}+\epsilon}}dt(xy)^{\lambda+\frac{1}{2}}+\int_{1}^\infty\frac{ e^{-c\frac{x^2+y^2}{t}}}{t^{\lambda+2-\epsilon}}dt(xy)^{\lambda+\frac{1}{2}}\right)\\
&\le C\left(\frac{1}{|x-y|^{2+2\epsilon}}+\frac{x^{\lambda+\frac{1}{2}}}{(x+y)^{\lambda+\frac{5}{2}+2\epsilon}}+\frac{x^{\lambda+1/2}}{(x+y)^{\lambda+\frac{3}{2}-2\epsilon}}\right),
\end{align*}
for every $y\in (0,\infty)$, $xy<1$ and $s\in (0,\frac{1}{2})$.

We have that
\begin{align*}
&\int_{(0,\infty)\setminus B(x,1)}|f(y)|\Bigg(\frac{1}{|x-y|^{2+2\epsilon}}+\frac{x^{\lambda+\frac{1}{2}}}{(x+y)^{\lambda+\frac{3}{2}-2\epsilon}}+\frac{x^{\lambda+\frac{1}{2}}}{(x+y)^{\lambda+\frac{3}{2}+2\epsilon}}\\
&\quad+\chi_{(2x,+\infty)}(y)e^{-cy/x}+\chi_{(0,2x)}(y)\Bigg)dy\\
&\le C\int_{(0,\infty)\setminus B(x,1)}|f(y)|\Bigg(\chi_{(0,2x)}(y)(1+x^{\lambda+\frac{1}{2}})+\chi_{(2x,+\infty)}(y)\frac{1+x^{2\epsilon}+x^{-1-2\epsilon}+x}{|x-y|}\Bigg)dy\\
&\le C\int_0^\infty\frac{|f(y)|}{1+y}dy\Big((1+x)(1+x^{\lambda+\frac{1}{2}})+1+x^{2\epsilon}+x^{-1-2\epsilon}+x\Big).
\end{align*}
Here $C>0$ does not depend on $x$.

Also by using dominated convergence as in the proofs of Theorems~\ref{thm: Th1.1} and \ref{thm: Th1.2} we obtain that
\begin{align*}
    \lim_{s\to 0^+}\partial_s^m \mathbb{B}_{\lambda,s}f(x)&=-m\sum_{j=0}^{m-1}\binom{m-1}{j}\partial_s^{m-1-j}\left(\frac{1}{\Gamma (1-s)}\right)\Bigg|_{s=0}(-1)^j\\
    &\quad \times \left(\int_{B(x,1)}(f(y)-f(x))\int_0^\infty W_t^\lambda (x,y)\frac{(\log t)^j}{t}dtdy\right.\\
    &\quad +\int_{(0,\infty)\setminus B(x,1)}f(y)\int_0^\infty W_t^\lambda (x,y)\frac{(\log t)^j}{t}dtdy\\
    &\quad +f(x)\left[-\int_{(0,\infty)\setminus B(x,1)}\int_0^1W_t^\lambda (x,y)\frac{(\log t)^j}{j}dtdy\right.\\
    &\quad +\int_{B(x,1)}\int_1^\infty W_t^\lambda (x,y)\frac{(\log t)^j}{j}dtdy\\
    &\quad \left.\left.+\int_0^\infty\int_0^1(W_t^\lambda (x,y)-W_t(x-y))\frac{(\log t)^j}{t}dtdy\right]\right)\\
    &\quad +\partial_s^{m}\left(\frac{1}{\Gamma (1-s)}\right)\Bigg|_{s=0}f(x),\quad a.e. \quad  x\in (0,\infty).
    \end{align*}

As it was mentioned this property justifies to define
\[
\log^m B_\lambda f(x)=\lim_{s\to 0^+}\partial_s^m \mathbb{B}_{\lambda,s}f(x),
\]
when $x\in (0,\infty)$, $m\in \mathbb{N}$ and $f\in L^1((0,\infty),\frac{dy}{1+y})\cap {\textup{Lip}}_{{\rm loc},x}(0,\infty)$, for a certain $\theta\in (0,1]$.

We also have that
\[
\lim_{s\to 0^+}\partial_s^m \mathbb{B}_{\lambda,s}f(x)=\lim_{s\to 0^+}(-1)^m\partial_s^m \mathbb{B}_{\lambda,-s}f(x),
\]
provided that $x\in (0,\infty)$, $m\in \mathbb{N}$ and $f\in L^1\big((0,\infty),\frac{dy}{1+y}\big)\cap {\textup{Lip}}_{{\rm loc},x}(0,\infty)$, for a certain $\theta\in (0,1]$.
\end{rem}


\section{Proof of Theorem~\ref{thm: Th1.3}}\label{sec: proof Th1.3}

We consider again the function
\[
F(t,s)=\frac{s}{\Gamma (1-s)}t^{-1-s},\quad t\in (0,\infty)\mbox{ and }s\in (0,1).
\]
Let $t\in (0,\infty)$, $s\in (0,1)$ and $m\in \mathbb N$. By using the mean value theorem we obtain, for certain $u\in (0,s)$,
\begin{align*}
    F(t,s)&=\sum_{j=1}^m\frac{\partial_v^jF(t,v)}{j!}\Bigg|_{v=0}s^j+\frac{\partial_v^{m+1}F(t,v)}{(m+1)!}\Bigg|_{v=u}s^{m+1}\\
    &=\sum_{j=1}^m\frac{s^j}{j!}j\sum_{k=0}^{j-1}\binom{j-1}{k}\partial_v^{j-1-k}\left(\frac{1}{\Gamma (1-v)}\right)\Bigg|_{v=0}(-1)^k\frac{(\log t)^k}{t}\\
    &\quad +\frac{s^{m+1}}{(m+1)!}\left(u\sum_{k=0}^{m+1}\binom{m+1}{k}
    \partial_v^{m+1-k}\left(\frac{1}{\Gamma (1-v)}\right)\Bigg|_{v=u}(-1)^k\frac{(\log t)^k}{t^{u+1}}\right.\\
    &\quad +\left.(m+1)\sum_{k=0}^m\binom{m}{k}\partial_v^{m-k}\left(\frac{1}{\Gamma (1-v)}\right)\Bigg|_{v=u}(-1)^k\frac{(\log t)^k}{t^{u+1}}\right).
\end{align*}
Then, for $s\in (0,\tfrac12)$,
\begin{align*}
    \left|F(t,s)-\sum_{j=1}^m\frac{s^j}{j!}j\sum_{k=0}^{j-1}\binom{j-1}{k}\partial_v^{j-1-k}\left(\frac{1}{\Gamma (1-v)}\right)\Bigg|_{v=0}(-1)^k\frac{(\log t)^k}{t}\right|&\\
    &\hspace{-3cm}\leq Cs^{m+1}(1+|\log t|^{m+1})\left\{\begin{array}{ll}
    t^{-1-s},&t\in (0,1),\\
    t^{-1},&t\in [1,\infty).
    \end{array}
    \right.
    \end{align*}
Here, $C>0$ does not depend on $(t,s)\in (0,\infty)\times (0,\tfrac12)$.

We also have that
\[
\frac{1}{\Gamma (1-s)}=1+\sum_{j=1}^m\frac{s^j}{j!}\partial_v^{j}\left(\frac{1}{\Gamma (1-v)}\right)\Bigg|_{v=0}+\frac{s^{m+1}}{(m+1)!}\partial_v^{m+1}\left(\frac{1}{\Gamma (1-v)}\right)\Bigg|_{v=u},
\]
for certain $u\in (0,s)$.

From \eqref{eq: decomposition mbbB lambda s} (by changing $\int_0^{x^2}$ and $\int_{x^2}^\infty$ by $\int_0^1$ and $\int_1^\infty$, respectively) and \eqref{eq: representation log m Bessel s=0} we deduce that, for every $x\in (0,\infty)$ and $s\in (0,\tfrac{\theta}{4})$,
\begin{align}\label{eq: estimate limit Blambda s positive}
\left|B_\lambda^sf(x)-\left(f(x)+\sum_{j=1}^m\frac{s^j}{j!}(\log^jB_\lambda)(f)(x)\right)\right|&\leq |f(x)|\left|\frac{1}{\Gamma (1-s)}-1-\sum_{j=1}^m\frac{s^j}{j!}\partial_v^{j}\left(\frac{1}{\Gamma (1-v)}\right)\Bigg|_{v=0}\right|\nonumber\\
&\hspace{-5cm}\quad +\int_{B(x,1)}|f(x)-f(y)|\int_0^\infty W_t^\lambda (x,y)\left|F(t,s)-\sum_{j=1}^m\frac{s^j}{j!}\partial_v^jF(t,v)\Bigg|_{v=0}\right|dtdy\nonumber\\
&\hspace{-5cm}\quad +\int_{(0,\infty)\setminus B(x,1)}|f(y)|\int_0^\infty W_t^\lambda (x,y)\left|F(t,s)-\sum_{j=1}^m\frac{s^j}{j!}\partial_v^jF(t,v)\Bigg|_{v=0}\right|dtdy\nonumber\\
&\hspace{-5cm}\quad +|f(x)|\left(\int_{(0,\infty)\setminus B(x,1)}\int_0^1 W_t^\lambda (x,y)\left|F(t,s)-\sum_{j=1}^m\frac{s^j}{j!}\partial_v^jF(t,v)\Bigg|_{v=0}\right|dtdy\right.\nonumber\\
&\hspace{-5cm}\quad +\int_{B(x,1)}\int_1^\infty W_t^\lambda (x,y)\left|F(t,s)-\sum_{j=1}^m\frac{s^j}{j!}\partial_v^jF(t,v)\Bigg|_{v=0}\right|dtdy\nonumber\\
&\hspace{-5cm}\quad +\left.\int_0^\infty\int_0^1|W_t^\lambda (x,y)-W_t(x-y)|\left|F(t,s)-\sum_{j=1}^m\frac{s^j}{j!}\partial_v^jF(t,v)\Bigg|_{v=0}\right|dtdy\right)\nonumber\\
&\hspace{-5cm} \leq Cs^{m+1}\Bigg[\int_{B(x,1)}|f(y)-f(x)|\Bigg(\int_0^1W_t^\lambda (x,y)(1+|\log t|^{m+1})\frac{dt}{t^{1+s}}\nonumber\\
&\hspace{-5cm}\quad +\int_1^\infty W_t^\lambda (x,y)(1+|\log t|^{m+1})\frac{dt}{t}\Bigg)dy\nonumber\\
&\hspace{-5cm} \quad +\int_{(0,\infty)\setminus B(x,1)}|f(y)|\Bigg(\int_0^1W_t^\lambda (x,y)(1+|\log t|^{m+1})\frac{dt}{t^{1+s}}\nonumber\\
&\hspace{-5cm}\quad +\int_1^\infty W_t^\lambda (x,y)(1+|\log t|^{m+1})\frac{dt}{t}\Bigg)dy\nonumber\\
&\hspace{-5cm}\quad +|f(x)|\left(1+\int_{(0,\infty)\setminus B(x,1)}\int_0^1 W_t^\lambda (x,y)(1+|\log t|^{m+1})\frac{dt}{t^{1+s}}dy\right.\nonumber\\
&\hspace{-5cm}\quad +\int_{B(x,1)}\int_1^\infty W_t^\lambda (x,y)(1+|\log t|^{m+1})\frac{dt}{t}dy\nonumber\\
&\hspace{-5cm}\quad +\left.\int_0^\infty\int_0^1|W_t^\lambda (x,y)-W_t(x-y)|(1+|\log t|^{m+1})\frac{dt}{t^{1+s}}dy\right)\Bigg].
\end{align}
By proceeding as in the proof of Theorems~\ref{thm: Th1.1} and \ref{thm: Th1.2} (see Remark~\ref{rem: log}) we can get that, for each $x\in (0,\infty)$ and $s\in (0,\tfrac{\theta}{4})$,
\begin{align*}
    \left|B_\lambda^sf(x)-\left(f(x)+\sum_{j=1}^m\frac{s^j}{j!}(\log^jB_\lambda)(f)(x)\right)\right|&\\
    &\hspace{-1cm}\leq C\left(\|f\|_{L^\infty (0,\infty)}+\|f\|_{L^1(0,\infty)}+\|f\|_{{\textup{Lip}}^\theta_{{\rm loc}, {\rm uni}}(0,\infty)}\right)s^{m+1}.
\end{align*}
Thus, we have proved that
\[
\lim_{s\rightarrow 0^+}\frac{(m+1)!}{s^{m+1}}\left(B_\lambda^sf-f-\sum_{j=1}^m\frac{s^j}{j!}(\log^jB_\lambda)(f)\right)=(\log^{m+1}B_\lambda)f,\quad \mbox{ in }L^\infty (0,\infty).
\]

We now consider $1<p<\infty$. According to the estimates in Section~\ref{sec: proofs Th1.1 Th1.2}, \eqref{eq: proof Thm1.1 first term}, we have that for each $x\in (0,\infty)$ and $s\in (0,\tfrac{\theta}{4})$
\begin{align}\label{4.0}
    \int_{B(x,1)}|f(x)-f(y)|\int_0^\infty W_t^\lambda (x,y)\left|F(t,s)-\sum_{j=1}^m\frac{s^j}{j!}\partial_v^jF(t,v)\Bigg|_{v=0}\right|dtdy&\leq Cs^{m+1}\|f\|_{{\textup{Lip}}^\theta _{{\rm loc},{\rm uni}}(0,\infty)}.
\end{align}
Also, the estimates obtained in \eqref{eq: proof Thm1.1 second term} in Section~\ref{sec: proofs Th1.1 Th1.2} lead to
\begin{align}\label{F}
    \int_{(0,\infty)\setminus B(x,1)}|f(y)|\int_0^\infty W_t^\lambda (x,y)\left|F(t,s)-\sum_{j=1}^m\frac{s^j}{j!}\partial_v^jF(t,v)\Bigg|_{v=0}\right|dtdy&\leq Cs^{m+1}\|f\|_{L^1(0,\infty)}.
\end{align}
for every $x\in (0,\infty)$ and $s\in (0,\tfrac12)$.

On the other hand, taking $R>1$ such that $\supp f\subset B(0,R)$ and using again \eqref{eq: proof Thm1.1 first term} and \eqref{eq: proof Thm1.1 second term} in Section~\ref{sec: proofs Th1.1 Th1.2} we get
\begin{align*}
    \int_{B(x,1)}|f(x)-f(y)|&\int_0^\infty W_t^\lambda (x,y)\left|F(t,s)-\sum_{j=1}^m\frac{s^j}{j!}\partial_v^jF(t,v)\Bigg|_{v=0}\right|dtdy\\
     &\quad +\int_{(0,\infty)\setminus B(x,1)}|f(y)|\int_0^\infty W_t^\lambda (x,y)\left|F(t,s)-\sum_{j=1}^m\frac{s^j}{j!}\partial_v^jF(t,v)\Bigg|_{v=0}\right|dtdy\\
     &\leq \int_{B(0,R)}|f(y)|\int_0^\infty W_t^\lambda (x,y)\left|F(t,s)-\sum_{j=1}^m\frac{s^j}{j!}\partial_v^jF(t,v)\Bigg|_{v=0}\right|dtdy\\
    &\leq Cs^{m+1}\|f\|_{L^\infty (0,\infty)}\int_{B(0,R)}\left(\frac{1}{|x-y|^{1-2\varepsilon}}+\frac{1}{|x-y|^{2+2\varepsilon }}\right)dy\\
    &\leq Cs^{m+1}\|f\|_{L^\infty (0,\infty)}\frac{R}{x^{1-2\varepsilon}},\quad x>2R\mbox{ and }s\in (0,\tfrac12),
\end{align*}
provided that $\varepsilon \in (0,\tfrac12)$.
Then
\begin{align}\label{G}
&\left\|\int_{B(x,1)}|f(x)-f(y)|\int_0^\infty W_t^\lambda (x,y)\left|F(t,s)-\sum_{j=1}^m\frac{s^j}{j!}\partial_v^jF(t,v)\Bigg|_{v=0}\right|dtdy\right.&\nonumber\\
     &\hspace{1.2cm}\quad \left.+\int_{(0,\infty)\setminus B(x,1)}|f(y)|\int_0^\infty W_t^\lambda (x,y)\left|F(t,s)-\sum_{j=1}^m\frac{s^j}{j!}\partial_v^jF(t,v)\Big|_{v=0}\right|dtdy\right\|_{L^p((0,\infty)\setminus B(0,2R))}\nonumber\\
      &\hspace{1.2cm}\leq Cs^{m+1}\|f\|_{L^\infty (0,\infty)}R^{1/p-2\varepsilon},
\end{align}
whenever $0<\varepsilon<\min\left\{\tfrac{p-1}{2p},\tfrac{1}{2}\right\}$ and $s\in (0,\tfrac12)$.

Combining \eqref{4.0}, \eqref{F}  and \eqref{G} we conclude that, for each $s\in (0,\tfrac{\theta}{4})$,
\begin{align*}
\left\|\int_{B(x,1)}|f(x)-f(y)|\int_0^\infty W_t^\lambda (x,y)\left|F(t,s)-\sum_{j=1}^m\frac{s^j}{j!}\partial_v^jF(t,v)\Bigg|_{v=0}\right|dtdy\right.&\\
     &\hspace{-10cm}\quad \left.+\int_{(0,\infty)\setminus B(x,1)}|f(y)|\int_0^\infty W_t^\lambda (x,y)\left|F(t,s)-\sum_{j=1}^m\frac{s^j}{j!}\partial_v^jF(t,v)\Bigg|_{v=0}\right|dtdy\right\|_{L^p((0,\infty)}\leq Cs^{m+1}.
\end{align*}
From \eqref{eq: estimate limit Blambda s positive}, by proceeding as in the estimates \eqref{eq: proof Thm1.1 third term}-\eqref{eq: proof Thm1.1 last term}  in Section~\ref{sec: proofs Th1.1 Th1.2}, it follows that
\[
\left\|B_\lambda^sf-\big(f+\sum_{j=1}^n\frac{s^j}{j!}(\log^jB_\lambda)f\big)\right\|_{L^p(0,\infty)}\leq Cs^{n+1},\quad s\in \left(0,\tfrac{\theta}{4}\right),
\]
and
\[
\lim_{s\rightarrow 0^+}\frac{(m+1)!}{s^{m+1}}\left(B_\lambda^sf-\left(f+\sum_{j=1}^m\frac{s^j}{j!}(\log^jB_\lambda)f\right)\right)=(\log^{m+1}B_\lambda)f,\quad \mbox{ in }L^p(0,\infty).
\]
Finally, taking into account \eqref{eq: decomposition Bessel negative power} and \eqref{eq: representation log^m negative s} and proceeding in a similar way we can see that if $1<p\leq \infty$, then
\[
\left\|B_\lambda^{-s}f-\left(f+\sum_{j=1}^m\frac{(-1)^js^j}{j!}(\log^jB_\lambda)f\right)\right\|_{L^p(0,\infty)}\leq Cs^{m+1},\quad s\in \big(0,\tfrac{\theta}{4}\big),
\]
and
\[
\lim_{s\rightarrow 0^+}\frac{(m+1)!}{s^{m+1}}\left(B_\lambda^{-s}f-\left(f+\sum_{j=1}^m\frac{(-1)^js^j}{j!}(\log^jB_\lambda)f\right)\right)=(\log^{m+1}B_\lambda)f,\quad \mbox{ in }L^p(0,\infty).
\]


\section{Proof of Theorem~\ref{thm: Th1.4}}\label{sec: proof Th1.4}

Suppose that $f\in L^1\big((0,\infty),\frac{dy}{1+y}\big)$ and recall the definition of the extension of $f$ to $(0,\infty)\times (0,\infty)$ given by
\[
u_f^\lambda (x,t)=\int_0^\infty W_t^\lambda (f)(x)e^{-\frac{t^2}{4u}}\frac{du}{u},\quad x,t\in (0,\infty).
\]
We note first that the integral above is finite from \eqref{eq: heat Bessel FeynmanKac}:
\begin{align}
\int_0^\infty \int_0^\infty W_u^\lambda (x,y)|f(y)|e^{-\frac{t^2}{4u}}\frac{du}{u}dy&\leq C\int_0^\infty \int_0^\infty \frac{e^{-\frac{t^2+|x-y|^2}{4u}}}{u^{3/2}}du|f(y)|dy\nonumber\\
&\hspace{-4cm}\leq C\int_0^\infty \frac{|f(y)|}{(t^2+|x-y|^2)^{1/2}}dy \label{eq: uflambda poisson}\\
&\hspace{-4cm}= C\left(\int_0^{2x}+\int_{2x}^\infty\right)\frac{|f(y)|}{(t^2+|x-y|^2)^{1/2}}dy\nonumber\\
&\hspace{-4cm}\leq C\left(\frac{1+x}{t}\int_0^\infty \frac{|f(y)|}{1+y}dy+\int_0^\infty \frac{|f(y)|}{t+y}dy\right)\nonumber\\
&\hspace{-4cm}\leq C\left(\frac{1+x}{t}+\frac{1}{\min\{1,t\}}\right)\int_0^\infty \frac{|f(y)|}
{1+y}dy<\infty,\quad x,t\in (0,\infty). \label{eq: first estimate uflambda}
\end{align}
Therefore, we can interchange the order of integration to write
\[
u_f^\lambda (x,t)=\int_0^\infty f(y)\int_0^\infty W_u^\lambda (x,y)\frac{e^{-\frac{t^2}{4u}}}{u}dudy,\quad x,t\in (0,\infty).
\]

We first establish the limit at infinity in part \ref{itm: Th1.4 b}. From \eqref{eq: first estimate uflambda} we deduce that
\begin{align*}
\int_0^\infty |f(y)|\int_0^\infty W_u^\lambda (x,y)e^{-\frac{t^2}{4u}}\frac{du}{u}dy&\leq C\int_0^\infty |f(y)|\int_0^\infty \frac{e^{-\frac{1+(x-y)^2}{4u}}}{u^{3/2}}dudy\\
&\hspace{-4cm}\leq C\int_0^\infty \frac{|f(y)|}{(1+(x-y)^2)^{1/2}}dy\leq C(1+x)\int_0^\infty \frac{|f(y)|}{1+y}dy<\infty ,\quad x\in (0,\infty)\mbox{ and }t\geq 1.
\end{align*}
Using the dominated convergence theorem we get $\lim_{t\rightarrow +\infty}u_f^\lambda (x,t)=0$, for every $x\in (0,\infty)$. 

We now return to the proof of part \ref{itm: Th1.4 a}. 
By proceeding as in the proof of \cite[Lemma 3.2]{ChHW} we can see that $u_f^\lambda (\cdot, t)\in L^1\big((0,\infty),\frac{dy}{(1+y)^{1+\sigma}}\big)$, and 
\[
\|u_f^\lambda (\cdot ,t)\|_{L^1_\sigma(0,\infty)}\leq C(1+\log^{-}t),\quad t,\sigma \in (0,\infty).
\]
where we recall that $\log^-t=\max\{0,-\log t\}$, $t>0$. 

Moreover, since $f\in L^1\big((0,\infty),\frac{dy}{1+y}\big)$, by \eqref{eq: uflambda poisson}
\[
|u_f^\lambda (x,t)|\leq C\int_0^\infty \frac{|f(y)|}{(t^2+|x-y|^2)^{1/2}}dt,\quad x,t\in (0,\infty),
\]
and applying \cite[Theorem 3.1, $(i)\Rightarrow (ii)$]{ChHW} we deduce that $u_f^\lambda\in AG((0,\infty)\times (0,\infty))$.

To see that $u_f^\lambda\in C^2((0,\infty)\times (0,\infty))$, we first justify differentiation under the integral sign with respect to the variable $t$. More precisely, for $j=1,2$,
\begin{equation}\label{eq: time derivatives uflambda}
\partial_t^ju_f^\lambda (x,t)=\int_0^\infty f(y)\int_0^\infty W_u^\lambda (x,y)\frac{\partial_t^je^{-\frac{t^2}{4u}}}{u}dudy,\quad x,t\in (0,\infty)
\end{equation}

Let $x_0,t_0\in (0,\infty)$. According to \eqref{eq: heat Bessel FeynmanKac} we get
\begin{align*}
    \int_0^\infty |f(y)|\int_0^\infty W_u^\lambda (x_0,y)\frac{|\partial_te^{-\frac{t^2}{4u}}|}{u}dudy&\leq C \int_0^\infty |f(y)|\int_0^\infty \frac{e^{-\frac{|x_0-y|^2}{4u}}}{u^{5/2}}te^{-\frac{t^2}{4u}}dudy\\
    &\hspace{-4cm}\leq Ct\int_0^\infty \frac{|f(y)|}{(|x_0-y|^2+t^2)^{3/2}}dy\leq C\int_0^\infty \frac{|f(y)|}{|x_0-y|^2+t^2}dy\\
    &\hspace{-4cm}\leq C\left(\int_0^{2x_0}+\int_{2x_0}^\infty\right)\frac{|f(y)|}{|x_0-y|^2+t^2}dy\\
    &\hspace{-4cm}\leq C\left(\frac{(1+x_0)^2}{t^2}+\frac{1}{\min\{1,t^2\}}\right)\int_0^\infty \frac{|f(y)|}{(1+y)^2}dy\\
    &\hspace{-4cm}\leq C\int_0^\infty \frac{|f(y)|}{1+y}dy,\quad t\in \left(\tfrac{t_0}{2},3\tfrac{t_0}{2}\right),
\end{align*}
and
\begin{align*}
    \int_0^\infty |f(y)|\int_0^\infty W_u^\lambda (x_0, y)\frac{|\partial_t^2e^{-\frac{t^2}{4u}}|}{u}dudy&\leq C \int_0^\infty |f(y)|\int_0^\infty \frac{e^{-\frac{|x_0-y|^2}{4u}}}{u^{3/2}}\left(\frac{1}{u}+\frac{t^2}{u^2}\right)e^{-\frac{t^2}{4u}}dudy\\
    &\hspace{-4cm}\leq C\int_0^\infty |f(y)|\int_0^\infty \frac{e^{-c\frac{(x_0-y)^2+t^2}{u}}}{u^{5/2}}dudy\\
    &\hspace{-4cm}\leq \frac{C}{t}\left(\frac{(1+x_0)^2}{t^2}+\frac{1}{\min\{1,t^2\}}\right)\int_0^\infty \frac{|f(y)|}{(1+y)^2}dy\\
    &\hspace{-4cm}\leq C\int_0^\infty \frac{|f(y)|}{1+y}dy,\quad t\in \big(\tfrac{t_0}{2},3\tfrac{t_0}{2}\big),
\end{align*}
Thus, the equality in \eqref{eq: time derivatives uflambda} is proved.

Observe that
\[
\left(\partial_t^2+\frac{1}{t}\partial_t\right)e^{-\frac{t^2}{4u}}=\left(-\frac{1}{u}+\frac{t^2}{4u^2}\right)e^{-\frac{t^2}{4u}}=u\partial_u\left(\frac{e^{-\frac{t^2}{4u}}}{u}\right),\quad t,u\in (0,\infty).
\]
Thus, using \eqref{eq: time derivatives uflambda} we deduce that
\[
\left(\partial_t^2+\frac{1}{t}\partial_t\right)u_f^\lambda (x,t)=\int_0^\infty f(y)\int_0^\infty W_u^\lambda(x,y)\partial_u\left(\frac{e^{-\frac{t^2}{4u}}}{u}\right)dudy,\quad x,t\in (0,\infty). 
\]
Since $\partial_uW_u^\lambda (x,y)=B_{\lambda ,x}W_u^\lambda (x,y)$, $x,y,u\in (0,\infty)$, partial integration leads to
\[
\int_0^\infty W_u^\lambda (x,y)\partial_u\left(\frac{e^{-\frac{t^2}{4u}}}{u}\right)du=W_u^\lambda (x,y)\frac{e^{-\frac{t^2}{4u}}}{u}\Bigg|_{u=0}^{u=\infty}-\int_0^\infty B_{\lambda ,x}W_u^\lambda (x,y)\frac{e^{-\frac{t^2}{4u}}}{u}du,\quad x,y,t\in (0,\infty).
\]
According to \eqref{eq: heat Bessel FeynmanKac} it follows that $W_u^\lambda (x,y)\frac{e^{-\frac{t^2}{4u}}}{u}\Bigg|_{u=0}^{u=\infty}=0$, $x,y,t\in (0,\infty)$. Hence,
\[
\left(\partial_t^2+\frac{1}{t}\partial_t\right)u_f^\lambda (x,t)=\int_0^\infty f(y)\int_0^\infty B_{\lambda ,x}W_u^\lambda (x,y)\frac{e^{-\frac{t^2}{4u}}}{u}dudy,\quad x,t\in (0,\infty).
\]

We will now show that, for $x,t\in (0,\infty)$,
\begin{equation}\label{eq: Blambdax exchange}
   \int_0^\infty f(y)\int_0^\infty B_{\lambda ,x}W_u^\lambda (x,y)\frac{e^{-\frac{t^2}{4u}}}{u}dudy=B_{\lambda ,x}\int_0^\infty f(y)\int_0^\infty W_u^\lambda (x,y)\frac{e^{-\frac{t^2}{4u}}}{u}dudy.
\end{equation}

Note we can write $B_{\lambda, x}=x^{-\lambda -1/2}\frac{d}{dx}x^{2\lambda +1}\frac{d}{dx}x^{-\lambda -1/2}$. 
By using \eqref{eq: Ilambda derivative}, for $x,t\in (0,\infty)$,
\begin{align*}
    \partial_x\left(x^{-\lambda -1/2}W_t^\lambda (x,y)\right)&=\partial_x \left(y^{\lambda+1/2}\frac{(xy)^{-\lambda }}{2t}I_\lambda \left(\frac{xy}{2t}\right)e^{-\frac{x^2+y^2}{4t}}\right)\\
    &=\left(\frac{y^{\lambda +3/2}}{(2t)^{\lambda +1}}\left(\frac{xy}{2t}\right)^{-\lambda }I_{\lambda +1}\left(\frac{xy}{2t}\right)-\frac{y^{\lambda+1/2}}{(2t)^\lambda }\left(\frac{xy}{2t}\right)^{-\lambda }I_\lambda \left(\frac{xy}{2t}\right)\frac{x}{2t}\right)\frac{e^{-\frac{x^2+y^2}{4t}}}{2t}.
\end{align*}
Taking into account the behaviour of $I_\lambda (z)$ when $z\rightarrow 0^+$ and $z\rightarrow +\infty$ (see \eqref{eq: Ilambda at zero} and \eqref{eq: Ilambda at infty}) we deduce that
\begin{align*}
    \left|\partial_x\left(x^{-\lambda -1/2}W_t^\lambda (x,y)\right)\right|&\leq C\left(\frac{y^{\lambda +3/2}}{t^{\lambda +1}}\frac{xy}{t}+\frac{y^{\lambda+1/2}}{t^\lambda }\frac{x}{2t}\right)\frac{e^{-c\frac{x^2+y^2}{t}}}{t}\leq C\frac{xy^{\lambda+1/2}}{t^{\lambda +2}}\left(\frac{y^2}{t}+1\right)e^{-c\frac{x^2+y^2}{t}}\\
    &\leq C\frac{x}{t^{\lambda /2+7/4}}e^{-c\frac{x^2+y^2}{t}},\quad x,y,t\in (0,\infty) \mbox{ and }xy<t,
\end{align*}
and
\begin{align*}
    \left|\partial_x\left(x^{-\lambda -1/2}W_t^\lambda (x,y)\right)\right|&=\frac{y^{\lambda+1/2}}{(2t)^{\lambda +2}}\left(\frac{xy}{2t}\right)^{-\lambda }\left|I_{\lambda +1}\left(\frac{xy}{2t}\right)y-I_\lambda \left(\frac{xy}{2t}\right)x\right|e^{-\frac{x^2+y^2}{4t}}\\
    &\leq C\frac{y^{\lambda+1/2}}{t^{\lambda +2}}\left(\frac{xy}{t}\right)^{-\lambda -\tfrac12}(y+x)e^{-c\frac{(x-y)^2}{t}},\quad x,y,t\in (0,\infty) \mbox{ and }xy>t.
\end{align*}

Let $x_0,t_0\in (0,\infty)$. We have that
\begin{align*}
    \int_0^ \infty |f(y)|\int_0^\infty \left|\partial_x\left(x^{-\lambda -1/2}W_u^\lambda (x,y)\right)\right|\frac{e^{-\frac{t^2}{4u}}}{u}dudy&\leq C\left(x\int_0^\infty |f(y)|\int_{xy}^\infty \frac{e^{-c\frac{x^2+y^2}{u}}}{u^{\lambda/2+11/4}}dudy\right.\\
    &\hspace{-6cm}\left.\quad +\int_0^\infty |f(y)|(x+y)y^{\lambda+1/2}\int_0^{xy}\left(\frac{xy}{u}\right)^{-\lambda -\tfrac12}\frac{e^{-c\frac{(x-y)^2+t_0^2}{u}}}{u^{\lambda +3}}dudy\right)\\
    &\hspace{-6cm}\leq C\left(x_0\int_0^\infty \frac{|f(y)|}{(x_0^2+y^2)^{\lambda/2+7/4}}dy+\int_0^\infty|f(y)|\int_0^{xy}(y+x_0)\frac{y^{\lambda+1/2}}{u^{\lambda+3}}e^{-c\frac{(x-y)^2+t^2_0}{u}}dudy\right.\\
    &\hspace{-6cm}\le C\Big(\frac{1}{x_0^{\lambda+3/2}}\int_0^\infty \frac{|f(y)|}{x_0+y}dy+\int_0^\infty|f(y)|(y+x_0)\frac{y^{\lambda+1/2}}{((x-y)^2+t_0^2)^{\lambda+2}}dy\Big)\\
    &\hspace{-6cm}\le C\left(\frac{1}{x_0^{\lambda+3/2}}\int_0^\infty \frac{|f(y)|}{x_0+y}dy+\left(\int_0^{2x}+\int_{2x}^\infty\right)|f(y)|(y+x_0)\frac{y^{\lambda+1/2}}{((x-y)^2+t_0^2)^{\lambda+2}}dy\right)\\
    &\hspace{-6cm}\le C\Bigg(\frac{1}{x_0^{\lambda+3/2}}\int_0^\infty \frac{|f(y)|}{x_0+y}dy+\frac{x_0^{\lambda+3/2}(1+x_0)}{t_0^{2\lambda+4}}\int_0^\infty\frac{|f(y)|}{1+y}dy \\
    &\hspace{-6cm}\quad +\int_{2x}^\infty|f(y)|\frac{y^{\lambda+3/2}}{(y^2+t_0^2)^{\lambda+2}}dy\Bigg)\\
    &\hspace{-6cm}\le C\left(\frac{1}{x_0^{\lambda+3/2}}+\frac{(1+x_0)x_0^{\lambda+3/2}}{t_0^{2\lambda+4}}\right)\left(\int_0^\infty\frac{|f(y)|}{1+y}dy+\frac{1}{t_0^\lambda}\int_{2x}^\infty\frac{|f(y)|}{(y+t_0)^{5/2}}dy\right)\\
    &\hspace{-6cm}\le C\int_0^\infty\frac{|f(y)|}{1+y}dy,\quad x\in \big(\tfrac{x_0}{2},3\tfrac{x_0}{2}\big).
\end{align*}
Here $C>0$ depends only on $x_0$ and $t_0$.

The last estimate leads to
\begin{align*}
\partial_x\left(x^{-\lambda -1/2}\int_0^\infty f(y)\int_0^\infty W_u^\lambda (x,y)\frac{e^{-\frac{t^2}{4u}}}{u}dudy\right)&=\int_0^\infty f(y)\int_0^\infty \partial_x\left(x^{-\lambda -1/2}W_u^\lambda (x,y)\right)\frac{e^{-\frac{t^2}{4u}}}{u}dudy,
\end{align*}
for $x,t\in (0,\infty)$.

On the other hand, by \eqref{eq: Ilambda derivative}
\begin{align*}
x^{-\lambda -1/2}\partial_x\left(x^{2\lambda +1}\partial_x\left(x^{-\lambda -1/2}W_u(x,y)\right)\right)&=B_{\lambda ,x}W_u^\lambda (x,y)=\partial_uW_u^\lambda (x,y)\\
&\hspace{-4cm}=\partial_u\left(\frac{\sqrt{xy}}{2u}I_\lambda \left(\frac{xy}{2u}\right)e^{-\frac{x^2+y^2}{2u}}\right)= \partial_u\left(\frac{(xy)^{\lambda+1/2}}{(2u)^{\lambda +1}}\left(\frac{xy}{2u}\right)^{-\lambda }I_\lambda \left(\frac{xy}{2u}\right)e^{-\frac{x^2+y^2}{2u}}\right)\\
&\hspace{-4cm}=(xy)^{\lambda+1/2}\Bigg(-\frac{2(\lambda+1)}{(2u)^{\lambda +2}}\left(\frac{xy}{2u}\right)^{-\lambda }I_\lambda \left(\frac{xy}{2u}\right)-\frac{1}{(2u)^{\lambda +1}}\frac{xy}{2u^2}\left(\frac{xy}{2u}\right)^{-\lambda }I_{\lambda+1} \left(\frac{xy}{2u}\right)\\
&\hspace{-4cm}\quad +\frac{x^2+y^2}{4u^2}\frac{1}{(2u)^{\lambda +1}}\left(\frac{xy}{2u}\right)^{-\lambda }I_\lambda \left(\frac{xy}{2u}\right)\Bigg)e^{-\frac{x^2+y^2}{2u}},\quad x,u\in (0,\infty).
\end{align*}
Using again \eqref{eq: Ilambda at zero}, for $x,y,u\in (0,\infty)$ and $xy\leq u$,
\begin{align*}
|B_{\lambda ,x}W_u^\lambda (x,y)|&\leq C\frac{(xy)^{\lambda+1/2}}{u^{\lambda +2}}\left(1+\left(\frac{xy}{u}\right)^2+\frac{x^2+y^2}{u}\right)e^{-c\frac{x^2+y^2}{u}}\leq C\frac{(xy)^{\lambda+1/2}}{u^{\lambda +2}}e^{-c\frac{x^2+y^2}{u}},  
\end{align*}
and by \eqref{eq: Ilambda at infty}, when $xy>u$,
\begin{align*}
|B_{\lambda ,x}W_u^\lambda (x,y)|&\leq C\Bigg[\frac{1}{u^{3/2}}\sqrt{\frac{xy}{2u}}I_\lambda \left(\frac{xy}{2u}\right)+\frac{xy}{u^{5/2}}\sqrt{\frac{xy}{2u}}I_{\lambda +1} \left(\frac{xy}{2u}\right)\\
&\quad +\frac{x^2+y^2}{u^{5/2}}\sqrt{\frac{xy}{2u}}I_\lambda \left(\frac{xy}{2u}\right)\Bigg]e^{-c\frac{(x-y)^2}{u}}\\
&\leq C\left(\frac{1}{u^{3/2}}+\frac{xy}{u^{5/2}}+\frac{x^2+y^2}{u^{5/2}}\right)e^{-\frac{(x-y)^2}{4u}}. 
\end{align*}
Let $x_0,t_0\in (0,\infty)$. It follows that
\begin{align*}
    \int_0^ \infty |f(y)|\int_0^\infty |\partial_u W_u^\lambda (x,y)|\frac{e^{-\frac{t_0^2}{4u}}}{u}dudy&\leq C\Bigg(\int_0^\infty |f(y)|\int_{xy}^\infty (xy)^{\lambda+1/2}\frac{e^{-c\frac{x^2+y^2+t_0^2}{u}}}{u^{\lambda +3}}dudy\\
    &\hspace{-6cm}\quad +\int_0^\infty |f(y)|\int_0^{xy}\left(\frac{1}{u^{3/2}}+\frac{x^2+y^2}{u^{5/2}}\right)e^{-c\frac{(x-y)^2+t_0^2}{u}}\frac{du}{u}dy\Bigg)\\
    &\hspace{-6cm}\leq C\left[\int_0^\infty |f(y)|\frac{(x_0y)^{\lambda+1/2}}{(x_0^2+y^2+t_0^2)^{\lambda +3}}dy\right.\\
    &\hspace{-6cm}\left.\quad +\left(\int_0^{2x}+\int_{2x}^\infty\right)|f(y)|\int_0^{xy}\left(\frac{1}{u^{5/2}}+\frac{x^2+y^2}{u^{7/2}}\right)e^{-c\frac{(x-y)^2+t_0^2}{u}}dudy\right]\\
    &\hspace{-6cm}\leq C\left[\int_0^\infty \frac{|f(y)|}{1+y}dy+\int_{x_0}^\infty|f(y)|\left(\frac{1}{(y^2+t_0^2)^{3/2}}+\frac{x_0^2+y^2}{(y^2+t_0^2)^{5/2}}\right)dy\right]\\
    &\hspace{-6cm}\leq C\int_0^\infty \frac{|f(y)|}{1+y}dy,\quad x\in \big(\tfrac{x_0}{2},3\tfrac{x_0}{2}\big),
\end{align*}
where $C>0$ depends only on $x_0$ and $t_0$.

The above estimates yields \eqref{eq: Blambdax exchange}. Thus, we have established that
\[
\left(\partial_t^2+\frac{1}{t}\partial_t\right)u_f^\lambda (x,t)=B_{\lambda ,x}u_f^\lambda (x,t),\quad x,t\in (0,\infty).
\]

The estimates above are uniform for $(x,t)\in (\tfrac{x_0}{2},3\tfrac{x_0}{2})\times(\tfrac{t_0}{2},3\tfrac{t_0}{2})$ of any point $(x_0,t_0)\in(0,\infty)\times(0,\infty)$.
Consequently, by the theorem on differentiation under the integral sign, the functions
$(x,t)\mapsto \partial_t^ju_f^\lambda (x,t)$, $j=1,2$, and $(x,t)\mapsto B_{\lambda ,x}u_f^\lambda (x,t)$
are continuous on $(0,\infty)\times (0,\infty)$. This, together with the continuity of
$u_f^\lambda$ itself (which follows from \eqref{eq: first estimate uflambda} in the same way), shows that
$u_f^\lambda\in C^2((0,\infty)\times (0,\infty))$.

We are now going to see that
\[
\lim_{t\rightarrow 0^+}t\partial_t u_f^\lambda (x,t)=-2f(x),\quad \mbox{ in  }L^1_{\rm loc}(0,\infty).
\]

Note that
\[
t\partial_t u_f^\lambda (x,t)=-\int_0^\infty f(y)\int_0^\infty W_u^\lambda (x,y)\frac{t^2}{2u}\frac{e^{-\frac{t^2}{4u}}}{u}dudy,\quad x,t\in (0,\infty).
\]
We decompose $t\partial_t u_f^\lambda (x,t)$ as follows
\begin{align}\label{P1}
    -t\partial_t u_f^\lambda (x,t)&=\left(\int_0^{x/2}+\int_{2x}^\infty \right)f(y)\int_0^\infty W_u^\lambda (x,y)\frac{t^2}{2u^2}e^{-\frac{t^2}{4u}}dudy\nonumber\\
    &\quad +\int_{x/2}^{2x}f(y)\int_0^ \infty (W_u^\lambda (x,y)-W_u(x-y))\frac{t^2}{2u^2}e^{-\frac{t^2}{4u}}dudy\nonumber\\
    &\quad +\int_{x/2}^{2x}f(y)\int_0^ \infty W_u(x-y)\frac{t^2}{2u^2}e^{-\frac{t^2}{4u}}dudy\nonumber\\
    &:=\sum_{j=1}^4I_j(x,t),\quad x,t\in (0,\infty).
\end{align}
According to \eqref{eq: heat Bessel FeynmanKac} we obtain
\begin{align}\label{P2}
|I_1(x,t)|&\leq C\int_0^{x/2}|f(y)|\int_0^\infty \frac{e^{-\frac{|x-y|^2+t^2}{4u}}}{\sqrt{u}}\frac{t^2}{u^2}du\leq Ct^2\int_0^{x/2}\frac{|f(y)|}{((x-y)^2+t^2)^{3/2}}dy\nonumber\\
&\leq Ct^2\frac{1+x}{x^3}\int_0^\infty \frac{|f(y)|}{1+y}dy,\quad x,t\in (0,\infty),
\end{align}
and
\begin{align}\label{P3}
|I_2(x,t)|&\leq Ct^2\int_{2x}^\infty \frac{|f(y)|}{((x-y)^2+t^2)^{3/2}}dy\leq Ct^2\int_{2x}^\infty \frac{|f(y)|}{(x^2+y^2)^{3/2}}dy\nonumber\\
&\leq C\frac{t^2}{x^2}\left(1+\frac{1}{x}\right)\int_0^\infty \frac{|f(y)|}{1+y}dy,\quad  x,t\in (0,\infty).
\end{align}

Using \eqref{eq: heat Bessel FeynmanKac} and \eqref{eq: heat Bessel comparison t<xy} we get
\begin{align}\label{P4}
|I_3(x,t)|&\leq Ct^2\int_{x/2}^{2x}|f(y)|\left(\int_0^{xy}+\int_{xy}^\infty \right) |W_u^\lambda (x,y)-W_u(x-y)|\frac{e^{-\frac{t^2}{4u}}}{u^2}dudy\nonumber\\
&\leq Ct^2\int_{x/2}^{2x}|f(y)|\left(\int_0^{xy}\frac{e^{-c\frac{(x-y)^2}{u}}}{u^{3/2}}\frac{1}{xy}e^{-\frac{t^2}{4u}}dudy+\int_{xy}^\infty \frac{e^{-\frac{(x-y)^2+t^2}{4u}}}{u^{5/2}}dudy\right)\nonumber\\
&\leq Ct^2\int_{x/2}^{2x}|f(y)|\left(\frac{1}{xy}\frac{1}{|x-y|+t}+\frac{1}{(xy)^{3/2}}\right)\nonumber\\
&\leq Ct\left(\frac{1+x}{x^2}+t\frac{1+x}{x^3}\right)\int_0^\infty\frac{|f(y)|}{1+y}dy,\quad x,t\in (0,\infty).
\end{align}
On the other hand, proceeding as in the estimates of $I_1$ and $I_2$, we obtain
\begin{equation}\label{P5}
\left|\left(\int_0^{x/2}+\int_{2x}^\infty \right)f(y)\int_0^\infty W_u(x-y)\frac{t^2}{2u^2}e^{-\frac{t^2}{4u}}dudy\right|\leq C\frac{t^2}{x^2}\left(1+\frac{1}{x}\right)\int_0^\infty \frac{|f(y)|}{1+y}dy,
\end{equation}
for $x,t\in (0,\infty)$.

We consider the function $f_0$ defined by $f_0(x)=f(x)$, $x\in (0,\infty)$, and $f_0(x)=0$, $x\in (-\infty ,0]$. For $x,t\in (0,\infty)$ we have that 
\[
\int_0^\infty f(y)\int_0^\infty W_u(x-y)\frac{e^{-\frac{t^2}{4u}}}{u}dudy=\int_0^\infty \frac{f(y)}{((x-y)^2+t^2)^{1/2}}dy=\int_{-\infty}^{+\infty }\frac{f_0(y)}{((x-y)^2+t^2)^{1/2}}dy.
\]

According to \cite[Theorem 3.1, (3.22)]{ChHW} it follows that
\[
\lim_{t\rightarrow 0^+}t\partial_t\int_{-\infty }^{+\infty }\frac{f_0(y)}{((x-y)^2+t^2)^{1/2}}=-2f_0(x),\quad \mbox{ in }L^1_{\rm loc}(\mathbb R).
\]
Then
\begin{equation}\label{P6}
\lim_{t\rightarrow 0^+}t\partial_t\int_0^\infty f(y)\int_0^\infty W_u(x-y)\frac{e^{-\frac{t^2}{4u}}}{u}dudy=-2 f(x)\quad \mbox{ in }L^1_{\rm loc}(0,\infty).
\end{equation}
From \eqref{P5} and \eqref{P6} we deduce that
\begin{equation}\label{P7}
\lim_{t\rightarrow 0^+}I_4(x,t)=2 f(x),\quad \mbox{ in }L^1_{\rm loc}(0,\infty).
\end{equation}
Since the right-hand sides of \eqref{P2}--\eqref{P4} are uniformly bounded on compact subsets of $(0,\infty)$ by a constant times a polynomial in $t$, it follows that $I_j(\cdot,t)\to 0$ in $L^1_{\rm loc}(0,\infty)$, $j=1,2,3$. Combining this with \eqref{P7} we conclude that
\[
-\lim_{t\rightarrow 0^+}t\partial_tu_f^\lambda (x,t)=2 f(x),\quad \mbox{ in }L^1_{\rm loc}(0,\infty).
\]
Since $u_f^\lambda\in AG((0,\infty)\times (0,\infty))\cap C^2((0,\infty)\times (0,\infty))$ and $f\in L^1_{\rm loc}(0,\infty)$, proceeding as in the proof of \cite[Lemma 3.6]{ChHW} we obtain
\[
\lim_{t\rightarrow 0^+}\frac{u_f^\lambda (\cdot,t)}{\log t}=-2 f(\cdot),\quad \mbox{ in }L^1_{\rm loc}(0,\infty).
\]

Suppose now that $f$ is Dini continuous in $x\in (0,\infty)$. We can write 
\begin{align}\label{L0}
u_f^\lambda (x,t)&=\int_{(0,\infty)\setminus B(x,1)}f(y)\int_0^\infty \frac{e^{-\frac{t^2}{4u}}-1}{u}W_u^\lambda (x,y)dudy\nonumber\\
&\quad +\int_{(0,\infty)\setminus B(x,1)}f(y)\int_0^\infty \frac{W_u^\lambda (x,y)}{u}dudy\nonumber\\
&\quad +\int_{B(x,1)\setminus B(x,t)}f(y)\int_0^\infty \frac{e^{-\frac{t^2}{4u}}}{u}W_u^\lambda (x,y)dudy\nonumber\\
&\quad +\int_{B(x,t)}f(y)\int_0^\infty \frac{e^{-\frac{t^2}{4u}}}{u}W_u^\lambda (x,y)dudy\nonumber\\
&:=I_1(t)+I_2+I_3(t)+I_4(t),\quad t\in (0,\infty).
\end{align}
Using \eqref{eq: heat Bessel FeynmanKac} we get
\begin{align*}
  |I_1(t)|&\leq C\int_{(0,\infty)\setminus B(x,1)}|f(y)|\int_0^\infty \frac{t^2}{u^{5/2}}e^{-\frac{(x-y)^2}{4u}}dudy\\
&\leq Ct^2\int_{(0,\infty)\setminus B(x,1)}\frac{|f(y)|}{|x-y|^3}dy= Ct^2\left(\int_{\substack{(0,\infty)\setminus B(x,1)\\y\leq 2x}}+\int_{\substack{(0,\infty)\setminus B(x,1)\\ y>2x}}\right)\frac{|f(y)|}{|x-y|^3}dy\\
&\leq Ct^2\left((1+x)\int_0^\infty \frac{|f(y)|}{1+y}dy+\int_{2x}^\infty \frac{|f(y)|}{y^3}dy\right)\\
&\leq Ct^2\left((1+x)\int_0^\infty \frac{|f(y)|}{1+y}dy+\int_{2x}^\infty \frac{|f(y)|}{(x+y)^3}dy\right)\\
&\leq Ct^2\left((1+x)\int_0^\infty \frac{|f(y)|}{1+y}dy+\frac{1}{\min\{1,x^3\}}\int_0^\infty \frac{|f(y)|}{(1+y)^3}dy\right)\\
&\leq Ct^2\left(1+x+\frac{1}{\min\{1,x^3\}}\right)\int_0^\infty \frac{|f(y)|}{1+y}dy,\quad t\in (0,\infty).
\end{align*}
Thus,
\begin{equation}\label{L1}
\lim_{t\rightarrow 0^+}I_1(t)=0.
\end{equation}

We now decompose $I_3$ as follows
\begin{align*}
    I_3(t)&=
    \int_{B(x,1)\setminus B(x,t)}(f(y)-f(x))\int_0^\infty \frac{W_u^\lambda (x,y)}{u}e^{-\frac{t^2}{4u}}dudy\\
    &\quad +f(x)\int_{B(x,1)\setminus B(x,t)}\int_0^\infty \frac{W_u^\lambda (x,y)}{u}e^{-\frac{t^2}{4u}}dudy\\
    &=I_{3,1}(t)+I_{3,2}(t),\quad t\in (0,1).
\end{align*}
Since $f$ is Dini continuous in $x\in (0,\infty)$, by \eqref{eq: heat Bessel FeynmanKac} we have that 
\begin{align*}
    |I_{3,1}(t)|&\le \int_{B(x,1)\setminus B(x,t)}|f(y)-f(x)|\int_0^\infty \frac{e^{-c\frac{|x-y|^2}{u}}}{u^{3/2}}dudy\\
    &\le C\int_{B(x,1)\setminus B(x,t)}\sup_{|x-z|\le|x-y|}\frac{|f(x)-f(z)|}{|x-z|}dy\\
    &\le C\int_0^1\frac{w_{f,x}(r)}{r}dr<\infty,\quad t\in (0,1).
\end{align*}
Dominated convergence theorem leads to
\[
\lim_{t\rightarrow 0^+}I_{3,1}(t)=\int_{B(x,1)}(f(y)-f(x))\int_0^\infty W_u^\lambda (x,y)\frac{du}{u}dy.
\]
We define 
\[
J_{3,2}(t)=\int_{B(x,1)\setminus B(x,t)}\int_0^\infty \frac{W_u^\lambda (x,y)}{u}e^{-\frac{t^2}{4u}}dudy,\quad t\in (0,1).
\]
We can write
\begin{align*}
J_{3,2}(t)&=\int_{B(x,1)\setminus B(x,t)}\int_1^\infty \frac{W_u^\lambda (x,y)}{u}e^{-\frac{t^2}{4u}}dudy\\
&\quad +\int_{B(x,1)\setminus B(x,t)}\int_0^1 \frac{W_u^\lambda (x,y)-W_u(x-y)}{u}e^{-\frac{t^2}{4u}}dudy \\
&\quad -\int_{B(x,1)\setminus B(x,t)}\int_1^\infty \frac{W_u(x-y)}{u}e^{-\frac{t^2}{4u}}dudy\\
&\quad +\int_{B(x,1)\setminus B(x,t)}\int_0^\infty \frac{W_u(x-y)}{u}e^{-\frac{t^2}{4u}}dudy\\
&=\sum_{j=1}^4J_{3,2,j}(t),\quad t\in (0,1).
\end{align*}
According to \eqref{eq: Ilambda at zero} and \eqref{eq: heat Bessel FeynmanKac} we get 
\begin{align*}
    |J_{3,2,1}(t)|&\leq C\int_{B(x,1)\setminus B(x,t)}\int_1^\infty \left(\frac{(xy)^{\lambda+1/2}}{u^{\lambda +2}}e^{-\frac{x^2+y^2}{4u}}+\frac{e^{-c\frac{(x-y)^2}{u}}}{u^{3/2}}\right)dudy\\
    &\leq C\left(\int_{B(x,1)}\frac{(xy)^{\lambda+1/2}}{(x^2+y^2)^{\lambda +1}}dy+\int_{B(x,1)}\int_1^\infty \frac{du}{u^{3/2}}dy\right)\\
    &\leq C\left(\int_{0}^\infty \frac{z^{\lambda+1/2}}{(1+z^2)^{\lambda +1}}dz+\int_{B(x,1)}\int_1^\infty \frac{du}{u^{3/2}}dy\right)\leq C,\quad t\in (0,1),
\end{align*}
and 
\[
|J_{3,2,3}(t)|\leq C\int_{B(x,1)\setminus B(x,t)}\int_1^\infty\frac{e^{-c\frac{(x-y)^2}{u}}}{u^{3/2}}du\leq C\int_{B(x,1)}dy\leq C,\quad t\in (0,1).
\]
The dominated convergence theorem leads to
\[
\lim_{t\rightarrow 0^+}(J_{3,2,1}(t)+J_{3,2,3}(t))=\int_{B(x,1)}\int_1^\infty \left(W_u^\lambda (x,y)-W_u(x-y)\right)\frac{du}{u}dy.
\]
For $J_{3,2,2}$, using \eqref{eq: heat Bessel FeynmanKac} and \eqref{eq: heat Bessel comparison t<xy}, we obtain the following estimates
\begin{align*}
    |J_{3,2,2}(t)|&\leq \int_{B(x,1)\setminus B(x,t)}\left(\int_0^{\min \{xy,1\}}|W_u^\lambda (x,y)-W_u(x-y)|\frac{e^{-\frac{t^2}{4u}}}{u}du\right.\\
    &\quad \left.+\mathcal X_{(0,1/x)}(y)\int_{xy}^1|W_u^\lambda (x,y)-W_u(x-y)|\frac{e^{-\frac{t^2}{4u}}}{u}du\right)dy\\
    &\leq C\int_{B(x,1)\setminus B(x,t)}\left(\int_0^{\min \{xy,1\}}\frac{e^{-c\frac{(x-y)^2+t^2}{u}}}{u^{1/2}}\frac{1}{xy}du\right.\\
    &\quad \left.+\mathcal X_{(0,1/x)}(y)\int_{xy}^1\frac{e^{-c\frac{(x-y)^2+t^2}{u}}}{u^{3/2}}du\right)dy\\
    &\leq C\int_{B(x,1)}\left(\frac{1}{xy}\int_0^{\min \{xy,1\}}\frac{du}{u^{1/2}}+\int_{xy}^\infty \frac{du}{u^{3/2}}\right)dy\\
    &\leq C\int_{B(x,1)}\frac{dy}{\sqrt{xy}}\leq C \sqrt{1+\frac{1}{x}},\quad t\in (0,1).
\end{align*}
Again, by using the dominated convergence theorem we get
\[
\lim_{t\rightarrow 0^+}J_{3,2,2}(t)=\int_{B(x,1)}\int_0^1\frac{W_u^\lambda (x,y)-W_u(x-y)}{u}dudy.
\]
On the other hand, we can write
\begin{align*}
    J_{3,2,4}(t)&=\int_{B(x,1)\setminus B(x,t)}\int_0^\infty \frac{W_u(x-y)}{u}e^{-\frac{t^2}{4u}}dudy=\int_{B(x,1)\setminus B(x,t)}\int_0^\infty \frac{e^{-\frac{(x-y)^2+t^2}{4u}}}{2\sqrt{\pi}u^{3/2}}dudy\\
    &=\int_{B(x,1)\setminus B(x,t)}\frac{dy}{((x-y)^2+t^2)^{1/2}}=\int_{(x-1,x+1)\setminus (x-t,x+t)}\frac{dy}{((x-y)^2+t^2)^{1/2}}\\
    &\quad -\mathcal X_{(0,1)}(x)\int_{x-1}^0\frac{dy}{((x-y)^2+t^2)^{1/2}},\quad t\in (0,\min\{x,1\}).
\end{align*}
According to \cite[Lemma 4.1]{ChHW} we obtain 
\[
\lim_{t\rightarrow 0^+}(J_{3,2,4}(t)+2\log t)=q+\mathcal X_{(0,1)}(x)\log x,
\]
where $q=2\int_1^\infty \left((r^2+1)^{-1/2}-r^{-1}\right)dr=2\log(2)-2\log(1+\sqrt{2})$. 

Combining the limits obtained for $J_{3,2,j}$ for $j=1,2,3, 4$, we conclude that 
\begin{align*}
\lim_{t\rightarrow 0^+}(J_{3,2}(t)+2\log t)&=\int_{B(x,1)}\int_1^\infty \frac{W_u^\lambda (x,y)}{u}dudy-\int_{B(x,1)}\int_1^\infty \frac{W_u(x-y)}{u}dudy\\
&\quad +\int_{B(x,1)}\int_0^1\frac{W_u^\lambda (x,y)-W_u(x-y)}{u}dudy+q+\mathcal X_{(0,1)}(x)\log x.
\end{align*}
Combining the above estimates, we get
\begin{align}\label{L2}
    \lim_{t\rightarrow 0^+}(I_3(t)+2f(x)\log t)&=\int_{B(x,1)}(f(y)-f(x))\int_0^\infty \frac{W_u^\lambda (x,y)}{u}dudy\nonumber\\
    &\quad +f(x)\left(\int_{B(x,1)}\int_1^\infty \frac{W_u^\lambda (x,y)}{u}dudy-\int_{B(x,1)}\int_1^\infty \frac{W_u(x-y)}{u}dudy\right.\nonumber\\
    &\quad +\left.\int_{B(x,1)}\int_0^1\frac{W_u^\lambda (x,y)-W_u(x-y)}{u}dudy+q+\mathcal X_{(0,1)}(x)\log x\right).
\end{align}

We now study $I_4$. We decompose it as follows
\begin{align*}
I_4(t)&=\int_{B(x,t)}f(y)\int_0^1\frac{e^{-\frac{t^2}{4u}}}{u}(W_u^\lambda (x,y)-W_u(x-y))dudy\\
&\quad +\int_{B(x,t)}f(y)\int_0^\infty \frac{e^{-\frac{t^2}{4u}}}{u}W_u (x-y)dudy\\
&\quad -\int_{B(x,t)}f(y)\int_1^\infty \frac{e^{-\frac{t^2}{4u}}}{u}W_u (x-y)dudy\\
&\quad +\int_{B(x,t)}f(y)\int_1^\infty \frac{e^{-\frac{t^2}{4u}}}{u}W_u^\lambda (x,y)dudy\\
&=\sum_{j=1}^4I_{4,j}(t),\quad t\in (0,1).
\end{align*}
Using \eqref{eq: heat Bessel FeynmanKac} we obtain
\[
|I_{4,3}(t)|+|I_{4,4}(t)|\leq C\int_{B(x,t)}|f(y)|\int_1^\infty \frac{e^{-c\frac{(x-y)^2+t^2}{u}}}{u^{3/2}}dudy\leq C\int_{B(x,t)}|f(y)|dy,\quad t\in (0,1).
\]
Since $f\in L^1_{\rm loc}(0,\infty)$ it follows that
\[
\lim_{t\rightarrow 0^+}(I_{4,3}(t)+I_{4,4}(t))=0.
\]
On the other hand, we have
\[
I_{4,2}(t)=\frac{1}{2\sqrt{\pi}}\int_{B(x,t)}f(y)\int_0^\infty \frac{e^{-\frac{(x-y)^2+t^2}{4u}}}{u^{3/2}}du=\int_{B(x,t)}\frac{f(y)}{((x-y)^2+t^2)^{1/2}}dy,\quad t\in (0,1).
\]
According to \cite[(4.50)]{ChHW} we get
\[
\lim_{t\rightarrow 0^+}I_{4,2}(t)=\widetilde{q}f(x),
\]
where $\widetilde{q}=2\int_0^1(1+t)^{-1/2}t^{-1/2}dt=4\log(1 + \sqrt{2})$.

For $I_{4,1}$, using \eqref{eq: heat Bessel FeynmanKac} and \eqref{eq: heat Bessel comparison t<xy} as above, we obtain
\begin{align*}
    |I_{4,1}(t)|&\leq \int_{B(x,t)}|f(y)|\left(\int_0^{\min\{xy,1\}}\frac{e^{-\frac{t^2}{4u}}}{u}|W_u^\lambda (x,y)-W_u(x-y)|du\right.\\
    &\quad +\left.\mathcal X_{(0,1/x)}(y)\int_{xy}^1\frac{e^{-\frac{t^2}{4u}}}{u}|W_u^\lambda (x,y)-W_u(x-y)|du\right)dy\\
    &\leq C\int_{B(x,t)}|f(y)|\left(\frac{1}{xy}\int_0^{\min\{xy,1\}}\frac{du}{u^{1/2}}+\mathcal X_{(0,1/x)}(y)\int_{xy}^\infty \frac{du}{u^{3/2}}\right)\\
    &\leq C\int_{B(x,t)}\frac{|f(y)|}{\sqrt{xy}}dy,\quad t\in (0,1).
\end{align*}
Then, since $f\in L^1_{\rm loc}(0,\infty)$, we deduce that
\[
\lim_{t\rightarrow 0^+}I_{4,1}(t)=0.
\]
Putting together the above limit results, we obtain
\begin{equation}\label{L3}
\lim_{t\rightarrow 0^+}I_4(t)=\widetilde{q}f(x).
\end{equation}

Combining \eqref{L0}--\eqref{L3} we conclude that
\begin{align*}
\lim_{t\rightarrow 0^+}(u_f^\lambda (x,t)+2f(x)\log t)
&=\int_{B(x,1)}(f(y)-f(x))\int_0^\infty \frac{W_u^\lambda (x,y)}{u}dudy\\
&\hspace{-3cm}+\int_{(0,\infty)\setminus B(x,1)}f(y)\int_0^\infty \frac{W_u^\lambda (x,y)}{u}dudy\\
&\hspace{-3cm}+f(x)\left(\int_{B(x,1)}\int_1^\infty \frac{W_u^\lambda (x,y)}{u}dudy-\int_{B(x,1)}\int_1^\infty \frac{W_u(x-y)}{u}dudy\right.\\
&\hspace{-3cm}\left.+\int_{B(x,1)}\int_0^1\frac{W_u^\lambda (x,y)-W_u(x-y)}{u}dudy+q+\widetilde{q}+\mathcal X_{(0,1)}(x)\log x\right).
\end{align*}
Here $q+\tilde{q}=2\log(2(1+\sqrt{2}))$.

Thus, we get
\begin{align*}
(\log B_\lambda)(f)(x)&=-\lim_{t\rightarrow 0^+}(u_f^\lambda (x,t)+2f(x)\log t)\\
&\quad +f(x)\Bigg(\int_{(0,\infty)\setminus B(x,1)}\int_0^1\frac{W_u(x-y)}{u}dudy-\int_{B(x,1)}\int_1^\infty \frac{W_u (x-y)}{u}dudy\\
&\quad -\gamma+q+\widetilde{q}+\mathcal X_{(0,1)}(x)\log x\Bigg)\\
&=-\lim_{t\rightarrow 0^+}(u_f^\lambda (x,t)+2f(x)\log t)\\
&\quad +f(x)\Big(\int_{(0,\infty)\setminus B(x,1)}\int_0^1\frac{W_u(x-y)}{u}dudy-\int_{B(x,1)}\int_1^\infty \frac{W_u (x-y)}{u}dudy\\
&\quad +2\log(2(1+\sqrt{2}))-\gamma+\mathcal X_{(0,1)}(x)\log(x)\Big).
\end{align*}

We are going to see that, for every $\varphi \in C_c^\infty(0,\infty)$,
\begin{equation}\label{distr}
\int_0^\infty f(x)(\log B_\lambda)\varphi (x)dx=\int_0^\infty \mathcal K(x)f(x)\varphi (x)dx-\lim_{t\rightarrow 0^+}\int_0^\infty (u_f^\lambda (x,t)+2f(x)\log t)\varphi (x)dx.
\end{equation}
Actually we will see that (\ref{distr}) holds when $\varphi$ is uniformly Dini continuous with compact support in $(0,\infty)$. Assume that $\varphi$ is uniformly Dini continuous with compact support in $(0,\infty)$. Let $t>0$. Using \eqref{eq: first estimate uflambda} we have that 
\begin{align*}
\int_0^\infty |\varphi (x)|\int_0^\infty |f(y)|&\int_0^\infty W_u^\lambda (x,y)\frac{e^{-\frac{t^2}{4u}}}{u}dudydx\\
&\leq C\int_{{\rm supp }\,\varphi}\left(\frac{1+x}{t}+\frac{1}{\min\{1,t\}}\right)\varphi (x)|dx\int_0^\infty \frac{|f(y)|}{1+y}dy<\infty.
\end{align*}
Then,
\[
\int_0^\infty u_f^\lambda (x,t)\varphi (x)dx=\int_0^\infty f(x)u_\varphi ^\lambda (x,t)dx,\quad t\in (0,\infty).
\]
We also have that
\[
(\log B_\lambda)(\varphi )(x)=-\lim_{t\rightarrow 0^+}(u_\varphi ^\lambda (x,t)+2f(x)\log t)+\varphi (x)\mathcal K(x),\quad x\in (0,\infty).
\]

Suppose $R>1$ and $\varphi (x)=0$, $x\in (R,\infty)$. Assume that $x>2R$. We get, using \eqref{eq: heat Bessel FeynmanKac}
\begin{align*}
|u_\varphi^\lambda (x,t)+2\varphi (x)\log t|=|u_\varphi^\lambda (x,t)|&\leq C\int_0^R|\varphi(y)|\int_0^\infty \frac{e^{-c\frac{(x-y)^2+t^2}{u}}}{u^{3/2}}dudy\\
&\leq C\int_0^R\frac{|\varphi (y)|}{((x-y)^2+t^2)^{1/2}}dy\leq C\int_0^R\frac{|\varphi (y)|}{|x-y|}dy\\
&\leq \frac{C}{x}\leq \frac{C}{1+x},\quad t\in (0,1).
\end{align*}

On the other hand, since $\varphi$ is uniformly Dini continuous in $(0,\infty)$ and there exists $0<a<R$ such that $\varphi (x)=0$, $x\in (0,a)$, we can see by proceeding as above that
\[
|u_\varphi ^\lambda (x,t)+2\varphi (x)\log t|\leq C,\quad x\in (0,2R)\mbox{ and }t\in (0,1).
\]
Then, 
\[
|u_\varphi ^\lambda (x,t)+2\varphi (x)\log t|\leq \frac{C}{1+x},\quad x\in (0,2R)\mbox{ and }t\in (0,1).
\]
By applying dominated convergence theorem we deduce that
\begin{align*}
    \lim_{t\rightarrow 0^+}\int_0^\infty (u_f(x,t)+2f(x)\log t)\varphi (x)dx&=\lim_{t\rightarrow 0^+}\int_0^\infty(u_\varphi ^\lambda (x,t)+\varphi (x)\log t)f(x)dx\\
    &=\int_0^\infty \lim_{t\rightarrow 0^+}(u_\varphi ^\lambda (x,t)+\varphi (x)\log t)f(x)dx\\
    &=-\int_0^\infty f(x)(\log B_\lambda)\varphi (x)dx+\int_0^\infty \mathcal K(x)f(x)\varphi (x)dx.
\end{align*}
Thus the proof is finished.

\section*{Statements and Declarations}

\subsection*{Authors' contributions} All authors contributed equally to the conceptualization, methodology, formal analysis, writing and review of this article.

\subsection*{Declaration of competing interest} The authors declare that they have no known competing financial interests or personal relationships that could have appeared to influence the work reported in this paper.

\subsection*{Funding} The authors are partially supported by the Grant PID2023-148028NB-I00 funded by MICIU/AEI/10.13039/501100011033 and by ERDF/EU.

\subsection*{Data availability} No data was used for the research described in the article.


\bibliographystyle{acm}
\def\cprime{$'$} \def\ocirc#1{\ifmmode\setbox0=\hbox{$#1$}\dimen0=\ht0 \advance\dimen0 by1pt\rlap{\hbox to\wd0{\hss\raise\dimen0 \hbox{\hskip.2em$\scriptscriptstyle\circ$}\hss}}#1\else {\accent"17 #1}\fi}

\end{document}